\numberwithin{equation}{section}
\numberwithin{figure}{section}
\theoremstyle{plain}
\newtheorem*{thm*}{Theorem}
\theoremstyle{plain}
\newtheorem{thm}{Theorem}[section]
\newtheorem{lem}[thm]{Lemma}
\newtheorem{conj}[thm]{Conjecture}
\newtheorem*{cor}{Corollary}
\theoremstyle{definition}
\newtheorem{defn}[thm]{Definition}
\newtheorem*{defn*}{Definition}
	\newtheorem{exmp}[thm]{Example}
\newtheorem{rem}[thm]{Remark}
\tikzset{
  big arrow/.style={
    decoration={markings,mark=at position 1 with {\arrow[scale=1.5,#1]{>}}},
    postaction={decorate},
    shorten >=0.4pt},
  big arrow/.default=black}
\numberwithin{equation}{section}
\begin{document}

\title{Euler characteristics of crepant resolutions of Weierstrass  models  }
\authors{{ Mboyo Esole$^{\spadesuit}$, Patrick Jefferson$^\diamondsuit$, Monica Jinwoo Kang$^\diamondsuit$}\\
\vspace{1cm}
$^{\spadesuit}$ Department of Mathematics, Northeastern University\\
 Boston, MA 02115, USA\\
\vspace{.3cm}
$^\diamondsuit$ Department of Physics, Jefferson Physical Laboratory\\
 Harvard University, Cambridge, MA 02138, U.S.A.
}

\date{}
\abstract{

Based on an identity of Jacobi, we prove a simple formula that computes the pushforward of analytic functions of the exceptional divisor of a blowup of a projective variety along a smooth complete intersection with normal crossing. We  use  this pushforward formula to derive generating functions for Euler characteristics of crepant resolutions of singular Weierstrass models given by Tate's algorithm.  Since  the Euler characteristic depends only on the sequence of blowups and not on the Kodaira fiber itself, several distinct Tate models have the same Euler characteristic. In the case of elliptic Calabi-Yau threefolds,  using the Shioda--Tate--Wazir theorem, we also compute the Hodge numbers. For elliptically fibered Calabi-Yau fourfolds, our results also prove a conjecture of Blumenhagen, Grimm, Jurke, and Weigand based on F-theory/heterotic string duality.
}

\maketitle
\tableofcontents

\newpage
\section{Introduction}

The study of crepant resolutions of Weierstrass models, their fibral structure, and their flop transitions is an area of common interest to algebraic geometers, number theorists, and string theorists \cite{EY,ESY1,ESY2,ES, Hayashi:2014kca,FH2,Szydlo.Thesis}. The theory of elliptic surfaces has its beginnings in the 1960s, and was advanced largely by the contributions of mathematicians such as Kodaira \cite{Kodaira.IIandIII}; N\'eron \cite{Neron};  Mumford and Suominen \cite{MumfordSuominen}, Deligne \cite{Formulaire}, and Tate \cite{Tate}. Miranda studied the desingularization of elliptic threefolds and the phenomenon of collisions of singularities in \cite{Miranda.smooth}, and Szydlo subsequently generalized Miranda's work to elliptic $n$-folds \cite{Szydlo.Thesis}; the Picard number (i.e., the rank of the N\'eron-Severi group) of an elliptic fibration can be obtained using the Shioda--Tate--Wazir theorem \cite{Wazir};
the study of elliptic fibrations having the same Jacobian was developed by Dolgachev and Gross \cite{MR1242006}; and Nakayama studied local and global  properties of Weierstrass models over bases of arbitrary dimension in \cite{Nakayama.Local, Nakayama.Global}.
 Furthermore, more recent developments have been inspired by string theory (in particular, M-theory and F-theory) constructions that ascribe an interesting physical meaning to various topological and geometric properties of elliptically-fibered Calabi-Yau varieties \cite{Vafa:1996xn,Morrison:1996na,Morrison:1996pp,Bershadsky:1996nh,IMS,Denef:2008wq}. 

 A Weierstrass model provides a convenient  framework for computing the discriminant, the $j$-invariant, and  the Mordell--Weil group of an elliptic fibration. Weierstrass models are also the setting in which Tate's algorithm is defined \cite{Tate}. 
 Any elliptic fibration over a smooth base is birational to a (potentially singular) Weierstrass model \cite{Formulaire}. 
Since a Weierstrass model is a hypersurface, it is Gorenstein \cite[Corollary 21.19]{MR1322960}, and hence its canonical class is well-defined as a Cartier divisor.

In practice, it is often necessary to regularize the singularities of Weierstrass models when computing, for example, their topological invariants.
 Among the possible regularizations of a singular variety, crepant resolutions are particularly desirable as, by definition, they preserve the canonical class  and the smooth locus of the variety. In a sense, crepant resolutions  modify the variety as mildly as possible  while  regularizing its singularities. Surfaces with canonical singularities always have a  crepant resolution, which is unique up to isomorphism. However, for varieties of  dimension three or higher, crepant resolutions do not necessarily exist, and when they do, they may not be unique. Distinct crepant resolutions of the same Weierstrass model are connected by a network of flops. 
\begin{exmp}
  The  quadric cone over a conic surface $V(x_1 x_2 -x_3 x_4)\subset \mathbb{C}^4$ has two crepant resolutions related by an Atiyah flop.   By contrast, the quadric cone $V(x_1^2+x_2^2+x_3^2+x_4^2 + x_5^2) \subset \mathbb{C}^5$ does not have a crepant resolution since it has $\mathbb{Q}$-factorial terminal singularities. 
The binomial variety $V(x_1 x_2 - u_1 u_2 u_3)\subset \mathbb{C}^5$ has six crepant resolutions whose network of flops forms a hexagon \cite{EY}. For additional examples of flops involving Weierstrass models, see \cite{ESY1,ESY2,G2,MP}.
\end{exmp}
There is an important subset of singular Weierstrass models that have crepant resolutions and play a central role in string geometry, as they are instrumental in the geometric engineering of gauge theories in F-theory and M-theory. 
  We refer to them as $G$-models, they are defined in  \S \ref{Sec:G.Models} and are typically obtained  by the Weierstrass models that appear as outputs of  Tate's algorithm \cite{Tate,Bershadsky:1996nh,Katz:2011qp}. 
 The networks of crepant resolutions of these Weierstrass models are conjectured to be isomorphic to the incidence graph of the chambers of a hyperplane arrangement \cite{IMS,Hayashi:2014kca,EJJN1,EJJN2}. 

The number of distinct resolutions associated to a  $G$-model  can be  rather large \cite{EJJN1,EJJN2,Hayashi:2014kca}. It is interesting to study topological invariants that do not depend on the choice of a crepant resolution. An example of such a topological invariant is the Euler characteristic---using $p$-adic integration and  Weil conjecture, Batyrev proved that the Betti numbers of smooth varieties connected by a crepant birational map are the same \cite{Batyrev.Betti}, 
and it therefore follows that the Euler characteristics of any two crepant resolutions are the same.

The purpose of this paper is to compute the Euler  characteristics of  $G$-models obtained by crepant resolutions of Weierstrass models, where $G$ is a simple group.  Following \cite{AE1,AE2},  we allow the base to be of arbitrary dimension and we do not impose the Calabi-Yau condition.
 We work relative to a base that we leave arbitrary.   In this sense, our paper is a direct generalization of the work of Fullwood and van Hoeij  on stringy invariants of Weierstrass models \cite{FH2}.

The Euler characteristic of an elliptic fibration plays a central role in many physical problems such as the computation of gravitational anomalies of six dimensional supergravity theories \cite{GM1,Park:2011ji}  and the cancellation of tadpoles in four dimensional theories \cite{Sethi:1996es,AE1,AE2,BGJW,CDE,EFY,EKY}. 
Unfortunately, the Euler characteristics of crepant resolutions of Weierstrass models are generally not known, although they have been computed in some special cases for Calabi-Yau threefolds and fourfolds  \cite{Andreas:1999ng,Andreas:2009uf,Marsano,FH2}. 
 For instance, the Euler characteristics of  $G$-models for Calabi-Yau threefolds were studied in \cite{GM1}, and there are conjectures for the  Euler  characteristics  of $G$-models for  Calabi-Yau fourfolds based on heterotic string theory/F-theory duality \cite{BGJW}.

  As a byproduct of our results, we prove a conjecture by Blumenhagen, Grimm, Jurke, and Weigand  \cite{BGJW} on the Euler  characteristics  of Calabi-Yau fourfolds which are $G$-models for $G=$  SU$(2)$, SU($3$), SU($4$), 
  SU($5$), E$_6$, E$_7$ or E$_8$. These groups correspond to the  {\em exceptional series} E$_k$ defined on page \pageref{fn:exceptional}  with the exception of D$_5$.
 In \cite{BGJW}, the authors  conjecture the value of  the Euler characteristic using a method inspired by heterotic string theory/F-theory duality. 
 The results of our computation match their prediction precisely,  except for the  limiting case of the group $E_8$.
We also retrieve  known results for the case of $G$-models that are Calabi-Yau threefolds \cite{GM1}, while removing most of the assumptions of \cite{GM1}.

A crucial ingredient of our results is Theorem \ref{Thm:Push}, which is a  pushforward formula for any analytic function of the class of the exceptional divisor of a blowup of a nonsingular variety along a smooth complete intersection of hypersurfaces meeting transversally. 
Theorem   \ref{Thm:Push} is  a generalization to arbitrary analytic functions  of a result of Fullwood and van Hoeij \cite[Lemma 2.2 ]{FH2}, which relies on a theorem of Aluffi \cite{Aluffi_CBU} simplifying the classic formula of Porteous on Chern classes of the tangent bundle of a  blowup \cite{Porteous}. 
Theorem \ref{Thm:Push} profoundly simplifies the algebraic manipulations necessary to compute pushforwards, and  therefore has a large range of  applications  independently of the specific applications discussed in this paper.

For the reader's convenience, we provide tables specializing our results to the cases of 
elliptic threefolds and fourfolds, and further to the cases of Calabi-Yau threefolds and fourfolds, including an explicit computation of the Hodge numbers in the Calabi-Yau threefold case. 
We emphasize that our results are insensitive to the particular choice of  a crepant resolution due to Batyrev's theorem on the Betti numbers of crepant birational equivalent varieties \cite{Batyrev.Betti} and Kontsevich's theorem on the Hodge numbers of birational equivalent Calabi-Yau varieties \cite{Kontsevich.Orsay}.       

\subsection{Conventions} \label{Sec:Conv}
Throughout this paper, we work over the field of complex numbers. 
A variety is a reduced and irreducible algebraic scheme. 
 We denote the vanishing locus of the sections $f_1, \ldots, f_n$ by $V(f_1, \ldots, f_n)$. 
  The tangent bundle of a variety $X$ is denoted by $TX$ and the  normal bundle of a subvariety $Z$ of a  variety $X$ is denoted by  $N_Z X$. 
 Let $\mathscr{V}\rightarrow B$ be a vector bundle over a variety $B$. We denote the by $\mathbb{P}(\mathscr{V})$ the projective bundle of lines in  $\mathscr{V}$.
We use Weierstrass models defined with respect to the projective bundle $\pi : X_0 = \mathbb P[\mathscr O_B \oplus \mathscr L^{\otimes 2} \oplus \mathscr L^{\otimes 3} ] \rightarrow B$  where $\mathscr{L}$ is a line bundle of $B$. 
We denote the pullback of $\mathscr{L}$ with respect to $\pi$ by $\pi^* \mathscr{L}$. 
We denote by $\mathscr{O}_{X_0} (1)$  the  canonical line bundle on $X_0$, i.e., the  dual of the tautological line bundle of $X_0$  (see \cite[Appendix B.5]{Fulton.Intersection}).
The first Chern class of  $\mathscr{O}_{X_0} (1)$ is denoted $H$ and the first Chern class of $\mathscr{L}$ is denoted $L$. 
 The Weierstrass model $\varphi : Y_0 \rightarrow B$ is defined as  the zero-scheme of a section of $\mathscr{O}_{X_0} (3) \otimes \pi^* \mathscr L^{\otimes 6}$---Weierstrass models are studied in more detail in \S   \ref{sec:Wmodel}.
The Chow group $A_*(X)$ of a nonsingular variety $X$ is the group of divisors modulo rational equivalence \cite[Chap. 1,\S 1.3]{Fulton.Intersection}.
We use $[V]$ to refer to the class of a subvariety $V$ in $A_*(X)$.
 Given a class $\alpha \in A_*(X)$, the degree of $\alpha$ is denoted $\int_X \alpha$ (or simply $\int \alpha$ if $X$ is clear from the context.) Only the zero component of $\alpha$ is relevant in computing $\int_X \alpha$---see \cite[Definition 1.4, p. 13]{Fulton.Intersection}. We use $c(X)=c(TX)\cap [X]$ to refer to the total homological Chern class of a nonsingular variety $X$, and likewise we use $c_i(TX)$ to denote the $i$th Chern class of the tangent bundle $TX$.  
  Given   two varieties $X, Y$ and a proper morphism $f: X \rightarrow Y$, the  proper pushforward associated to 
$f$ is denoted $f_*$.  If $g: X\rightarrow Y$ is a flat morphism, the pullback of $g$ is denoted $g^*$ and by  definition $g^*[V]=[g^{-1} (V)]$, see  \cite[Chap 1, \S 1.7]{Fulton.Intersection}.
Given a formal series $Q(t)= \sum_{i=0}^\infty Q_i t^i$, we define $[t^n] Q(t)=Q_n$.

Our conventions for affine Dynkin diagrams are as follows. A projective Dynkin diagram is denoted ${M}_k$ where $M$ is $A$, $B$, $C$, $D$, $E$, $F$, or $G$, and $k$ is the number of nodes.
  An affine Dynkin diagram that becomes a projective Dynkin diagram $\mathfrak{g}$ after removing a node of multiplicity one is denoted $\widetilde{\mathfrak{g}}$.  We denote by $\widetilde{\mathfrak{g}}^t$ the 
    (the possibly twisted) affine Dynkin diagram whose  Cartan matrix  is the transpose of the Cartan matrix of $\widetilde{\mathfrak{g}}$.
The graph of   $\widetilde{\mathfrak{g}}^t$ is   obtained by exchanging the directions of all the arrows of $\widetilde{\mathfrak{g}}$.  
When the extra node is removed, the dual graph of  $\widetilde{\mathfrak{g}}^t$ reduces to the dual graph of the Langlands dual of $\mathfrak{g}$. 
      The affine Dynkin diagrams $\widetilde{\mathfrak{g}}^t$ and $\widetilde{\mathfrak{g}}$ are distinct only when $\mathfrak{g}$ is not simply laced (i.e., when $\mathfrak{g}$ is G$_2$, F$_4$, B$_k$,  or  C$_k$).
The notation $\widetilde{\mathfrak{g}}^t$ follows Carter\footnote{There is a  typo on page 570 of \cite{Carter} in the first Dynkin diagram of  $ \widetilde{\text{B}}_{\ell}$ on the top of the page, where the arrow is in the wrong direction but correctly oriented in the rest of the page. }  \cite[Appendix, p. 540-609]{Carter} and is equivalent to the notation $\widetilde{\mathfrak{g}}^\vee$ used by MacDonald in \S 5  of \cite{MR0357528}. 
 The multiplicities define a  zero vector  of the extended Cartan matrix.  In the notation of Kac \cite{Kac},  
$ \widetilde{\text{B}}_{\ell}^t$ ($\ell\geq 3$), $ \widetilde{\text{C}}_{\ell}^t $ ($\ell\geq 2$), $ \widetilde{\text{G}}_{2}^t$, and  $\widetilde{\text{F}}_{4}^t$ are respectively denoted 
  ${\text{A}}_{2\ell-1}^{(2)}$, ${\text{D}}_{\ell+1}^{(2)}$, ${\text{D}}_{4}^{(3)}$, and  ${\text{E}}_{6}^{(2)}$; while 
$ \widetilde{\text{B}}_{\ell}$ ($\ell\geq 3$), $ \widetilde{\text{C}}_{\ell} $ ($\ell\geq 2$), $ \widetilde{\text{G}}_{2}$, and  $\widetilde{\text{F}}_{4}$ are respectively denoted 
  ${\text{B}}_{\ell}^{(1)}$, ${\text{C}}_{\ell}^{(1)}$, $ {\text{G}}_{2}^{(1)}$, and  ${\text{F}}_{4}^{(1)}$. When $\mathfrak{g}$ is non-simply laced, the affine Dynkin diagrams $\widetilde{\mathfrak{g}}^t$ and $\widetilde{\mathfrak{g}}$ differ from each  by the directions of their  arrows and also by 
  the multiplicities of their nodes (see Figure \ref{Fig:AffineLieAlgebras}).

Given a complete intersection $Z$ of hypersurfaces  $Z_i=V(z_i)$ in a variety $X$, we denote the blowup $\widetilde{X}=\text{Bl}_Z X$ of $X$ along $Z$  with exceptional divisor $E=V(e)$ as 
$$
\begin{tikzpicture}
	\node(X0) at (0,0){${X}$};
	\node(X1) at (2.5,0){$\widetilde{X}.$};
	\draw[big arrow] (X1) -- node[above,midway]{$(z_1,\ldots,z_n|e)$} (X0);	
 \end{tikzpicture}
$$
\begin{figure}[htb]
\begin{center}
 \scalebox{1}{$			
\begin{array}{r l  l l }

												   \begin{array}{c}
												  \\
						 \widetilde{\text{B}}_{3+\ell}^t  \\
						 \\
						 \end{array}
						 &\scalebox{.9}{$\begin{array}{c} \begin{tikzpicture}
				\node[draw,circle,thick,scale=1.25,fill=black,label=above:{1}] (1) at (-.1,.7){};
				\node[draw,circle,thick,scale=1.25,label=above:{1}] (2) at (-.1,-.7){};	
				\node[draw,circle,thick,scale=1.25,label=above:{2}] (3) at (1,0){};
				\node[draw,circle,thick,scale=1.25,label=above:{2}] (4) at (2.1,0){};
				\node[draw,circle,thick,scale=1.25,label=above:{2}] (5) at (3.3,0){};	
				\node[draw,circle,thick,scale=1.25,label=above:{1}] (6) at (4.6,0){};	
				\draw[thick] (1) to (3) to (4);
				\draw[thick] (2) to (3);
				\draw[ultra thick, loosely dotted] (4) to (5) {};
				\draw[thick] (3.5,-0.05) --++ (.9,0){};
				\draw[thick] (3.5,+0.05) --++ (.9,0){};
				\draw[thick]
					(3.9,0) --++ (60:.25)
					(3.9,0) --++ (-60:.25);
			\end{tikzpicture}\end{array}$}		&

\scalebox{.9}{$\begin{array}{c} \begin{tikzpicture}
				\node[draw,circle,thick,scale=1.25,fill=black,label=above:{1}] (1) at (-.1,.7){};
				\node[draw,circle,thick,scale=1.25,label=above:{1}] (2) at (-.1,-.7){};	
				\node[draw,circle,thick,scale=1.25,label=above:{2}] (3) at (1,0){};
				\node[draw,circle,thick,scale=1.25,label=above:{2}] (4) at (2.1,0){};
				\node[draw,circle,thick,scale=1.25,label=above:{2}] (5) at (3.3,0){};	
				\node[draw,circle,thick,scale=1.25,label=above:{2}] (6) at (4.6,0){};	
				\draw[thick] (1) to (3) to (4);
				\draw[thick] (2) to (3);
				\draw[ultra thick, loosely dotted] (4) to (5) {};
				\draw[thick] (3.5,-0.05) --++ (.9,0){};
				\draw[thick] (3.5,+0.05) --++ (.9,0){};
				\draw[thick]
					(4.1,0) --++ (120:.25)
					(4.1,0) --++ (-120:.25);
			\end{tikzpicture}\end{array}$}				
			&
			  \begin{array}{c}
												  \\
						 \widetilde{\text{B}}_{3+\ell}  \\
						 \\
						 \end{array}
			\\
			
 \widetilde{\text{C}}_{2+\ell}^t  
						   &
			\scalebox{.9}{$\begin{array}{c} \begin{tikzpicture}
				\node[draw,circle,thick,scale=1.25,fill=black,label=above:{1}] (1) at (-.2,0){};
				\node[draw,circle,thick,scale=1.25,label=above:{1}] (3) at (.8,0){};
				\node[draw,circle,thick,scale=1.25,label=above:{1}] (4) at (1.8,0){};
				\node[draw,circle,thick,scale=1.25,label=above:{1}] (5) at (2.8,0){};	
				\node[draw,circle,thick,scale=1.25,label=above:{1}] (6) at (3.8,0){};	
				\node[draw,circle,thick,scale=1.25,label=above:{1}] (7) at (4.8,0){};	
				\draw[thick]   (3) to (4);
				\draw[thick] (5) to (6);
				\draw[ultra thick, loosely dotted] (4) to (5) {};
				\draw[thick] (4.,-0.05) --++ (.6,0){};
				\draw[thick] (4.,+0.05) --++ (.6,0){};
								\draw[thick] (-.2,-0.05) --++ (.8,0){};
				\draw[thick] (-.2,+0.05) --++ (.8,0){};

				\draw[thick]
					(4.4,0) --++ (-120:.25)
					(4.4,0) --++ (120:.25);
					\draw[thick]
					(0.2,0) --++ (-60:.25)
					(0.2,0) --++ (60:.25);
			\end{tikzpicture}\end{array}$}
& 
 \scalebox{.9}{$\begin{array}{c} \begin{tikzpicture}
				\node[draw,circle,thick,scale=1.25,fill=black,label=above:{1}] (1) at (-.2,0){};
				\node[draw,circle,thick,scale=1.25,label=above:{2}] (3) at (.8,0){};
				\node[draw,circle,thick,scale=1.25,label=above:{2}] (4) at (1.8,0){};
				\node[draw,circle,thick,scale=1.25,label=above:{2}] (5) at (2.8,0){};	
				\node[draw,circle,thick,scale=1.25,label=above:{2}] (6) at (3.8,0){};	
				\node[draw,circle,thick,scale=1.25,label=above:{1}] (7) at (4.8,0){};	
				\draw[thick]   (3) to (4);
				\draw[thick] (5) to (6);
				\draw[ultra thick, loosely dotted] (4) to (5) {};
				\draw[thick] (4.,-0.05) --++ (.6,0){};
				\draw[thick] (4.,+0.05) --++ (.6,0){};
				\draw[thick] (-.2,-0.05) --++ (.8,0){};
				\draw[thick] (-.2,+0.05) --++ (.8,0){};

				\draw[thick]
					(4.2,0) --++ (-60:.25)
					(4.2,0) --++ (60:.25);
					\draw[thick]
					(0.4,0) --++ (-120:.25)
					(0.4,0) --++ (120:.25);
			\end{tikzpicture}\end{array}$}

			&
			  \begin{array}{c}
												  \\
						 \widetilde{\text{C}}_{2+\ell}  \\
						 \\
						 \end{array}

			\\
\widetilde{\text{F}}_4^t 
&			

\scalebox{.93}{$\begin{array}{c}\begin{tikzpicture}
				\node[draw,circle,thick,scale=1.25,fill=black,label=above:{1}] (1) at (0,0){};
				\node[draw,circle,thick,scale=1.25,label=above:{2}] (2) at (1,0){};
				\node[draw,circle,thick,scale=1.25,label=above:{3}] (3) at (2,0){};
				\node[draw,circle,thick,scale=1.25,label=above:{2}] (4) at (3,0){};
				\node[draw,circle,thick,scale=1.25,label=above:{1}] (5) at (4,0){};
				\draw[thick] (1) to (2) to (3);
				\draw[thick]  (4) to (5);
				\draw[thick] (2.2,0.05) --++ (.6,0);
				\draw[thick] (2.2,-0.05) --++ (.6,0);
				\draw[thick]
					(2.4,0) --++ (60:.25)
					(2.4,0) --++ (-60:.25);
			\end{tikzpicture}\end{array}$} & 
			\scalebox{.93}{$\begin{array}{c}\begin{tikzpicture}
				\node[draw,circle,thick,scale=1.25,fill=black,label=above:{1}] (1) at (0,0){};
				\node[draw,circle,thick,scale=1.25,label=above:{2}] (2) at (1,0){};
				\node[draw,circle,thick,scale=1.25,label=above:{3}] (3) at (2,0){};
				\node[draw,circle,thick,scale=1.25,label=above:{4}] (4) at (3,0){};
				\node[draw,circle,thick,scale=1.25,label=above:{2}] (5) at (4,0){};
				\draw[thick] (1) to (2) to (3);
				\draw[thick]  (4) to (5);
				\draw[thick] (2.2,0.05) --++ (.6,0);
				\draw[thick] (2.2,-0.05) --++ (.6,0);
				\draw[thick]
					(2.6,0) --++ (120:.25)
					(2.6,0) --++ (-120:.25);
			\end{tikzpicture}\end{array}$}
				&
				  \begin{array}{c}
												  \\
						 \widetilde{\text{F}}_{4}  \\
						 \\
						 \end{array}
				\\

			\widetilde{\text{G}}_2^t

 &			\scalebox{.93}{$\begin{array}{c}\begin{tikzpicture}
				\node[draw,circle,thick,scale=1.25,label=above:{1}, fill=black] (1) at (0,0){};
				\node[draw,circle,thick,scale=1.25,label=above:{2}] (2) at (1.3,0){};
				\node[draw,circle,thick,scale=1.25,label=above:{1}] (3) at (2.6,0){};
				\draw[thick] (1) to (2);
				\draw[thick] (1.5,0.09) --++ (.9,0);
				\draw[thick] (1.5,-0.09) --++ (.9,0);
				\draw[thick] (1.5,0) --++ (.9,0);
				\draw[thick]
					(1.9,0) --++ (60:.25)
					(1.9,0) --++ (-60:.25);
			\end{tikzpicture}\end{array}
		$} 		& 
		\scalebox{.93}{$\begin{array}{c}\begin{tikzpicture}
				\node[draw,circle,thick,scale=1.25,label=above:{1}, fill=black] (1) at (0,0){};
				\node[draw,circle,thick,scale=1.25,label=above:{2}] (2) at (1.3,0){};
				\node[draw,circle,thick,scale=1.25,label=above:{3}] (3) at (2.6,0){};
				\draw[thick] (1) to (2);
				\draw[thick] (1.5,0.09) --++ (.9,0);
				\draw[thick] (1.5,-0.09) --++ (.9,0);
				\draw[thick] (1.5,0) --++ (.9,0);
				\draw[thick]
					(2,0) --++ (120:.25)
					(2,0) --++ (-120:.25);
			\end{tikzpicture}\end{array}
		$} 
&

						 \widetilde{\text{G}}_{2}  				\\\end{array}$}
										\end{center}	
												 \caption{ Twisted affine Lie algebras vs affine Lie algebras for non-simply laced cases.
												 Only those on the left appears in the theory of elliptic fibrations as dual graphs of  the fiber over the generic point of an irreducible component of the discriminant locus. 
												  \label{Fig:AffineLieAlgebras}}
						  \end{figure}
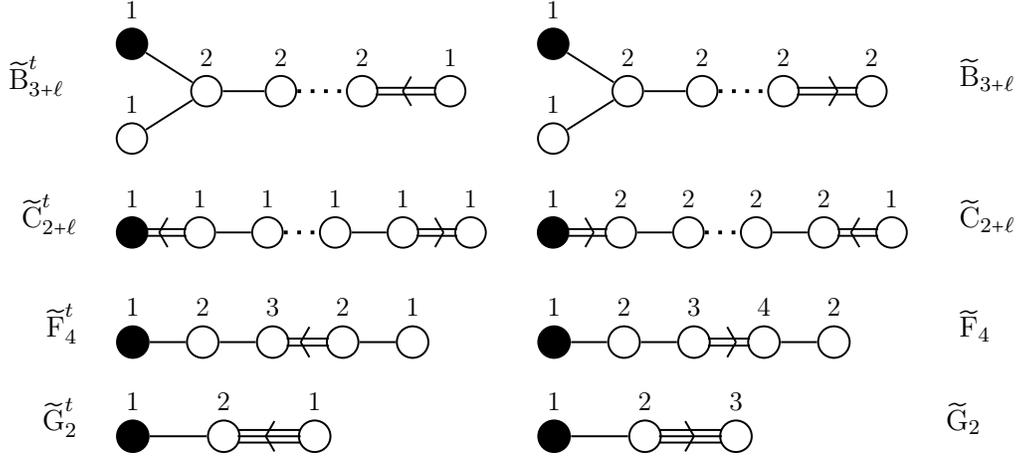
\subsection{$G$-models} \label{Sec:G.Models}
In this section, we recall how a Lie group is naturally associated with an elliptic fibration and introduce the notion of a $G$-model. 
 Our notation for dual graphs and Kodaira fibers is spelled out in \S \ref{Sec:Conv}, and Tables \ref{Table:Affine} and \ref{Table:DualGraph}. 
 See also  Appendix \ref{Sec:BasicNotions} for the definitions of a \emph{fiber type}, a \emph{generic fiber}, and a \emph{geometric generic fiber}. 
 \begin{defn}[$\mathcal{K}$-model]
 Let $\mathcal{ K}$ be the type of a generic fiber. Let $S\subset B$ be a smooth divisor of a projective variety $B$. 
 An elliptic fibration  $\varphi:Y\longrightarrow B$ over $B$  is said to be  a  \emph{$\mathcal{K}$-model} if
 \begin{enumerate}
\item The discriminant locus $\Delta(\varphi)$ contains as an irreducible component the divisor  $S\subset B$. 
\item  The generic fiber over $S$ is of type $\mathcal{ K}$.
\item Any other  fiber away from $S$ is irreducible. 
\end{enumerate}
 If the dual graph of $\mathcal{ K}$ corresponds to an affine Dynkin diagram of type $\widetilde{\mathfrak{g}}^t$, where $\mathfrak{g}$ is a Lie algebra, then the $\mathcal{K}$-model   is also called a \emph{$\mathfrak{g}$-model}. 
 \end{defn}

 In F-theory, a Lie group $G(\varphi)$ attached to a given elliptic fibration $\varphi: Y\longrightarrow B$ depends on the type of generic singular fibers and the Mordell-Weil group MW$(\varphi)$ of the elliptic fibration \cite{deBoer:2001wca}.  
  The Lie algebra $\mathfrak{g}$ associated to the elliptic fibration is then the Langlands dual $\mathfrak{g}^\vee= \bigoplus_i \mathfrak{g}_i^\vee$ of $\mathfrak{g}= \bigoplus_i \mathfrak{g}_i$. 
  If we denote by $\exp (\mathfrak{g}^\vee)$ the  unique (up to isomorphism) simply connected compact simple group  whose Lie algebra is $\mathfrak{g}^\vee$, then the group associated to the elliptic fibration $\varphi:Y\longrightarrow B$ is: 
\begin{equation}\nonumber
G (\varphi):= \frac{\exp (\mathfrak{g}^\vee)}{\text{MW}_{\text{tor}}(\varphi)} \times U(1)^{\mathrm{rk}\; \text{MW}(\varphi)},
\end{equation}
where $\mathrm{rk}\,\text{MW}(\varphi)$ is the rank of the Mordell-Weil group of $\varphi$ and MW$_{\text{tor}}(\varphi)$ is the torsion subgroup of the Mordell-Weil group of $\varphi$. 
Defining  properly the quotient of $\exp (\mathfrak{g}^\vee)$ by the Mordell--Weil group requires a choice of  embedding of the Mordell--Weil group in the center  of $\exp (\mathfrak{g}^\vee)$ \cite{Mayrhofer:2014opa}.

\begin{defn}[$G$-model]
 An elliptic fibration  $\varphi:Y\longrightarrow B$ with an associated Lie group $G=G(\varphi)$  is called a \emph{$G$-model}. 
  \end{defn}

If the reduced discriminant locus has a unique irreducible component  $S$ over which the generic fiber is not irreducible, the group $G(\varphi)$ is simple. 
The relevant fiber $\widetilde{\mathfrak{g}}^t$ can be realized by resolving the singularities of a Weierstrass model  derived from Tate's algorithm. 
The relation between the fiber type and the group $G(\varphi)$ is not one-to-one. 
For example, an SU(2)-model can be given by a divisor $S$ with a fiber of type  I$_2^\text{s}$, I$_2^{\text{ns}}$, III, IV$^{\text{ns}}$, or I$_3^{\text{ns}}$. 
For that reason, a given decorated Kodaira fiber provides a more refined characterization of a $G$-model.

\begin{exmp}
For $n\geq 4$, an SU($n$)-model is a I$_n^\text{s}$-model with a trivial Mordell-Weil group. 
For $n\geq 0$, a Spin(8+2$n$)-model is  an  I$_{n}^{*\text{s}}$-model with trivial Mordell-Weil group. 
For $n\geq 1$, a Spin(7+2$n$)-model is an I$_{n}^{*\text{ns}}$-model with  trivial Mordell-Weil group. 
A G$_2$-model is an I$_0^{*\text{ns}}$-model with a trivial Mordell-Weil group. 
A Spin(7)-model is an I$_{0}^{*\text{ss}}$-model with a trivial Mordell-Weil group. 
\end{exmp}

\begin{exmp}[See \cite{MP}]
The SO($3$), SO($5$), SO($6$), and SO($7$)-models are respectively I$_2^{\text{ns}}$, I$_4^\text{ns}$, I$_4^\text{s}$, and I$_0^\text{*\text{ss}}$-models with MW=$\mathbb{Z}/2\mathbb{Z}$.
For $n\geq 0$, an SO($8+2n$)-model is an I$_n^\text{*\text{s}}$-model with a Mordell-Weil group MW=$\mathbb{Z}/2\mathbb{Z}$. 
For $n\geq 1$, an SO($7+2n$)-model is an I$_n^\text{*\text{ns}}$-model with Mordell-Weil group MW=$\mathbb{Z}/2\mathbb{Z}$. 
\end{exmp}

 \begin{exmp}
If the Mordell-Weil group is trivial, $\mathcal{ K}$-models with $\mathcal{ K}=$  I$_2^\text{s}$, I$_2^{\text{ns}}$, III,  IV$^{\text{ns}}$, or I$_3^{\text{ns}}$, are all SU(2)-models. 
An A$_2$-model can be given by a IV$^\text{s}$-model or a I$_3$-model.  
If the Mordell-Weil group is trivial, both a IV$^\text{s}$-model or a I$_3^\text{s}$-model give a SU(3)-model. 
A  C$_{\ell}$-model can be given by an I$_{2\ell+2}^{\text{ns}}$-model or an I$_{2\ell+3}^{\text{ns}}$-model,  and if  the Mordell-Weil group is trivial, these both give a USp($2\ell$)-model.
\end{exmp}

\begin{rem}
Not all singular Weierstrass models are $G$-models as the reducible singular fibers might not appear in codimension one. See, for example, the Jacobians of the elliptic fibrations discussed in  \cite{AE1,AE2,EFY,EKY}.
\end{rem}

\subsection{The pushforward theorem and Jacobi's identity}
\label{subsec:pushthm}

As explained earlier, one of our key results is a pushforward theorem that streamlines all the computations of this paper. We present the pushforward theorem in this subsection.

\begin{restatable}{thm}{Push} \label{Thm:Push}
    Let the nonsingular variety $Z\subset X$ be a complete intersection of $d$ nonsingular hypersurfaces $Z_1$, \ldots, $Z_d$ meeting transversally in $X$. Let $E$ be the class of the exceptional divisor of the blowup $f:\widetilde{X}\longrightarrow X$ centered 
at $Z$.
 Let $\widetilde{Q}(t)=\sum_a f^* Q_a t^a$ be a formal power series with $Q_a\in A_*(X)$.
 We define the associated formal power series  ${Q}(t)=\sum_a Q_a t^a$ whose coefficients pullback to the coefficients of $\widetilde{Q}(t)$. 
 Then the pushforward $f_*\widetilde{Q}(E)$ is:
 $$
  f_*  \widetilde{Q}(E) =  \sum_{\ell=1}^d {Q}(Z_\ell) M_\ell, \quad \text{where} \quad  M_\ell=\prod_{\substack{m=1\\
 m\neq \ell}}^d  \frac{Z_m}{ Z_m-Z_\ell }.
 $$ 
\end{restatable}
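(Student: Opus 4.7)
The plan is to reduce the formula to computing the moments $f_*(E^n)$ via the projection formula, evaluate these using the standard projective-bundle description of the exceptional divisor and Segre classes of the normal bundle, and then match against $\sum_\ell Z_\ell^n M_\ell$ using Jacobi's partial fraction identity for complete homogeneous symmetric polynomials.

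For the reduction, the projection formula gives $f_*(f^* Q_a \cdot E^a) = Q_a \cdot f_*(E^a)$, so expanding $\widetilde{Q}(E) = \sum_a f^* Q_a \cdot E^a$ term by term reduces the theorem to proving $f_*(E^n) = \sum_{\ell=1}^d Z_\ell^n M_\ell$ in $A_*(X)$ for each $n \geq 0$. Since $Z = Z_1 \cap \cdots \cap Z_d$ is a transverse complete intersection, the normal bundle splits as $N_Z X = \bigoplus_\ell \mathcal{O}_X(Z_\ell)|_Z$, and $E \cong \mathbb{P}(N_Z X) \xrightarrow{g} Z$ is a $\mathbb{P}^{d-1}$-bundle with $E|_E = -\zeta$, where $\zeta = c_1(\mathcal{O}_E(1))$. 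Writing $j : E \hookrightarrow \widetilde{X}$ and $i : Z \hookrightarrow X$, combining the projective-bundle pushforward $g_*(\zeta^k) = s_{k-d+1}(N_Z X)$ with the identity $E^n = (-1)^{n-1} j_*(\zeta^{n-1})$ for $n \geq 1$ yields
\[
f_*(E^n) = (-1)^{n-1}\, i_*\, s_{n-d}(N_Z X) \quad (n \geq 1), \qquad f_*(E^0) = [X].
\]
Whitney's formula for a direct sum of line bundles gives $s_k(N_Z X) = (-1)^k h_k(Z_1,\dots,Z_d)|_Z$, where $h_k$ is the complete homogeneous symmetric polynomial (with $h_k = 0$ for $k<0$); combined with $i_*(\alpha|_Z) = \alpha \cdot Z_1 \cdots Z_d$, this gives
\[
f_*(E^n) = (-1)^{d-1}\, Z_1 \cdots Z_d \cdot h_{n-d}(Z_1,\dots,Z_d) \qquad (n \geq 1).
\]

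For the right-hand side, rewrite $M_\ell = (-1)^{d-1}\, Z_1 \cdots Z_d \big/ \bigl(Z_\ell \prod_{m \neq \ell}(Z_\ell - Z_m)\bigr)$. Then for $n \geq 1$,
\[
\sum_{\ell=1}^d Z_\ell^n M_\ell = (-1)^{d-1}\, Z_1 \cdots Z_d \sum_{\ell=1}^d \frac{Z_\ell^{n-1}}{\prod_{m \neq \ell}(Z_\ell - Z_m)} = (-1)^{d-1}\, Z_1 \cdots Z_d \cdot h_{n-d}(Z_1,\dots,Z_d),
\]
the last equality being Jacobi's identity $\sum_\ell \xi_\ell^k/\prod_{m \neq \ell}(\xi_\ell - \xi_m) = h_{k-d+1}(\xi)$, proved by summing the residues of $\xi^k / \prod_m (\xi - \xi_m)$ (the residue at infinity vanishes when $k \leq d-2$, giving $h_{k-d+1} = 0$). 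This matches the formula above for $n \geq 1$. The remaining case $n = 0$ is the identity $\sum_\ell M_\ell = 1 = f_*[\widetilde{X}]$, which follows analogously from the sum of residues of $1/\bigl(\xi \prod_m (\xi - \xi_m)\bigr)$.

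The principal obstacle is the formal interpretation: the individual $M_\ell$ are rational expressions in the classes $Z_m$ and are not themselves Chow classes, so one must carry out Jacobi's identity in a polynomial ring in abstract variables $\xi_\ell$ and only specialize $\xi_\ell \to Z_\ell$ after clearing denominators, so that the result lands unambiguously in $A_*(X)$. Once this formalism is in place, the proof is a clean match between the Segre-class formula and Jacobi's identity.
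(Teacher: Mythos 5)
Your proposal is correct and follows essentially the same route as the paper: reduce to $f_*(E^n)$ by the projection formula, compute that pushforward via the Segre class of the split normal bundle $N_ZX=\bigoplus_\ell\mathscr{O}(Z_\ell)|_Z$ to get $(-1)^{d-1}h_{n-d}(Z_1,\dots,Z_d)\,Z_1\cdots Z_d$, and convert to $\sum_\ell Z_\ell^nM_\ell$ by Jacobi's partial-fraction identity (which the paper likewise proves by partial fractions/residues in its Appendix A). Your explicit treatment of the $n=0$ case via $\sum_\ell M_\ell=1$ is a small point the paper's Lemma 3.4/3.6 glosses over, since its Segre-class generating function only captures $n\geq 1$, but this does not change the substance of the argument.
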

We call the coefficient $M_\ell$ the {\em $\ell$-moment} of the blowup $f$. 
\begin{rem} 
 Given a blowup $f:\widetilde{X}\longrightarrow X$, any element $\alpha$ of the Chow ring $A_*(\widetilde{X})$  can be expressed as $\alpha= \sum_{n=0}^\infty f^* \alpha_i  E^i$ where $\alpha_i$ are elements of the Chow ring $A_*(X)$. So Theorem \ref{Thm:Push} can be used to pushforward any 
element of $A_*(\widetilde{X})$. 
\end{rem}

Theorem \ref{Thm:Push} is proven in \S \ref{Sec:Pushforward}. 
By the projection formula and the linearity of the pushforward, the proof of Theorem \ref{Thm:Push} is  almost trivial once it is established in the special case of a monic monomial $Q(t)=t^k$.  This special case is Lemma \ref{lem:PushE} on page \pageref{lem:PushE}. The proof of the Lemma  \ref{lem:PushE} relies on an identity due to Carl Gustave Jacobi that gives a partial fraction formula for  homogeneous complete symmetric polynomials:

\begin{restatable}[Jacobi]{lem}{Jacobi}
\label{Jacobi} Let $h_r(x_1, \ldots, x_d)$ be the  homogeneous complete symmetric polynomial of degree $r$ in $d$ variables of an integral domain. Then: 
$$
h_{r}(x_1, \cdots, x_d)=
\sum_{\ell=1}^d x_\ell^{r+d-1} \prod^d_{\substack{m=1\\
 m\neq \ell}} \frac{1}{ x_\ell -x_m }.
$$
\end{restatable}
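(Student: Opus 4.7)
The plan is to prove Jacobi's identity by partial fractions applied to the generating function of the complete homogeneous symmetric polynomials. Working inside the fraction field of the polynomial ring over the given integral domain (which we need for the $x_\ell-x_m$ to be invertible), I would introduce the generating series
$$F(t)=\prod_{i=1}^d \frac{1}{1-x_i t}=\sum_{r\geq 0} h_r(x_1,\ldots,x_d)\, t^r,$$
the second equality being the definition of $h_r$. The strategy is to evaluate $F(t)$ a second time via a partial fraction decomposition in the variable $t$ and then match coefficients of $t^r$ on both sides.

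First I would assume the $x_i$ are pairwise distinct (justified a posteriori since the desired identity is an equality of rational functions in $x_1,\ldots,x_d$, so it suffices to verify it on the generic point). Then $F(t)$ has simple poles at $t=1/x_\ell$, and the partial fraction decomposition reads
$$F(t)=\sum_{\ell=1}^d \frac{A_\ell}{1-x_\ell t},$$
with
$$A_\ell=\lim_{t\to 1/x_\ell}(1-x_\ell t)\, F(t)=\prod_{\substack{m=1\\ m\neq\ell}}^d\frac{1}{1-x_m/x_\ell}=\prod_{\substack{m=1\\ m\neq\ell}}^d\frac{x_\ell}{x_\ell-x_m}=\frac{x_\ell^{d-1}}{\prod_{m\neq\ell}(x_\ell-x_m)}.$$

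Next I would expand each summand as a geometric series, $\frac{1}{1-x_\ell t}=\sum_{r\geq 0} x_\ell^r t^r$, giving
$$F(t)=\sum_{r\geq 0}\left(\sum_{\ell=1}^d \frac{x_\ell^{r+d-1}}{\prod_{m\neq\ell}(x_\ell-x_m)}\right)t^r.$$
Comparing the coefficient of $t^r$ with the $h_r$ expansion of $F(t)$ yields the claimed formula. The identity is then promoted from the generic case to arbitrary elements of an integral domain by the usual specialization argument, since both sides are defined whenever the differences $x_\ell-x_m$ are nonzero.

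There is essentially no serious obstacle here; the only thing that requires attention is the mild subtlety of working with the rational function $\sum_\ell x_\ell^{r+d-1}/\prod_{m\neq\ell}(x_\ell-x_m)$, which is \emph{a priori} only defined in the fraction field, but is in fact a polynomial (it equals $h_r$) once the denominators cancel. This cancellation is automatic from the above derivation, so no separate argument is needed.
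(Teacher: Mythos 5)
Your proof is correct and follows essentially the same route as the paper: a partial fraction decomposition of the generating function $\prod_i(1-x_it)^{-1}$ of the complete homogeneous symmetric polynomials, followed by a geometric series expansion and coefficient matching. The paper merely performs the partial fraction step on $\prod_i(x-a_i)^{-1}$ first and then substitutes $x\mapsto 1/t$, which is the same computation in a different order.
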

Jacobi first proved this identity in 1825 in a slightly different form in his doctoral thesis\footnote{ \cite[Section III.17, p.  29-30]{Jacobi.Thesis}, Jacobi asserts:
$$
\prod_i \frac{1}{x-a_i}=\sum_i \frac{1}{x-a_i} \prod_{\ell\neq i}\frac{1}{a_\ell-a_i}
$$
} as a partial fraction  reformulation of the generating function of complete homogeneous  polynomials. 
Lemma  \ref{Jacobi} was rediscovered  in many different mathematical and physical problems, as discussed elegantly in \cite{Gustafson1983177}. For example, a proof using Schur polynomials was proposed as the solution to Exercise 7.4 of \cite{Stanley.II}.
 For a proof using integrals and residues see Appendix A of  \cite{Louck:1970jd}; for a proof using matrices, see  \cite{MR2840951}.
We give a short and simple proof of this identity in {Appendix} \ref{Sec:Jacobi}.

We also make use of a second pushforward theorem that concerns the projection from the ambient projective bundle to the base $B$ over which the Weierstrass model is defined.  
Let $\mathscr{V}$ be a vector bundle of rank $r$ over a nonsingular variety $B$. The Chow ring of a projective bundle $\pi: \mathbb{P}(\mathscr{V})\longrightarrow B$ is  isomorphic to the module $ A_*(B)[\zeta]$ modded out by the relation \cite[Remark 3.2.4, p. 55]{Fulton.Intersection}
 $$\zeta^r+ c_1(\pi^* \mathscr{V}) \zeta^{r-1}+\cdots+ c_i(\pi^* \mathscr{V}) \zeta^{r-i}+\cdots + c_r (\pi^* \mathscr{V})=0, \quad \zeta=c_1 \Big( \mathscr{O}_{\mathbb{P}(\mathscr{V})}(1)\Big).$$

\begin{thm}[{See \cite{AE1,AE2,Fullwood:SVW}}]\label{Thm:PushH}
Let $\mathscr{L}$ be a line bundle over a variety $B$ and $\pi: X_0=\mathbb{P}[\mathscr{O}_B\oplus\mathscr{L}^{\otimes 2} \oplus \mathscr{L}^{\otimes 3}]\longrightarrow B$ a projective bundle over $B$. 
 Let $\widetilde{Q}(t)=\sum_a \pi^* Q_a t^a$ be a formal power series in  $t$ such that $Q_a\in A_*(B)$. Define the auxiliary power series $Q(t)=\sum_a Q_a t^a$. 
Then 
$$
\pi_* \widetilde{Q}(H)=-2\left. \frac{{Q}(H)}{H^2}\right|_{H=-2L}+3\left. \frac{{Q}(H)}{H^2}\right|_{H=-3L}  +\frac{Q(0)}{6 L^2},
$$
 where  $L=c_1(\mathscr{L})$ and $H=c_1(\mathscr{O}_{X_0}(1))$ is the first Chern class of the dual of the tautological line bundle of $X_0$.
\end{thm}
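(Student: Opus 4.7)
The plan is to reduce Theorem \ref{Thm:PushH} to Jacobi's identity (Lemma \ref{Jacobi}) applied to the Chern roots of $\mathscr{V} := \mathscr{O}_B \oplus \mathscr{L}^{\otimes 2} \oplus \mathscr{L}^{\otimes 3}$. Since $c(\mathscr{V}) = (1)(1 + 2L)(1 + 3L) = 1 + 5L + 6L^2$, the Chow-ring relation for the projective bundle displayed just before the theorem statement factors as $H(H + 2L)(H + 3L) = 0$ in $A_*(X_0)$, and the (formal) Chern roots of $\mathscr{V}$ are $\alpha_1 = 0$, $\alpha_2 = 2L$, $\alpha_3 = 3L$.

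The core step is to compute the monomial pushforward $\pi_*(H^a)$. By the standard projective-bundle formula $\pi_*(H^a) = s_{a - r + 1}(\mathscr{V})$ with $r = 3$ (and $s_j = 0$ for $j < 0$, which accounts for the forced vanishings $\pi_*(H^0) = \pi_*(H^1) = 0$), together with the splitting principle, the Segre classes can be written as $s_j(\mathscr{V}) = h_j(-\alpha_1, -\alpha_2, -\alpha_3) = h_j(0, -2L, -3L)$. Applying Lemma \ref{Jacobi} with $d = 3$ and $x_\ell = -\alpha_\ell$ then gives, for every $a \ge 2$,
\[
\pi_*(H^a) \;=\; \sum_{\ell=1}^{3} \frac{x_\ell^{a}}{\prod_{m \ne \ell}(x_\ell - x_m)} \;=\; \frac{0^a}{6L^2} \;-\; \frac{(-2L)^a}{2L^2} \;+\; \frac{(-3L)^a}{3L^2}.
\]
A direct check using the power-series convention $0^0 := 1$ shows that the right-hand side also vanishes at $a = 0$ and $a = 1$, so this closed form is in fact valid for every $a \ge 0$.

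By linearity of the proper pushforward and the projection formula applied to the pulled-back coefficients $\pi^* Q_a$, summing over $a$ with weights $Q_a$ yields
\[
\pi_*\widetilde{Q}(H) \;=\; \sum_{a} Q_a \,\pi_*(H^a) \;=\; \frac{Q(0)}{6L^2} \;-\; \frac{Q(-2L)}{2L^2} \;+\; \frac{Q(-3L)}{3L^2},
\]
which matches the claimed formula after rewriting $-(2L^2)^{-1} = -2\,(-2L)^{-2}$ and $(3L^2)^{-1} = 3\,(-3L)^{-2}$. The one delicate point — and what I expect to be the main bookkeeping obstacle — is the zero Chern root coming from the trivial summand $\mathscr{O}_B$: the factor $H^a$ vanishes at $H = 0$ for $a \ge 1$, so it is only the constant term $Q_0 = Q(0)$ that survives the $\ell = 1$ residue, and one must treat this single term carefully (or adopt the convention $0^0 = 1$ consistently) in order for the distinguishing $Q(0)/(6L^2)$ contribution to appear correctly. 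Everything else is a routine application of the splitting principle and Jacobi's identity.
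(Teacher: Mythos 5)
Your proof is correct, and the final answer agrees with the paper's formula (including the check that the residue expression vanishes for $a=0,1$ and that the $\ell=1$ term contributes exactly $Q(0)/(6L^2)$). The underlying computational input is the same as the paper's — both arguments rest on $\pi_* H^a = s_{a-2}(\mathscr{V})$, i.e.\ on the identity $\pi_*\bigl(\tfrac{1}{1-H}\bigr)=\tfrac{1}{(1+2L)(1+3L)}$ — but you organize the extraction differently. The paper performs a two-term partial fraction decomposition $\tfrac{1}{(1+2L)(1+3L)}=\tfrac{-2}{1+2L}+\tfrac{3}{1+3L}$ on the nonzero Chern roots only, reads off $\pi_* H^{i+2}=\bigl[-2(-2)^i+3(-3)^i\bigr]L^i$ together with $\pi_*1=\pi_*H=0$, and then has to subtract $\alpha_0+\alpha_1 H$ by hand and recombine, which is where the correction term $Q(0)/(6L^2)$ appears at the very end. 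You instead apply the three-variable Jacobi identity (Lemma \ref{Jacobi}) to the full set of roots $\{0,-2L,-3L\}$, so the $Q(0)/(6L^2)$ term emerges automatically as the residue at the zero root and no low-order bookkeeping is needed. This has the added merit of making Theorem \ref{Thm:PushH} structurally identical to Lemma \ref{lem:PushE} and Theorem \ref{Thm:Push} (it is the same "moments" formula $\sum_\ell Q(x_\ell)M_\ell$, here with centers of classes $0$, $-2L$, $-3L$), at the mild cost of having to note that the Jacobi residue sum vanishes for $a=0,1$ — which you do. One small point worth stating explicitly if you write this up: as in the rest of the paper, the expressions $\prod_{m\neq\ell}(x_\ell-x_m)^{-1}$ are to be read as formal rational expressions whose combination is a genuine polynomial in $L$, since $L$ need not be a non-zero-divisor in $A_*(B)$.
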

\begin{proof}
Using the functoriality of Segre classses, we can write
\begin{equation}
\pi_* \left( \frac{1}{1-H} \right)=\frac{1}{(1+2L)(1+3L)}=\frac{-2}{1+2L}+\frac{3}{1+3L},\nonumber
\end{equation}
which can be expanded on the both sides. This gives the following expressions for the pushforward of each power of $H$:
\begin{equation}
\pi_* 1 = 0 ~~, ~~ \pi_* H = 0~~,~~\pi_* H^{i+2} =\left[ -2(-2)^i +3(-3)^i \right] L^i \nonumber
\end{equation}
where $i$ is nonnegative. Then, expanding $Q(H)$ as a power series with coefficients in $A_*(B)$,
\begin{equation}
\widetilde{Q}(H)=\sum_{i=0}^{\infty} \pi^* \alpha_i H^i =\pi^*\alpha_0 +(\pi^*\alpha_1) H +H^2 \sum_{k=0}^{\infty} (\pi^* \alpha_k) H^k,\nonumber
\end{equation}
the pushforward of $Q(H)$ can hence be computed as
\begin{align}
 \pi_* \widetilde{Q}(H) &=-2\sum_{k=0}^{\infty} \alpha_k (-2L)^k +3\sum_{k=0}^{\infty} \alpha_k (-3L)^k\nonumber \\
&= -2\frac{Q(H)-\alpha_1 H-\alpha_0}{H^2} \Big|_{H=-2L}+3\frac{Q(H)-\alpha_1 H-\alpha_0}{H^2}\Big|_{H=-3L} \nonumber\\
&= -2\frac{Q(H)}{H^2}\Big|_{H=-2L}+3\frac{Q(H)}{H^2}\Big|_{H=-3L}+\frac{Q(0)}{6L^2}.\nonumber
\end{align}
\end{proof}

\subsection{Strategy}
{We take an intersection theory point of view inspired by  Fulton \cite{Fulton.Intersection} and Aluffi \cite{Aluffi_CBU}, and use explicit crepant resolutions of Tate models to compute their Euler characteristics. }
Using Chern classes, we evaluate the Euler characteristic without dealing with the combinatorics of the fiber structure. 
Instead, we compute the pushforward of the homological Chern class of the variety to the base of the fibration. 
Since the Euler characteristics of two crepant resolutions of the same Weierstrass model are the same \cite{Batyrev.Betti}, we do not need to explore the network of all flops to arrive at our conclusions.

Our method for computing the Euler characteristics of $G$-models is as follows. 
Given a choice of Lie group $G$, we first use Tate's algorithm to determine a singular Weierstrass model $Y_0\longrightarrow B$  such that $G$ is the Lie group attached to the elliptic fibration following the F-theory algorithm discussed in \S \ref{Sec:G.Models}.
We then determine a crepant resolution $f:Y\longrightarrow Y_0$ of the singular Weierstrass model to obtain an explicit realization of the $G$-model as a smooth projective variety. 
By doing so, we retrieve the data necessary to compute the total homological Chern class of the crepant resolution $f:Y\longrightarrow Y_0$. 
We apply Theorem \ref{Thm:Push} repeatedly to push this class forward to the projective bundle $X_0$ in which the Weierstrass model is defined. 
Finally, we use Theorem \ref{Thm:PushH}  to push the total Chern class forward to $B$. In doing so, we obtain a generating function of the form 
$$
\chi(Y)=\int_B Q(L,S) c(B), \quad c(B):=c(TB)\cap [B],
$$
 where$\int_B$ indicates the degree,  $Q(L,S)$ is a rational function in $L$ and $S$ such that 
$$Q(L,0)=\frac{12L}{1+6L} c(B).$$
 $Q(L,0)$ is the generating function for the Euler characteristic of a smooth Weierstrass model \cite{AE1}. 
 The rational expression $
Q(L,S) c(B)$ is defined in the Chow ring $A_*(B)$ of the base. The expression $\chi(Y)$ is a generating function in the following sense. 
 If the base has dimension $d$, the Euler characteristic is then given by the coefficient of $t^d$ in a power series expansion in the parameter $t$: 
 $$
 \chi(Y)= [t^d] \   \Big(Q(tL,tS) c_t(TB)\Big), \quad \text{where} \quad d:=\dim\  B, 
 $$ 
 where $[t^n] g(t)=g_n$ for a formal series $g(t)=\sum_{i=0}^\infty g_i t^i$, and 
 $$c_t(TB)= 1+c_1( TB) t + \cdots+c_d(TB) t^d,$$
  is the Chern polynomial of the tangent bundle of $B$.

It follows from the adjunction formula that  one can further impose the Calabi-Yau condition by setting $L= c_1(TB)$; see Tables \ref{Table:Euler3} and \ref{Table:Euler4} for the Euler characteristics of elliptic threefold and fourfold $G$-models.

In Table \ref{tab:incl}, we organize the Lie algebras associated to our choices of Tate models into a network, where an arrow indicates inclusion as a subalgebra.
\begin{table}[t]
$$
\begin{tikzcd}[column sep=.6cm, row sep=.2cm]
  &  & &  \text{C}_2  \arrow[rightarrow]{dd}   \arrow[rightarrow]{r}  & {\text{A}}_3  \arrow[rightarrow]{dd}     \arrow[rightarrow]{r}  &  {\text{A}}_4   \arrow[rightarrow]{r}  & {\text{D}}_5   \arrow[rightarrow]{dd} & & \\
 &&&&&&&&&\\
 {\text{A}}_1  \arrow[rightarrow]{r}  & {\text{A}}_2   \arrow[rightarrow]{r} & {\text{G}}_2 \arrow[rightarrow]{r} & {\text{B}}_3 \arrow[rightarrow]{r} & {\text{D}} _4 \arrow[rightarrow]{r} &{\text{F}}_4 \arrow[rightarrow]{r} &{\text{E}}_6  \arrow[rightarrow]{r} & {\text{E}}_7  \arrow[rightarrow]{r} & {\text{E}}_8 \end{tikzcd}
$$
$$
\begin{tabular}{|c|c|c|c|c|c|c|c|c|c|c|c|c|}
\hline
I$_2^\text{s}$, \   I$_2^{\text{ns}}$,\    III, \   IV$^{\text{ns}}$,\   I$_3^{\text{ns}}$  &IV$^\text{s}$,  I$_3^\text{s}$ &  I$_4^\text{s}$ & I$_4^{\text{ns}}$ &  I$_5^\text{s}$ &  I$_0^{*\text{ns}}$ &  I$_0^{*\text{ss}}$ &  I$_0^{*\text{s}}$ &  IV$^{*\text{ns}}$ &   I$_1^{*\text{s}}$ & IV$^{*\text{s}}$ & III$^*$ & II$^*$\\
\hline
A$_1$ & A$_2$ & A$_3$ & C$_2$ & A$_4$ & G$_2$ & B$_3$ & D$_4$ & F$_4$ & D$_5$& E$_6$ & E$_7$ & E$_8$\\
\hline
\end{tabular}
$$

\caption{Models studied in this paper. }
\label{tab:incl}
\end{table} As is evident from Table \ref{tab:incl}, the results of this paper cover all instances of Kodaira fibers  with the exception of the general cases of  I$_k$ and I$_k^*$ that will be discussed in a follow up paper.
In particular, our list contains:
\begin{itemize}
\item $G$-models corresponding to Deligne exceptional series: 
$$
\{e\}\subset \text{A}_1 \subset \text{A}_2 \subset \text{G}_2 \subset \text{D}_4\subset \text{F}_4\subset \text{E}_6 \subset \text{E}_7\subset \text{E}_8.
$$
\item 
$G$-models for the extended exceptional series\footnote{\label{fn:exceptional} We recall that  the Dynkin diagram of E$_n$ is the same as  A$_n$ but with the $n$th node connected with the third node. In particular, $\text{E}_4\cong \text{A}_4$, $\text{E}_5\cong \text{D}_5$, 
E$_3=$ A$_2\times$ A$_1$, E$_2=$A$_2$, and E$_1=$A$_1$. 
}: 
$$
\{e\}\subset \text{A}_1 \subset \text{A}_2 \subset  \text{A}_3 \subset  \text{E}_4 \subset \text{E}_5 \subset \text{E}_6 \subset \text{E}_7\subset \text{E}_8.
$$
\item $G$-models for simple orthogonal groups of small rank\footnote{These models require a Mordell-Weil group $\mathbb{Z}/2\mathbb{Z}$; see \cite{MP}.}: 
$$
\{e\}\subset \text{SO}(3) \subset \text{SO}(5) \subset \text{SO}(6).
$$
\item $G$-models of the  I$_0^*$ series \cite{G2}:
$$
\{e\}\subset \text{G}_2 \subset \text{Spin}(7) \subset \text{Spin}(8).
$$

\end{itemize}

\subsection{Organization of the paper}

 The remainder of the paper is organized as follows.
  In Section \ref{sec:Euler} we discuss some general properties of the Euler characteristic of  an elliptic fibration. 
 In Section \ref{Sec:Pushforward} we discuss the pushforward theorem and explain the details of our computation of the Euler characteristic. Section \ref{sec:Hodge} then describes how these results can be used to calculate the Hodge numbers of Calabi-Yau threefold $G$-models. In Section \ref{sec:example}, we describe the simplest model, the SU(2)-model, as an example of our computation. We present the results of our computation in a series of tables in Section \ref{sec:results}. Finally, in Section \ref{sec:discuss} we conclude with a discussion of the computation and comment on possible future research directions. A proof of Jacobi's partial fraction identity is given in Appendix \ref{Sec:Jacobi}, an explanation of the Euler  characteristic as the degree of the top Chern class is given in Appendix \ref{appB.Euler}, and some basic facts about Kodaira fibers, elliptic fibrations, Weierstrass models and Tate's algorithm are collected in Appendix \ref{Sec:BasicNotions}.

\begin{table}[hbt]
\begin{center}
\scalebox{1}{$			
\begin{array}{|c | c | c |} 
\hline
\text{Fiber type} & \text{Dynkin diagram} & \text{Kodaira type}
\\\hline 
			\widetilde{\text{A}}_0
 &			\scalebox{.97}{$\begin{array}{c}\begin{tikzpicture}
				\node[draw,circle,thick,scale=1.25,label=above:{1}, fill=black] (1) at (0,0){};
			\end{tikzpicture}\end{array}
		$} 		
				& \text{I}_1, \quad \text{II}\\\hline

			\widetilde{\text{A}}_1
 &			\scalebox{.97}{$\begin{array}{c}\begin{tikzpicture}
				\node[draw,circle,thick,scale=1.25,label=above:{1}, fill=black] (1) at (0,0){};
				\node[draw,circle,thick,scale=1.25,label=above:{1}] (2) at (1.3,0){};
				\draw[thick] (0.15,0.1) --++ (.95,0);
				\draw[thick] (0.15,-0.09) --++ (.95,0);
				
			\end{tikzpicture}\end{array}
		$} 		& \text{I}_2^\text{s}, \quad \text{I}_2^{\text{ns}}, \quad\text{I}_3^{\text{ns}},\quad \text{III},\quad \text{IV}^{\text{ns}}
				\\\hline
{ \widetilde{\text{A}}_{\ell-1}\quad {(\ell\geq 3)}}
& \scalebox{.97}{$\begin{array}{c} \begin{tikzpicture}
				\node[draw,circle,thick,fill=black,scale=1.25,label=above:{1}] (0) at (90:1.1){};	
				\node[draw,circle,thick,scale=1.25,label=above:{1}] (1) at (-2,0){};
				\node[draw,circle,thick,scale=1.25,label=below:{1}] (2) at (-1,0){};
				\node[draw,circle,thick,scale=1.25,label=below:{1}] (3) at (1,0){};	
				\node[draw,circle,thick,scale=1.25,label=above:{1}] (4) at (2,0){};	
				\draw[thick]   (2)--(1)--(0)--(4)--(3);
\draw[ultra thick, loosely dotted] (2) to (3) {};

			\end{tikzpicture}\end{array}$}            &{ \text{I}_{\ell}^\text{s}   }    \\\hline

\widetilde{\text{D}}_{4+\ell}{\quad  (\ell\geq 0)}
						& 
						\scalebox{.97}{$\begin{array}{c} \begin{tikzpicture}
				\node[draw,circle,thick,scale=1.25,fill=black,label=above:{1}] (1) at (-.1,.7){};
				\node[draw,circle,thick,scale=1.25,label=above:{1}] (2) at (-.1,-.7){};	
				\node[draw,circle,thick,scale=1.25,label=above:{2}] (3) at (1,0){};
				\node[draw,circle,thick,scale=1.25,label=above:{2}] (4) at (2.1,0){};
				\node[draw,circle,thick,scale=1.25,label=above:{2}] (5) at (3.3,0){};	
				\node[draw,circle,thick,scale=1.25,label=above:{1}] (6) at (4.6,.7){};	
								\node[draw,circle,thick,scale=1.25,label=above:{1}] (7) at (4.6,-.7){};	
				\draw[thick] (1) to (3) to (4);
				\draw[thick] (2) to (3);
				\draw[ultra thick, loosely dotted] (4) to (5) {};
				\draw[thick] (5) to (6); 
				\draw[thick] (5) to (7);
			\end{tikzpicture}\end{array}$}&  \text{I}_{\ell}^{*\text{s}}    
												  \\\hline
												  
				\widetilde{\text{E}}_{6}								  &
			\scalebox{.92}{$\begin{array}{c}\begin{tikzpicture}
				\node[draw,circle,thick,scale=1.25,fill=black,label=below:{1}] (0) at (0,0){};
				\node[draw,circle,thick,scale=1.25,label=below:{2}] (1) at (1,0){};
				\node[draw,circle,thick,scale=1.25,label=below:{3}] (2) at (1*2,0){};
				\node[draw,circle,thick,scale=1.25,label=below:{2}] (3) at (1*3,0){};
				\node[draw,circle,thick,scale=1.25,label=below:{1}] (4) at (1*4,0){};
								\node[draw,circle,thick,scale=1.25,label=left:{2}] (5) at (1*2,1*1){};
																\node[draw,circle,thick,scale=1.25,label=above:{1}] (6) at (1*2,1*2){};
				\draw[thick] (0)--(1)--(2)--(3)--(4);
				\draw[thick] (2)--(5)--(6);

			\end{tikzpicture}\end{array}$}  &  \text{IV}^{*\text{s}}   
		
			\\\hline
\widetilde{\text{E}}_{7}	
								& \scalebox{.94}{$\begin{array}{c}
 \begin{tikzpicture}
				\node[draw,circle,thick,scale=1.25,fill=black, label=below:{1}] (0) at (0,0){ };
				\node[draw,circle,thick,scale=1.25,label=below:{2}] (1) at (1,0){ };
				\node[draw,circle,thick,scale=1.25,label=below:{3}] (2) at (1*2,0){ };
				\node[draw,circle,thick,scale=1.25,label=below:{4}] (3) at (1*3,0){ };
				\node[draw,circle,thick,scale=1.25,label=below:{3}] (4) at (1*4,0){ };
								\node[draw,circle,thick,scale=1.25,label=below:{2}] (5) at (5,0){ };
												\node[draw,circle,thick,scale=1.25,label=below:{1}] (6) at (6,0){ };
																\node[draw,circle,thick,scale=1.25,label=above:{2}] (7) at (3,1){ };
																								
				\draw[thick] (0)--(1)--(2)--(3)--(4)--(5)--(6);
				\draw[thick] (3)--(7);

			\end{tikzpicture}
			\end{array}$}		&  \text{III}^{*}    \\\hline	  
					\widetilde{\text{E}}_{8}							  
						&
						 \scalebox{.94}{
 $\begin{array}{c}
 \begin{tikzpicture}
				\node[draw,circle,thick,scale=1.25,fill=black,label=below:{1}] (0) at (0,0){};
				\node[draw,circle,thick,scale=1.25,label=below:{2}] (1) at (1*1,0){ };
				\node[draw,circle,thick,scale=1.25,label=below:{3}] (2) at (1*2,0){ };
				\node[draw,circle,thick,scale=1.25,label=below:{4}] (3) at (1*3,0){ };
				\node[draw,circle,thick,scale=1.25,label=below:{5}] (4) at (1*4,0){ };
								\node[draw,circle,thick,scale=1.25,label=below:{6}] (5) at (1*5,0){ };
												\node[draw,circle,thick,scale=1.25,label=below:{4}] (6) at (1*6,0){ };
																\node[draw,circle,thick,scale=1.25,label=above:{3}] (8) at (1*5,1*1){ };
																\node[draw,circle,thick,scale=1.25,label=below:{2}] (7) at (1*7,0){ };
								
				\draw[thick] (0)--(1)--(2)--(3)--(4)--(5)--(6)--(7);
				\draw[thick] (5)--(8);

			\end{tikzpicture}
			\end{array}$}  &  {\text{II}^{*} }  \\\hline
												  
												   \begin{array}{c}
												  \\
 \widetilde{\text{B}}_{3+\ell}^t {\quad (\ell\geq 0)}   \\
						 \\
						 \end{array}
						 &\scalebox{.97}{$\begin{array}{c} \begin{tikzpicture}
				\node[draw,circle,thick,scale=1.25,fill=black,label=above:{1}] (1) at (-.1,.7){};
				\node[draw,circle,thick,scale=1.25,label=above:{1}] (2) at (-.1,-.7){};	
				\node[draw,circle,thick,scale=1.25,label=above:{2}] (3) at (1,0){};
				\node[draw,circle,thick,scale=1.25,label=above:{2}] (4) at (2.1,0){};
				\node[draw,circle,thick,scale=1.25,label=above:{2}] (5) at (3.3,0){};	
				\node[draw,circle,thick,scale=1.25,label=above:{1}] (6) at (4.6,0){};	
				\draw[thick] (1) to (3) to (4);
				\draw[thick] (2) to (3);
				\draw[ultra thick, loosely dotted] (4) to (5) {};
				\draw[thick] (3.5,-0.05) --++ (.9,0){};
				\draw[thick] (3.5,+0.05) --++ (.9,0){};
				\draw[thick]
					(3.9,0) --++ (60:.25)
					(3.9,0) --++ (-60:.25);
			\end{tikzpicture}\end{array}$}		& {\begin{cases} \text{I}_{0}^{*\text{ss}}  \quad \text{for} \quad \ell= 0 \\  	 \text{I}_{\ell}^{*\text{ns}}\quad  \text{for}\quad \ell\geq 1\end{cases} }
			\\\hline
			
{\widetilde{\text{C}}_{2+\ell}^t}  {\quad  (\ell\geq 0)}
						   &
			\scalebox{.97}{$\begin{array}{c} \begin{tikzpicture}
				\node[draw,circle,thick,scale=1.25,fill=black,label=above:{1}] (1) at (-.2,0){};
				\node[draw,circle,thick,scale=1.25,label=above:{1}] (3) at (.8,0){};
				\node[draw,circle,thick,scale=1.25,label=above:{1}] (4) at (1.8,0){};
				\node[draw,circle,thick,scale=1.25,label=above:{1}] (5) at (2.8,0){};	
				\node[draw,circle,thick,scale=1.25,label=above:{1}] (6) at (3.8,0){};	
				\node[draw,circle,thick,scale=1.25,label=above:{1}] (7) at (4.8,0){};	
				\draw[thick]   (3) to (4);
				\draw[thick] (5) to (6);
				\draw[ultra thick, loosely dotted] (4) to (5) {};
				\draw[thick] (4.,-0.05) --++ (.6,0){};
				\draw[thick] (4.,+0.05) --++ (.6,0){};
								\draw[thick] (-.2,-0.05) --++ (.8,0){};
				\draw[thick] (-.2,+0.05) --++ (.8,0){};

				\draw[thick]
					(4.4,0) --++ (-120:.25)
					(4.4,0) --++ (120:.25);
					\draw[thick]
					(0.2,0) --++ (-60:.25)
					(0.2,0) --++ (60:.25);
			\end{tikzpicture}\end{array}$}
& {\text{I}_{4+2\ell}^{\text{ns}},\quad    \text{I}_{5+2\ell}^{\text{ns}}}

			\\\hline
\widetilde{\text{F}}_4^t 
&			

\scalebox{.97}{$\begin{array}{c}\begin{tikzpicture}
				\node[draw,circle,thick,scale=1.25,fill=black,label=above:{1}] (1) at (0,0){};
				\node[draw,circle,thick,scale=1.25,label=above:{2}] (2) at (1,0){};
				\node[draw,circle,thick,scale=1.25,label=above:{3}] (3) at (2,0){};
				\node[draw,circle,thick,scale=1.25,label=above:{2}] (4) at (3,0){};
				\node[draw,circle,thick,scale=1.25,label=above:{1}] (5) at (4,0){};
				\draw[thick] (1) to (2) to (3);
				\draw[thick]  (4) to (5);
				\draw[thick] (2.2,0.05) --++ (.6,0);
				\draw[thick] (2.2,-0.05) --++ (.6,0);
				\draw[thick]
					(2.4,0) --++ (60:.25)
					(2.4,0) --++ (-60:.25);
			\end{tikzpicture}\end{array}$} &  \text{IV}^{*\text{ns}}
				
				\\\hline

			\widetilde{\text{G}}_2^t

 &			\scalebox{.95}{$\begin{array}{c}\begin{tikzpicture}
				\node[draw,circle,thick,scale=1.25,label=above:{1}, fill=black] (1) at (0,0){};
				\node[draw,circle,thick,scale=1.25,label=above:{2}] (2) at (1.3,0){};
				\node[draw,circle,thick,scale=1.25,label=above:{1}] (3) at (2.6,0){};
				\draw[thick] (1) to (2);
				\draw[thick] (1.5,0.09) --++ (.9,0);
				\draw[thick] (1.5,-0.09) --++ (.9,0);
				\draw[thick] (1.5,0) --++ (.9,0);
				\draw[thick]
					(1.9,0) --++ (60:.25)
					(1.9,0) --++ (-60:.25);
			\end{tikzpicture}\end{array}
		$} 		&  \text{I}_0^{*\text{ns}}

				\\\hline\end{array}$}
	\end{center}
	\caption{
{ Affine Dynkin diagrams appearing as dual graphs of decorated Kodaira fibers. 
 } \label{Table:Affine}
}
\end{table}
\clearpage

\newpage
\clearpage

\begin{table}[htb]
\begin{center}
\scalebox{.95}{
$			
\begin{array}{|c|c| c  |} \hline
\vrule width 0pt height 3ex 
 \text{Fiber Type} & \text{ Dual graph  } & \text{Dual graph of Geometric fiber } \\\hline
 \			
			\widetilde{\text{A}}_1
 &			\scalebox{1}{$\begin{array}{c}\begin{tikzpicture}
				\node[draw,circle,thick,scale=1.25,label=above:{1}, fill=black] (1) at (0,0){};
				\node[draw,circle,thick,scale=1.25,label=above:{1}] (2) at (1.3,0){};
				\draw[thick] (0.15,0.1) --++ (.95,0);
				\draw[thick] (0.15,-0.09) --++ (.95,0);
				
			\end{tikzpicture}\end{array}
		$} &
		\scalebox{1}{$\begin{array}{c}\begin{tikzpicture}
				\node[draw,circle,thick,scale=1.25,label=above:{1}, fill=black] (1) at (-.6,0){};
				\node[draw,circle,thick,scale=1.25,label=above:{1}] (2) at (45:.6){};
				\node[draw,circle,thick,scale=1.25,label=below:{1}] (3) at (-45:.6){};
				\draw[thick] (0,0) --(1);
				\draw[thick] (0,0) --(2);
				\draw[thick] (0,0) --(3);
								\draw[<->,>=stealth',semithick,dashed]  (.8,-.5) arc (-30:30:1cm);
			\end{tikzpicture}\end{array}$}

				\\\hline

						 \begin{array}{c}

						  \text{I}^{*\text{ns}}_{\ell-3}\\
						  \\
						 \widetilde{\text{B}}_{\ell}^t  \\
						 \\
						 (\ell\geq 3)
						 \end{array}
						 &\scalebox{1}{$\begin{array}{c} \begin{tikzpicture}
				\node[draw,circle,thick,scale=1.25,fill=black,label=above:{1}] (1) at (-.1,.7){};
				\node[draw,circle,thick,scale=1.25,label=above:{1}] (2) at (-.1,-.7){};	
				\node[draw,circle,thick,scale=1.25,label=above:{2}] (3) at (1,0){};
				\node[draw,circle,thick,scale=1.25,label=above:{2}] (4) at (2.1,0){};
				\node[draw,circle,thick,scale=1.25,label=above:{2}] (5) at (3.3,0){};	
				\node[draw,circle,thick,scale=1.25,label=above:{1}] (6) at (4.6,0){};	
				\draw[thick] (1) to (3) to (4);
				\draw[thick] (2) to (3);
				\draw[ultra thick, loosely dotted] (4) to (5) {};
				\draw[thick] (3.5,-0.05) --++ (.9,0){};
				\draw[thick] (3.5,+0.05) --++ (.9,0){};
				\draw[thick]
					(3.9,0) --++ (60:.25)
					(3.9,0) --++ (-60:.25);
			\end{tikzpicture}\end{array}$}&
			
\scalebox{1}{$\begin{array}{c} \begin{tikzpicture}
				\node[draw,circle,thick,scale=1.25,fill=black,label=above:{1}] (1) at (-.1,.7){};
				\node[draw,circle,thick,scale=1.25,label=above:{1}] (2) at (-.1,-.7){};	
				\node[draw,circle,thick,scale=1.25,label=above:{2}] (3) at (1,0){};
				\node[draw,circle,thick,scale=1.25,label=above:{2}] (4) at (2.1,0){};
				\node[draw,circle,thick,scale=1.25,label=above:{2}] (5) at (3.3,0){};	
				\node[draw,circle,thick,scale=1.25,label=above:{1}] (6) at (4.6,.7){};	
								\node[draw,circle,thick,scale=1.25,label=above:{1}] (7) at (4.6,-.7){};	
				\draw[thick] (1) to (3) to (4);
				\draw[thick] (2) to (3);
				\draw[ultra thick, loosely dotted] (4) to (5) {};
				\draw[thick] (5) to (6); 
				\draw[thick] (5) to (7);
				\draw[<->,>=stealth',semithick,dashed]  (5,-.5) arc (-30:30:1.2cm);
			\end{tikzpicture}\end{array}$}
			
			\\\hline
			\begin{array}{c}
			     \text{I}_{2\ell+2}^{\text{ns}}
			\\
			\\
			\vrule width 0pt height 3ex 
 \widetilde{\text{C}}_{\ell+1}^t  \\
						 \\
						 (\ell\geq 1)
 \end{array}
   &
			\scalebox{.95}{$\begin{array}{c} \begin{tikzpicture}
				\node[draw,circle,thick,scale=1.25,fill=black,label=above:{1}] (1) at (-.2,0){};
				\node[draw,circle,thick,scale=1.25,label=above:{1}] (3) at (.8,0){};
				\node[draw,circle,thick,scale=1.25,label=above:{1}] (4) at (1.8,0){};
				\node[draw,circle,thick,scale=1.25,label=above:{1}] (5) at (2.8,0){};	
				\node[draw,circle,thick,scale=1.25,label=above:{1}] (6) at (3.8,0){};	
				\node[draw,circle,thick,scale=1.25,label=above:{1}] (7) at (4.8,0){};	
				\draw[thick]   (3) to (4);
				\draw[thick] (5) to (6);
				\draw[ultra thick, loosely dotted] (4) to (5) {};
				\draw[thick] (4.,-0.05) --++ (.6,0){};
				\draw[thick] (4.,+0.05) --++ (.6,0){};
								\draw[thick] (-.2,-0.05) --++ (.8,0){};
				\draw[thick] (-.2,+0.05) --++ (.8,0){};

				\draw[thick]
					(4.4,0) --++ (-120:.25)
					(4.4,0) --++ (120:.25);
					\draw[thick]
					(0.2,0) --++ (-60:.25)
					(0.2,0) --++ (60:.25);
			\end{tikzpicture}\end{array}$}

			&

			\scalebox{.95}{$\begin{array}{c} \begin{tikzpicture}
				\node[draw,circle,thick,scale=1.25,fill=black,label=above:{1}] (1) at (-.2,0){};	
				\node[draw,circle,thick,scale=1.25,label=below:{1}] (3a) at (.8,-.8){};
				\node[draw,circle,thick,scale=1.25,label=below:{1}] (4a) at (1.8,-.8){};
				\node[draw,circle,thick,scale=1.25,label=below:{1}] (5a) at (2.8,-.8){};	
				\node[draw,circle,thick,scale=1.25,label=below:{1}] (6a) at (3.8,-.8){};	
								\node[draw,circle,thick,scale=1.25,label=above:{1}] (3b) at (.8,.8){};
				\node[draw,circle,thick,scale=1.25,label=above:{1}] (4b) at (1.8,.8){};
				\node[draw,circle,thick,scale=1.25,label=above:{1}] (5b) at (2.8,.8){};	
				\node[draw,circle,thick,scale=1.25,label=above:{1}] (6b) at (3.8,.8){};	

				\node[draw,circle,thick,scale=1.25,label=above:{1}] (7) at (4.8,0){};	
				\draw[thick]   (4b)--(3b)--(1)--(3a)--(4a);
				\draw[thick]   (5b)--(6b)--(7)--(6a)--(5a);
				\draw[ultra thick, loosely dotted] (4a) to (5a) {};
								\draw[ultra thick, loosely dotted] (4b) to (5b) {};
\draw[<->,>=stealth',semithick,dashed] ($(4a)+(0,0.3)$) --($(4b)-(0,0.3)$) {};
\draw[<->,>=stealth',semithick,dashed] ($(5a)+(0,0.3)$) --($(5b)-(0,0.3)$) {};
\draw[<->,>=stealth',semithick,dashed] ($(6a)+(0,0.3)$) --($(6b)-(0,0.3)$) {};
\draw[<->,>=stealth',semithick,dashed] ($(3a)+(0,0.3)$) --($(3b)-(0,0.3)$) {};
			\end{tikzpicture}\end{array}$}

			\\\hline

			\begin{array}{c}
			     \text{I}_{2\ell+3}^{\text{ns}}
			\\
			\\
			\vrule width 0pt height 3ex 
 \widetilde{\text{C}}_{\ell+1}^t   \\
						 \\
						 (\ell\geq 1)
 \end{array}
   &
			\scalebox{.95}{$\begin{array}{c} \begin{tikzpicture}
				\node[draw,circle,thick,scale=1.25,fill=black,label=above:{1}] (1) at (-.2,0){};
				\node[draw,circle,thick,scale=1.25,label=above:{1}] (3) at (.8,0){};
				\node[draw,circle,thick,scale=1.25,label=above:{1}] (4) at (1.8,0){};
				\node[draw,circle,thick,scale=1.25,label=above:{1}] (5) at (2.8,0){};	
				\node[draw,circle,thick,scale=1.25,label=above:{1}] (6) at (3.8,0){};	
				\node[draw,circle,thick,scale=1.25,label=above:{1}] (7) at (4.8,0){};	
				\draw[thick]   (3) to (4);
				\draw[thick] (5) to (6);
				\draw[ultra thick, loosely dotted] (4) to (5) {};
				\draw[thick] (4.,-0.05) --++ (.6,0){};
				\draw[thick] (4.,+0.05) --++ (.6,0){};
								\draw[thick] (-.2,-0.05) --++ (.8,0){};
				\draw[thick] (-.2,+0.05) --++ (.8,0){};

				\draw[thick]
					(4.4,0) --++ (-120:.25)
					(4.4,0) --++ (120:.25);
					\draw[thick]
					(0.2,0) --++ (-60:.25)
					(0.2,0) --++ (60:.25);
			\end{tikzpicture}\end{array}$}

			&
			
				\scalebox{.95}{$\begin{array}{c} \begin{tikzpicture}
				\node[draw,circle,thick,scale=1.25,fill=black,label=above:{1}] (1) at (-.2,0){};	
				\node[draw,circle,thick,scale=1.25,label=below:{1}] (3a) at (.8,-.8){};
				\node[draw,circle,thick,scale=1.25,label=below:{1}] (4a) at (1.8,-.8){};
				\node[draw,circle,thick,scale=1.25,label=below:{1}] (5a) at (2.8,-.8){};	
				\node[draw,circle,thick,scale=1.25,label=below:{1}] (6a) at (3.8,-.8){};	
								\node[draw,circle,thick,scale=1.25,label=above:{1}] (3b) at (.8,.8){};
				\node[draw,circle,thick,scale=1.25,label=above:{1}] (4b) at (1.8,.8){};
				\node[draw,circle,thick,scale=1.25,label=above:{1}] (5b) at (2.8,.8){};	
				\node[draw,circle,thick,scale=1.25,label=above:{1}] (6b) at (3.8,.8){};	

				\node[draw,circle,thick,scale=1.25,label=below:{1}] (7a) at (4.8,-.8){};
				\node[draw,circle,thick,scale=1.25,label=above:{1}] (7b) at (4.8,.8){};	
				\draw[thick]   (4b)--(3b)--(1)--(3a)--(4a);
				\draw[thick]   (5b)--(6b)--(7b)--(7a)--(6a)--(5a);
				\draw[ultra thick, loosely dotted] (4a) to (5a) {};
								\draw[ultra thick, loosely dotted] (4b) to (5b) {};

\draw[<->,>=stealth',semithick,dashed] ($(4a)+(0,0.3)$) --($(4b)-(0,0.3)$) {};
\draw[<->,>=stealth',semithick,dashed] ($(5a)+(0,0.3)$) --($(5b)-(0,0.3)$) {};
\draw[<->,>=stealth',semithick,dashed] ($(6a)+(0,0.3)$) --($(6b)-(0,0.3)$) {};
\draw[<->,>=stealth',semithick,dashed] ($(3a)+(0,0.3)$) --($(3b)-(0,0.3)$) {};
\draw[<->,>=stealth',semithick,dashed] (5.2,-1) arc (-70:70:1) {};
			\end{tikzpicture}\end{array}$}

			\\\hline
			\begin{array}{c}
			\text{IV}^{* \text{ns}}
			\\ 
			\vrule width 0pt height 3ex 
\widetilde{\text{F}}_4^t      
\end{array}
&			

\scalebox{1}{$\begin{array}{c}\begin{tikzpicture}
				\node[draw,circle,thick,scale=1.25,fill=black,label=above:{1}] (1) at (0,0){};
				\node[draw,circle,thick,scale=1.25,label=above:{2}] (2) at (1,0){};
				\node[draw,circle,thick,scale=1.25,label=above:{3}] (3) at (2,0){};
				\node[draw,circle,thick,scale=1.25,label=above:{2}] (4) at (3,0){};
				\node[draw,circle,thick,scale=1.25,label=above:{1}] (5) at (4,0){};
				\draw[thick] (1) to (2) to (3);
				\draw[thick]  (4) to (5);
				\draw[thick] (2.2,0.05) --++ (.6,0);
				\draw[thick] (2.2,-0.05) --++ (.6,0);
				\draw[thick]
					(2.4,0) --++ (60:.25)
					(2.4,0) --++ (-60:.25);
			\end{tikzpicture}\end{array}$} &
			\scalebox{1}{$\begin{array}{c}\begin{tikzpicture}
				\node[draw,circle,thick,scale=1.25,fill=black,label=below:{1}] (0) at (0,0){};
				\node[draw,circle,thick,scale=1.25,label=below:{2}] (1) at (.8,0){};
				\node[draw,circle,thick,scale=1.25,label=below:{3}] (2) at (.8*2,0){};
				\node[draw,circle,thick,scale=1.25,label=below:{2}] (3) at (.8*3,0){};
				\node[draw,circle,thick,scale=1.25,label=below:{1}] (4) at (.8*4,0){};
								\node[draw,circle,thick,scale=1.25,label=left:{2}] (5) at (.8*2,.8*1){};
																\node[draw,circle,thick,scale=1.25,label=above:{1}] (6) at (.8*2,.8*2){};
				\draw[thick] (0)--(1)--(2)--(3)--(4);
				\draw[thick] (2)--(5)--(6);
				\draw[<->,>=stealth',semithick,dashed]  (2.5,0.3) arc (25:65:1.3cm);
				\draw[<->,>=stealth',semithick,dashed]  (3.3,0.3) arc (25:65:3cm);

			\end{tikzpicture}\end{array}$}
		
			\\
			\hline
			\begin{array}{c}
			\text{I}^{*\text{ss}}_{0}
			\\
			\widetilde{\text{B}}_3^t
\end{array}
 &			\scalebox{1}{$\begin{array}{c}\begin{tikzpicture}
				\node[draw,circle,thick,scale=1.25,label=below:{1}] (1) at (0,-.5){};
				\node[draw,circle,thick,scale=1.25,label=above:{2}] (2) at (1.3,0){};
				\node[draw,circle,thick,scale=1.25,label=above:{1}] (3) at (2.6,0){};
								\node[draw,circle,thick,scale=1.25,label=above:{1}, fill=black] (4) at (0,.5){};
				\draw[thick] (1) to (2);\draw[thick] (2) to (4);
				\draw[thick] (1.5,0.09) --++ (.9,0);
				\draw[thick] (1.5,-0.09) --++ (.9,0);
				\draw[thick]
					(1.9,0) --++ (60:.25)
					(1.9,0) --++ (-60:.25);
			\end{tikzpicture}\end{array}
		$} & 
		\scalebox{.8}{$\begin{array}{c}\begin{tikzpicture}
				\node[draw,circle,thick,scale=1.25,label=30:{2}] (0) at (0,0){};
				\node[draw,circle,thick,scale=1.25,label=below:{1}, fill=black] (1) at (-1,0){};
				\node[draw,circle,thick,scale=1.25,label=below:{1}] (2) at (1,0){};
				\node[draw,circle,thick,scale=1.25,label=above:{1}] (3) at (90:1){};      								\node[draw,circle,thick,scale=1.25,label=below:{1}] (4) at (90:-1){};
				\draw[thick] (0) to (1);
				\draw[thick] (0) to (2);
				\draw[thick] (0) to (3);
				\draw[thick] (0) to (4);
								\draw[<->,>=stealth',semithick,dashed]  (1.1,0.3) arc (25:65:1.7cm);
			\end{tikzpicture}\end{array}
		$}

				\\\hline

			\begin{array}{c}
			\text{I}^{*\text{ns}}_{0}
			\\
			\widetilde{\text{G}}_2^t
\end{array}
 &			\scalebox{1}{$\begin{array}{c}\begin{tikzpicture}
				\node[draw,circle,thick,scale=1.25,label=above:{1}, fill=black] (1) at (0,0){};
				\node[draw,circle,thick,scale=1.25,label=above:{2}] (2) at (1.3,0){};
				\node[draw,circle,thick,scale=1.25,label=above:{1}] (3) at (2.6,0){};
				\draw[thick] (1) to (2);
				\draw[thick] (1.5,0.09) --++ (.9,0);
				\draw[thick] (1.5,-0.09) --++ (.9,0);
				\draw[thick] (1.5,0) --++ (.9,0);
				\draw[thick]
					(1.9,0) --++ (60:.25)
					(1.9,0) --++ (-60:.25);
			\end{tikzpicture}\end{array}
		$} & 
		\scalebox{.8}{$\begin{array}{c}\begin{tikzpicture}
				\node[draw,circle,thick,scale=1.25,label=30:{2}] (0) at (0,0){};
				\node[draw,circle,thick,scale=1.25,label=above:{1}, fill=black] (1) at (-1,0){};
				\node[draw,circle,thick,scale=1.25,label=right:{1}] (2) at (1,0){};
				\node[draw,circle,thick,scale=1.25,label=above:{1}] (3) at (90:1){};
								\node[draw,circle,thick,scale=1.25,label=below:{1}] (4) at (90:-1){};
				\draw[thick] (0) to (1);
				\draw[thick] (0) to (2);
				\draw[thick] (0) to (3);
				\draw[thick] (0) to (4);
				\draw[<->,>=stealth',semithick,dashed]  (1.1,0.3) arc (25:65:1.7cm);
				\draw[<->,>=stealth',semithick,dashed]  (1.1,-0.3) arc (-25:-65:1.7cm);
			\end{tikzpicture}\end{array}
		$}

				\\\hline

				\end{array}$}
	\end{center}
	\caption{
{ Dual graphs for elliptic fibrations}  \label{Table:DualGraph}.
}
\end{table}

\clearpage

\begin{table}[hbt]
\begin{center}
\scalebox{.98}{
\begin{tabular}{|c|c|c| c | c  |c|c|c|}
\hline
& & & & & & & \\
{\small Type }&{\small  $v(c_4)$} &{\small $v(c_6)$ }& {\small $v(\Delta)$}

                                                                                          & $j$  &

                                                                                          {\small  Monodromy}                & Fiber  &
                                                                                          \begin{tabular}{c}
                                                                                          Dual \\
                                                                                          Graph
                                                                                          \end{tabular}  \\
& & & & &   & & \\

                               \hline
I$_0$ & $\geq 0$  & $\geq 0$ &  {\small $0$} &
{
$\mathbb{C}$
}
 & I$_2$ & Smooth & -    \\
\hline
& & & & & & &\\
I$_1$ & $0$ & $0$ &  {\small $1$}  & $\infty$ &
$
\begin{pmatrix}
1& 1\\
0 & 1
\end{pmatrix}
$

 &
 \begin{tikzpicture}[scale=1.5]
								\draw[scale=.5,domain=-1.2:1.2,variable=\x,  thick] plot({\x*\x-1,\x*\x*\x-\x-5});
							\end{tikzpicture}

& $\widetilde{\text{A}}_0$ 
\\
\hline
&&&&&&&\\
II & $\geq 1$ &  $1$ & $2$   &$0$&
{\small $
\begin{pmatrix}
1& 1\\
-1 & 0
\end{pmatrix}
$

}
&
 \begin{tikzpicture}[scale=1.8]
								\draw[scale=1,domain=-.7:.7,variable=\x,  thick] plot({\x*\x,1.5*\x*\x*\x*\x*\x});
							\end{tikzpicture}

& $\widetilde{\text{A}}_0$ 
 \\
\hline
&&&&&&&\\
III& {\small$1$ } & {\small $\geq 2$ }  & {\small  $3$} &  {\small $1728$} &
{\small $
\begin{pmatrix}
0 & 1\\
-1 & 0
\end{pmatrix}
$
}
  &
 \begin{tikzpicture}[scale=1.5]
								\draw[scale=.8,domain=-.7:.7,variable=\x,  thick] plot({\x*\x,.7*\x});
								\draw[scale=.8,domain=-.7:.7,variable=\x,  thick] plot({-\x*\x,.7*\x});
							\end{tikzpicture}

 & $\widetilde{\text{A}}_1$ 
 \\
\hline
IV &  {\small $\geq 2$} & {\small $2$} & {\small $4$}  &    {\small $0$} &
{\small $
\begin{pmatrix}
0 & 1\\
-1 & -1
\end{pmatrix}
$
}
 &
{  \setlength{\unitlength}{1 mm}
\begin{picture}(10,10)(-5,-2)
\put(0,0){\qbezier(-2.5,-4.33)(0,0)(2.5,4.33)
\qbezier(-2.5,4.33)(0,0)(2.5,-4.33)
\qbezier(-5,0)(0,0)(5,0)
}
\end{picture}} & $\widetilde{\text{A}}_2$
\\
\hline
I$_n$ &{\small $0$} &  {\small $0$} & {\small $n>1$ }  & $\infty$ &
{\small $
\begin{pmatrix}
1& n\\
0 & 1
\end{pmatrix}
$
}
 &
 \scalebox{.95}{$\begin{array}{c} \begin{tikzpicture}
				\node[draw,circle,thick,scale=.9,label=above:{}] (0) at (90:1.1){1};	
				\node[draw,circle,thick,scale=.9,label=above:{}] (1) at (-2,0){1};
				\node[draw,circle,thick,scale=.9,label=above:{}] (2) at (-1,0){1};
				\node[draw,circle,thick,scale=.9,label=above:{}] (3) at (1,0){1};	
				\node[draw,circle,thick,scale=.9,label=above:{}] (4) at (2,0){1};	
												\draw[thick]   (2)--(1)--(0)--(4)--(3);
\draw[ultra thick, loosely dotted] (2) to (3) {};

			\end{tikzpicture}\end{array}$}

& $\widetilde{\text{A}}_{n-1} $ 
 \\
\hline
I$^*_n$ & {\small $2$} & {\small $\geq 3$ } & {\footnotesize $n+6$} & {\small  $\infty$} &
{\small $
\begin{pmatrix}
-1& -n\\
0 &- 1
\end{pmatrix}
$
}
  &
\scalebox{.8}{   
\setlength{\unitlength}{.9 mm}
\begin{tikzpicture}[scale=1.1]
				\node[draw,circle,thick,scale=1.25,label=above:{}] (1) at (-.1,.7){1};
				\node[draw,circle,thick,scale=1.25,label=above:{}] (2) at (-.1,-.7){1};	
				\node[draw,circle,thick,scale=1.25,label=above:{}] (3) at (1,0){2};
				\node[draw,circle,thick,scale=1.25,label=above:{}] (4) at (2.1,0){ 2};
				\node[draw,circle,thick,scale=1.25,label=above:{}] (5) at (3.3,0){2};	
				\node[draw,circle,thick,scale=1.25,label=above:{}] (6) at (4.6,.7){ 1};	
								\node[draw,circle,thick,scale=1.25,label=above:{}] (7) at (4.6,-.7){ 1};	
				\draw[thick] (1) to (3) to (4);
				\draw[thick] (2) to (3);
				\draw[ultra thick, loosely dotted] (4) to (5) {};
				\draw[thick] (5) to (6); 
				\draw[thick] (5) to (7);
			\end{tikzpicture}
  }

&  $\widetilde{\text{D}}_{n+4}$ 
\\ 
 \cline{2-4}  &  {\small $\geq 2$} &{\small  $3$} &{\footnotesize  $n+6$} &  
& & &   \\
\hline
IV$^*$ & {\small  $\begin{matrix}\\   \geq 3\\  \\  \end{matrix}$} &{\small  $4$ }& {\small $8$ }&    {\small $0$
}&
{\small $
\begin{pmatrix}
-1& -1\\
1 & 0
\end{pmatrix}
$
}

&
\setlength{\unitlength}{.9 mm}
\begin{tikzpicture}
				\node[draw,circle,thick,scale=1,label=below:{}] (0) at (0,0){1};
				\node[draw,circle,thick,scale=1,label=below:{}] (1) at (1,0){ 2};
				\node[draw,circle,thick,scale=1,label=below:{}] (2) at (1*2,0){ 3};
				\node[draw,circle,thick,scale=1,label=below:{}] (3) at (1*3,0){ 2};
				\node[draw,circle,thick,scale=1,label=below:{}] (4) at (1*4,0){ 1};
								\node[draw,circle,thick,scale=1,label=left:{}] (5) at (1*2,1*1){ 2};
																\node[draw,circle,thick,scale=1,label=above:{}] (6) at (2,2){ 1};
				\draw[thick] (0)--(1)--(2)--(3)--(4);
				\draw[thick] (2)--(5)--(6);

			\end{tikzpicture}

& $\widetilde{\text{E}}_6$ 

 \\
\hline
III$^*$ &
{\small  $\begin{matrix}\\   3\\   \\  \end{matrix}$ }& {\small  $\geq 5$} & {\small $9$}&  {\small $1728$}
& {\small  $
\begin{pmatrix}
0& -1\\
1 & 0
\end{pmatrix}
$}
 &
 \scalebox{.8}{
 \setlength{\unitlength}{1.2 mm}
 $\begin{array}{c}
 \begin{tikzpicture}
				\node[draw,circle,thick,scale=1,label=below:{}] (0) at (0,0){ 1};
				\node[draw,circle,thick,scale=1,label=below:{}] (1) at (1,0){ 2};
				\node[draw,circle,thick,scale=1,label=below:{}] (2) at (1*2,0){ 3};
				\node[draw,circle,thick,scale=1,label=below:{}] (3) at (1*3,0){ 4};
				\node[draw,circle,thick,scale=1,label=below:{}] (4) at (1*4,0){ 3};
								\node[draw,circle,thick,scale=1,label=below:{}] (5) at (5,0){ 2};
												\node[draw,circle,thick,scale=1,label=below:{}] (6) at (6,0){ 1};
																\node[draw,circle,thick,scale=1,label=below:{}] (7) at (3,1){2};
																								
				\draw[thick] (0)--(1)--(2)--(3)--(4)--(5)--(6);
				\draw[thick] (3)--(7);

			\end{tikzpicture}
			\end{array}$}

& $\widetilde{\text{E}}_7$ 
\\
\hline
II$^*$ & {\small  $\begin{matrix}\\  \geq 4\\  \\    \end{matrix}$} & {\small  $5$} & {\small  $10$} & {\small  $0$ }&
{\small
$
\begin{pmatrix}
0& -1\\
1 & 1
\end{pmatrix}
$
}
&
 \scalebox{.7}{
 $\begin{array}{c}
 \begin{tikzpicture}
				\node[draw,circle,thick,scale=1,label=below:{}] (0) at (0,0){ 1};
				\node[draw,circle,thick,scale=1,label=below:{}] (1) at (1*1,0){ 2};
				\node[draw,circle,thick,scale=1,label=below:{}] (2) at (1*2,0){ 3};
				\node[draw,circle,thick,scale=1,label=below:{}] (3) at (1*3,0){ 4};
				\node[draw,circle,thick,scale=1,label=below:{}] (4) at (1*4,0){ 5};
								\node[draw,circle,thick,scale=1,label=below:{}] (5) at (1*5,0){ 6};
												\node[draw,circle,thick,scale=1,label=below:{}] (6) at (1*6,0){ 4};
																\node[draw,circle,thick,scale=1,label=below:{}] (8) at (1*5,1*1){ 3};
																\node[draw,circle,thick,scale=1,label=below:{}] (7) at (1*7,0){ 2};
								
				\draw[thick] (0)--(1)--(2)--(3)--(4)--(5)--(6)--(7);
				\draw[thick] (5)--(8);

			\end{tikzpicture}
			\end{array}$}

& $\widetilde{\text{E}}_8$ 
\\
\hline
\end{tabular}}
\end{center}
\caption{ {  { Kodaira-N\' eron  classification of  geometric fibers over codimension one points of the base of an elliptic fibration \cite{Kodaira.IIandIII, Neron}.  The $j$-invariant of the  I$_0^*$ is never $\infty$ and can take any finite value.
} 
 }
}\label{Table.KodairaTate}
\end{table}
\clearpage

\section{Euler Characteristic of Elliptic Fibrations}
\label{sec:Euler}

 The Euler characteristic of a smooth 
Weierstrass model $\varphi: Y\longrightarrow B$ over a base $B$ 
 is given by the following formula\cite{AE1,AE2} 
 $$
 \chi(Y)=\int\frac{12L}{1+ 6L} c(B),
 $$
 where  $c(B)=c(TB)\cap [B]$  is the total  homological Chern class and $L=c_1(\mathscr{L})$ is the first Chern class of the fundamental line bundle  $\mathscr{L}=(R^1\varphi_*\mathscr{O}_Y)^{-1}$ of the elliptic fibration. This expression is the generating function for the Euler characteristic.  
  Assigning weight $n$ to the $n$th Chern class,  the Euler characteristic of $Y$ is  the component of weight $ d = \text{dim} B$.
  A direct expansion gives 
 $$
 \chi(Y) =-2\sum_{i=1}^d (-6 L)^{i} c_{d-i}(TB)\cap[B].
 $$
The Euler characteristic of an elliptic surface is given by  Kodaira's formula \cite[III, Theorem 12.2, p. 14]{Kodaira.IIandIII}: 
 $$
 \chi(Y)= \sum_i v(\Delta_i),
 $$
 where the discriminant   $\Delta=\sum_i \Delta_i $ is a sum of points $\Delta_i$ and $v(\Delta_i)$ denotes the valuation of $\Delta_i$. 
 In particular, the Euler characteristic of the resolution of a Weierstrass model over a curve is always $12 \int L$:
 $$
 \chi(Y)=\int 12 L.
 $$
 
 There are several different ways to compute the Euler characteristic of an elliptic fibration.  The Euler characteristic (with compact support) is multiplicative on local trivial fibrations  and satisfies the excision property ($\chi(X/Z)=\chi(X)-\chi(Z)$ for any closed $Z\subset X$); moreover,  if $\phi:M\to N$ is a smooth proper morphism, then $\chi(M)=\chi(N)\chi(N_\eta)$ where $\chi(N_\eta)$ is the Euler characteristic of the generic fiber.   
It follows from these properties that the Euler characteristic of an elliptic fibrations gets all its contribution from its discriminant locus since the Euler characteristic of a smooth elliptic curve is zero. 
 One can identify a partition of the discriminant locus by subvarieties $V_i$ over which the generic fiber is constant. The Euler characteristic is then 
 $$
 \chi(Y)=\sum_i \chi(V_i) \chi(Y_{\eta_i}),
 $$
where $Y_{\eta_i}$ is the fiber over the generic point  $\eta_i$ of  $V_i$. This method increases quickly in complexity when the fiber structure becomes more involved \cite{GM1}.

A more effective way to compute the Euler characteristic is to use the   Poincar\'e--Hopf theorem, which asserts that the Euler characteristic  of $X$ equals  the  degree of the top Chern class of the tangent bundle $TX$ evaluated on the homological class of the 
variety. 
In other words,  the Euler characteristic is the degree of the total homological Chern class: 
$$
\chi(X)=\int c(X), \quad c(X):= c(TX)\cap (X).
$$
This method is explained in Section \ref{Sec:Batyrev} and can also be thought of  as an algebraic version of the Chern--Gauss--Bonnet theorem. 
 We give three different proofs  in Appendix \ref{appB.Euler}. 

\subsection{Crepant resolutions and flops} 

Let $X$ be a projective variety with at worst canonical Gorenstein singularities. 
We denote the canonical class by $K_X$. 
\begin{defn}
A birational projective  morphism $\rho:Y\longrightarrow X$ is called a \emph{crepant desingularization} of $X$ if $Y$ is smooth and 
$K_Y=\rho^* K_X$. 
\end{defn}

\begin{defn}
A resolution of singularities of a variety $Y$ is a proper surjective birational morphism $\varphi:\widetilde{Y}\longrightarrow Y$  such that  
$\widetilde{Y}$ is nonsingular
and  $\varphi$ is an isomorphism away  from the singular  locus of $Y$. In other words, $\widetilde{Y}$ is nonsingular and  if $U$ is the singular locus of $Y$, $\varphi$ maps $\varphi^{-1}(Y\setminus U)$  isomorphically  onto $Y\setminus U$.  
 A \emph{crepant resolution of singularities}  is a resolution of singularities such that  $K_Y=f^* K_X$. 
\end{defn}

\begin{rem}
In dimension two, there is one and only one crepant resolution of a variety with canonical singularities.
In dimension three, crepant resolutions of Gorenstein singularities always exist but are usually not unique. 
In dimension four or greater, crepant resolutions are not always possible. However, one can always find a crepant birational morphism from a $\mathbb{Q}$-factorial variety with terminal singularities. 
\end{rem}

\begin{defn}[$D$-flop {(See \cite[p. 156-157]{Matsuki})}]
Let $f_1: X_1\longrightarrow X$ a small contraction. 
Let $D$ be a $\mathbb{Q}$-Cartier divisor in $X_1$. 
A \emph{$D$-flop} is a birational morphism $f:X_1--\rightarrow X_2$ fitting into a triangular diagram where $f_1$ and $f_2$ are birational morphisms 
$$
\begin{tikzcd}[column sep=huge] 
  X_1  \arrow[rightarrow,dashed]{rr} {f}  \arrow[rightarrow]{rd} [below=.1cm]{\displaystyle f_1}  & & X_2  \arrow[rightarrow]{ld} [below=.1cm]{\displaystyle f_2}  \\
  &  X & 
  \end{tikzcd}
  $$
  such that  
\begin{enumerate}
\item $X_i$ are normal varieties with at worst terminal singularities. 
\item $f_i$ are small contractions (i.e. their exceptional loci are in codimension two or higher). 
\item $K_{X_i}$ is numerically trivial along the fibers of $f_i$ (i.e. $K_{X_i} \cdot \ell=0$ for any curve $\ell$ contracted by $f_i$).
\item The $\mathbb{Q}$-divisor $-D$ is $f_1$-ample.
\item  The strict $f$-transform $D^+$ of $D$ is $f_2$-ample.
\end{enumerate}
\end{defn}

\begin{defn}[flop]
The morphism $f_2:X_2\longrightarrow X$ is said to be a {\em flop} of $f_1:X_1\longrightarrow X$ if there exists a divisor $D\subset X_1$ such that  $f_2$ is  a $D$-flop of $f_1$. 
\end{defn}
\subsection{Batyrev's theorem and the Chern class of a crepant resolution} \label{Sec:Batyrev}

We denote the Chow ring of a nonsingular variety $X$ by $A_*(X)$. The free group of generated by subvarieties of dimension $r$ modulo rational equivalence is denoted by  $A_r(X)$. 
The degree of a class $\alpha$ of $A_*(X)$ is denoted by  $\int_X \alpha$ (or simply $\int \alpha$ if there is no ambiguity in the choice of $X$), and is defined to be the degree of its component in $A_0(X)$.
The total homological Chern class $c(X)$ of any nonsingular variety $X$ of dimension $d$ is defined by:
\begin{equation*}
c(X)=c(TX)\cap [X],
\end{equation*}
where $TX$ is the tangent bundle of $X$ and $[X]$ is the class of $X$ in the Chow ring. The degree of $c(X)$ is the topological Euler characteristic of $X$: 
\begin{equation*}
\chi(X)=\int_X c(X).
\end{equation*}
Motivated by string geometry, 
Batyrev and Dais proposed in  \cite[Conjecture 1.3]{Batyrev.Dais} the following conjecture.
\begin{conj}[Batyrev and Dais{, see \cite{Batyrev.Dais}}]
Hodge numbers of smooth crepant resolutions of an algebraic variety defined over the complex numbers with at worse Gorenstein canonical singularities  do not depend on the choice of such a resolution.
\end{conj}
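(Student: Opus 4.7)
The plan is to establish the conjecture via motivic integration on arc spaces, following the strategy proposed by Kontsevich. The key idea is to attach to the singular variety $X$ a canonical invariant -- a \emph{stringy $E$-function} $E_{\mathrm{st}}(X)$ -- that can be computed from any crepant resolution but whose value is manifestly independent of the choice. Specializing this invariant via the Hodge--Deligne realization then gives the desired equality of Hodge numbers for any two crepant resolutions.

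First, I would construct the arc space $J_\infty(Y)$ of a smooth crepant resolution $\rho\colon Y\to X$ and develop a theory of motivic integration taking values in a suitable completion $\widehat{K_0(\mathrm{Var}_{\mathbb{C}})}[\mathbb{L}^{-1}]$ of the Grothendieck ring of varieties localized at the Lefschetz motive $\mathbb{L}=[\mathbb{A}^1]$. For an effective divisor $D$ on $Y$ with simple normal crossings support, the motivic integral $\int_{J_\infty(Y)}\mathbb{L}^{-\mathrm{ord}_D}\, d\mu$ can be computed explicitly as a sum indexed by the strata of $D$, weighted by $\mathbb{L}$-factors that depend only on the multiplicities of the components.

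Next, I would apply this to the relative canonical divisor $K_{Y/X}=K_Y-\rho^*K_X$. Since $\rho$ is crepant, $K_{Y/X}$ vanishes and the motivic integral reduces to the total motivic measure of $J_\infty(Y)$, which recovers the class $[Y]$ in the appropriate Grothendieck ring. The decisive step is Kontsevich's change of variables formula: for any proper birational morphism between smooth varieties, the integrand $\mathbb{L}^{-\mathrm{ord}_{K_{Y/X}}}$ transforms so that the motivic integral depends only on $X$ and not on $\rho$. Consequently, for two crepant resolutions $\rho_i\colon Y_i\to X$ one obtains $[Y_1]=E_{\mathrm{st}}(X)=[Y_2]$ in the completed Grothendieck ring, where $E_{\mathrm{st}}(X)$ is the stringy invariant computed from any log resolution of $X$ via its log-discrepancies.

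Finally, applying the Hodge--Deligne polynomial $E(W;u,v)=\sum_{p,q}(-1)^{p+q}h^{p,q}(W)u^pv^q$, which factors through $K_0(\mathrm{Var}_{\mathbb{C}})$, yields $E(Y_1;u,v)=E(Y_2;u,v)$ and hence equality of the individual Hodge numbers. The main obstacle will be the rigorous construction of motivic integration: defining a measure on the infinite-dimensional arc space requires careful treatment of cylindrical and measurable subsets, convergence must be controlled in a suitable dimensional completion of the Grothendieck ring, and the change of variables formula demands a detailed analysis of how arcs lift under proper birational morphisms together with a log resolution of $X$. An alternative route, once a Weak Factorization Theorem is available, would be to decompose any birational map between two smooth crepant resolutions into a sequence of smooth blowups and blowdowns along centers preserving $K$-equivalence, reducing the problem to tracking Hodge numbers through each elementary step.
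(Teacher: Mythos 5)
The paper does not prove this statement: it appears there only as a conjecture attributed to Batyrev and Dais, and the text goes on to record the two partial results that are actually used later, namely Batyrev's theorem that crepant birationally equivalent smooth varieties have equal Betti numbers (proved by $p$-adic integration and the Weil conjectures) and Kontsevich's theorem for the Calabi--Yau case (proved by motivic integration). So there is no in-paper proof to compare against; your proposal should be judged on its own terms, and on those terms it is essentially the argument by which the conjecture was in fact settled in the literature (Kontsevich's motivic integration as developed by Denef--Loeser, packaged via Batyrev's stringy $E$-function). The outline is sound: arc spaces, the motivic measure valued in a dimensional completion of $K_0(\mathrm{Var}_{\mathbb{C}})[\mathbb{L}^{-1}]$, the change of variables formula, and the Hodge--Deligne realization, which for smooth projective varieties recovers the individual $h^{p,q}$ from the $E$-polynomial.

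One step is stated too loosely and would fail as written: the change of variables formula you invoke is for proper birational morphisms \emph{between smooth varieties}, whereas $\rho\colon Y\to X$ has singular target, so you cannot directly integrate $\mathbb{L}^{-\operatorname{ord}_{K_{Y/X}}}$ over $J_\infty(Y)$ and declare the answer intrinsic to $X$. The standard repair is to take two crepant resolutions $\rho_i\colon Y_i\to X$ and a common smooth resolution $Z$ dominating both, with $g_i\colon Z\to Y_i$ and $h=\rho_i\circ g_i$. Then $K_{Z/Y_i}=K_Z-g_i^*K_{Y_i}=K_Z-h^*K_X$ for $i=1,2$, where the Gorenstein hypothesis is exactly what makes $K_X$ Cartier and hence $h^*K_X$ well defined; so $K_{Z/Y_1}=K_{Z/Y_2}$, and the change of variables formula applied to each $g_i$ gives $\mu(J_\infty(Y_1))=\int_{J_\infty(Z)}\mathbb{L}^{-\operatorname{ord}_{K_{Z/Y_i}}}\,d\mu=\mu(J_\infty(Y_2))$, whence $[Y_1]=[Y_2]$ in the completion and the $E$-polynomials agree. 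With that correction (and the care you already flag about measurability, convergence in the completed ring, and the fact that the $E$-realization factors continuously through the completion), the argument is complete; your alternative route via weak factorization also works but requires controlling the discrepancies of the blowup centers along a $K$-equivalence, which is not lighter than the motivic route.
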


Using $p$-adic integration and the Weil conjecture, Batyrev proved the following slightly weaker proposition: 
 
\begin{thm}[Batyrev, \cite{Batyrev.Betti}]
\label{thm:Batyrev}
Let $X$ and $Y$ be irreducible birational smooth $n$-dimensional projective algebraic varieties 
over $\mathbb{C}$. Assume that there exists a birational rational map $\varphi: X -\,  - \rightarrow Y$ which does not 
change the canonical class. Then $X$ and $Y$ have the same Betti numbers. 
\end{thm}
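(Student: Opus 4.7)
The plan is to follow Batyrev's original strategy: spread everything out over an arithmetic base, reduce modulo primes, compare $p$-adic volumes, and then invoke the Weil conjectures.

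First, I would choose a finitely generated $\mathbb{Z}$-subalgebra $R \subset \mathbb{C}$ over which $X$, $Y$, and the birational map $\varphi$ are all defined; concretely, pick models $\mathcal{X},\mathcal{Y} \to \operatorname{Spec} R$ together with a rational map $\varphi_R: \mathcal{X} \dashrightarrow \mathcal{Y}$ and a choice of open $U \subset X$ (resp.\ $V \subset Y$) on which $\varphi$ is an isomorphism, such that $X \setminus U$ and $Y\setminus V$ have codimension at least two (this uses that $\varphi$ is crepant, hence extracts no divisor). By standard generic smoothness and spreading-out results, for all closed points $\mathfrak{p} \in \operatorname{Spec} R$ outside a proper closed subset, the fibers $X_\mathfrak{p}, Y_\mathfrak{p}$ are smooth, projective, and birational over the residue field $\mathbb{F}_q$, with the crepant condition preserved.

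Next, for such a good $\mathfrak{p}$ with $\mathfrak{p}$-adic completion $R_\mathfrak{p}$ and fraction field $K_\mathfrak{p}$, I would work with the compact $p$-adic analytic manifolds $X_\mathfrak{p}(R_\mathfrak{p})$ and $Y_\mathfrak{p}(R_\mathfrak{p})$. Since $K_X$ is trivialized locally and the crepant hypothesis $K_Y = \varphi^* K_X$ allows us to identify the canonical sheaves on the isomorphic locus, a nowhere vanishing local section $\omega$ of a power of $K_X$ defines a canonical $p$-adic measure on $X_\mathfrak{p}(R_\mathfrak{p})$ (and similarly on $Y_\mathfrak{p}(R_\mathfrak{p})$) via the construction of Weil. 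Because $\varphi$ is an isomorphism outside codimension $\geq 2$, its $p$-adic exceptional locus has measure zero, and the change-of-variables formula (with the crepant identification of forms) gives
$$
\int_{X_\mathfrak{p}(R_\mathfrak{p})} |\omega_X| \;=\; \int_{Y_\mathfrak{p}(R_\mathfrak{p})} |\omega_Y|.
$$

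Third, I would apply Weil's classical computation identifying each of these $p$-adic volumes with $|X_\mathfrak{p}(\mathbb{F}_{q^r})|/q^{rn}$ for each finite extension $\mathbb{F}_{q^r}$ (by stratifying the reduction map and summing contributions from smooth $\mathbb{F}_{q^r}$-points). This yields the equality $|X_\mathfrak{p}(\mathbb{F}_{q^r})| = |Y_\mathfrak{p}(\mathbb{F}_{q^r})|$ for all $r \geq 1$, and hence the local zeta functions of $X_\mathfrak{p}$ and $Y_\mathfrak{p}$ coincide. By Deligne's proof of the Weil conjectures, the Frobenius eigenvalues on $\ell$-adic cohomology have distinct weights in different degrees, so matching zeta functions forces $\dim H^i_{\text{\'et}}(X_\mathfrak{p},\mathbb{Q}_\ell) = \dim H^i_{\text{\'et}}(Y_\mathfrak{p},\mathbb{Q}_\ell)$ for every $i$. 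A comparison theorem relating $\ell$-adic Betti numbers of the special and geometric generic fibers then transfers this to $X$ and $Y$ over $\mathbb{C}$.

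The main obstacle is the $p$-adic change-of-variables step: one must verify carefully that the crepant condition $K_Y = \varphi^* K_X$, originally an equality of divisor classes on a smooth variety over $\mathbb{C}$, actually propagates to an honest equality of volume forms on the isomorphism locus of a good reduction, and that the complement of this locus has $p$-adic measure zero. The codimension-two statement about exceptional loci of crepant birational maps between smooth varieties, together with the genericity arguments needed to control it after reduction mod $\mathfrak{p}$, is the delicate input; once this is in hand, the rest is essentially the Weil-conjecture machinery applied to the equality of point counts.
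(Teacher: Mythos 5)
The paper does not prove Theorem~\ref{thm:Batyrev}: it is quoted verbatim from Batyrev's paper \cite{Batyrev.Betti}, with only the one-line remark that the proof uses $p$-adic integration and the Weil conjectures. Your outline is a faithful reconstruction of that published argument --- spreading out over a finitely generated $\mathbb{Z}$-algebra, comparing canonical $p$-adic volumes at a prime of good reduction, invoking Weil's formula $\int_{X(R_\mathfrak{p})}\mu = |X(\mathbb{F}_q)|/q^{n}$ to equate point counts, and then using purity of Frobenius weights plus smooth proper base change and the Artin comparison theorem to recover the complex Betti numbers --- so there is no divergence from anything the paper actually does. One remark on the single place where your route differs from Batyrev's: he does not compare measures directly on the isomorphism locus $U\cong V$. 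Instead he takes a common resolution $Z\to X$, $Z\to Y$ (Hironaka) and observes that the crepant hypothesis forces $K_{Z/X}=K_{Z/Y}$, so the pulled-back gauge forms $f^*\omega_X$ and $g^*\omega_Y$ have the same divisor and hence the same $p$-adic absolute value up to a unit; the two integrals over $X(R_\mathfrak{p})$ and $Y(R_\mathfrak{p})$ are then both computed on $Z(R_\mathfrak{p})$. This sidesteps exactly the obstacle you flag at the end: the canonical measure depends on the integral model, not just on the $K_\mathfrak{p}$-variety, so an isomorphism $U\cong V$ of generic fibers does not by itself identify the measures, and making your direct comparison rigorous would essentially reproduce the discrepancy computation that the common-resolution argument packages cleanly. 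With that substitution your sketch is Batyrev's proof.
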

 Batyrev's result was strongly inspired by string dualities, in particular by the work of Dixon, Harvey, Vafa, and Witten \cite{Dixon:1986jc}. 
 Kontsevitch proved the Batyrev--Dais conjecture for the special case of Calabi-Yau varieties as a corollary of his newly invented theory of motivic integration; the proof relies on Hodge theory and geometrizes Batyrev's use of $p$-adic integration. 
\begin{thm}[Kontsevitch, \cite{Kontsevich.Orsay}]
\label{thm:Kontsevitch}
Let $X$ and $Y$ be birationally-equivalent smooth Calabi-Yau varieties. Then $X$ and $Y$ have the same Hodge numbers.
\end{thm}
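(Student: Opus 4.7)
The plan is to follow Kontsevich's original argument via motivic integration, exploiting the fact that the Calabi-Yau hypothesis forces the relative canonical divisors on any common resolution to agree. First, I would apply Hironaka's resolution of indeterminacy to the birational map $\varphi: X \dashrightarrow Y$ to produce a smooth projective variety $Z$ together with proper birational morphisms $p: Z \to X$ and $q: Z \to Y$ such that $\varphi \circ p = q$ on the open locus where $p$ is an isomorphism. Writing $K_{Z/X} = K_Z - p^*K_X$ and $K_{Z/Y} = K_Z - q^*K_Y$, the Calabi-Yau assumption $K_X = K_Y = 0$ collapses both relative canonical divisors to the single divisor $K_Z$.

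Second, I would equip the arc space $\mathcal{J}_\infty(X)$ with the motivic measure $\mu_X$ taking values in a completion $\widehat{\mathcal{M}}_\mathbb{C}$ of the localized Grothendieck ring $K_0(\mathrm{Var}_\mathbb{C})[\mathbb{L}^{-1}]$, where $\mathbb{L}=[\mathbb{A}^1_\mathbb{C}]$. The key tool is the change-of-variables formula for a proper birational morphism $f: W \to V$ between smooth varieties: for any measurable function with locally integrable motivic volume,
$$\int_{\mathcal{J}_\infty(V)} \mathbb{L}^{-\mathrm{ord}(\mathcal{I})}\, d\mu_V \;=\; \int_{\mathcal{J}_\infty(W)} \mathbb{L}^{-\mathrm{ord}(\mathcal{I}\cdot\mathcal{O}_W) - \mathrm{ord}(K_{W/V})} \, d\mu_W.$$
Applied to the constant function $1$ on $X$ and on $Y$ via $p$ and $q$ respectively, this yields
$$[X] \;=\; \int_{\mathcal{J}_\infty(Z)} \mathbb{L}^{-\mathrm{ord}(K_{Z/X})}\, d\mu_Z, \qquad [Y] \;=\; \int_{\mathcal{J}_\infty(Z)} \mathbb{L}^{-\mathrm{ord}(K_{Z/Y})}\, d\mu_Z.$$
Since $K_{Z/X} = K_Z = K_{Z/Y}$ by the Calabi-Yau hypothesis, the two integrands coincide, so $[X] = [Y]$ in $\widehat{\mathcal{M}}_\mathbb{C}$.

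Third, I would invoke the Hodge-Deligne (or $E$-polynomial) realization $E: K_0(\mathrm{Var}_\mathbb{C}) \to \mathbb{Z}[u,v]$, defined on a smooth projective variety $W$ by $E(W) = \sum_{p,q}(-1)^{p+q} h^{p,q}(W) u^p v^q$. Because $E(\mathbb{L}) = uv$ is invertible in a suitable completion of $\mathbb{Z}[u,v]$, this realization extends to $\widehat{\mathcal{M}}_\mathbb{C}$, and the equality $[X]=[Y]$ in the completed Grothendieck ring descends to $E(X) = E(Y)$. Since $X$ and $Y$ are smooth projective, the Hodge-Deligne polynomial recovers the ordinary Hodge numbers $h^{p,q}$, so $h^{p,q}(X) = h^{p,q}(Y)$ for all $p,q$, completing the proof.

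The main obstacle, and the bulk of the technical work, is setting up motivic integration rigorously: constructing the $\sigma$-algebra of stable (cylinder) subsets of $\mathcal{J}_\infty(Z)$, defining a well-behaved motivic measure on it with values in $\widehat{\mathcal{M}}_\mathbb{C}$, and proving the change-of-variables formula by a careful stratification of $\mathcal{J}_\infty(Z)$ according to the order of contact with the exceptional locus of $p$ (or $q$). A secondary subtlety is verifying convergence of the motivic integrals in the appropriate completion and checking that the Hodge-Deligne realization indeed extends continuously to this completion; this rests on the fact that $E$ factors through a weight filtration so that the ideal generated by classes of varieties of bounded dimension behaves well under the $\mathbb{L}$-adic topology.
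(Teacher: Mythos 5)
The paper does not prove this theorem: it is quoted verbatim from Kontsevich's 1995 Orsay lecture, and the surrounding text merely records that the proof uses motivic integration to show birational Calabi--Yau varieties have equal classes in the completed Grothendieck ring. Your sketch is exactly that argument (resolution of the birational map, motivic change of variables, equality in $\widehat{\mathcal{M}}_{\mathbb{C}}$, Hodge--Deligne realization) and is correct; the only step you elide is that $K_{Z/X}$ and $K_{Z/Y}$ coincide as actual effective divisors rather than merely as divisor classes, which follows because $p^*\omega_X$ and $q^*\omega_Y$ are pullbacks of nowhere-vanishing volume forms and hence, $H^0(K)$ being one-dimensional, differ by a nonzero constant.
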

As a direct consequence of Batyrev's theorem, the Euler characteristic of a crepant resolution of a variety with Gorenstein canonical singularities is independent on the choice of resolution. 
We identify the Euler characteristic as the degree (see Definition \ref{def:degree}) of the total  (homological) Chern class of a crepant resolution $f: \widetilde{Y }\longrightarrow Y$ of a Weierstrass model $Y\longrightarrow B$:
$$
\chi(\widetilde{Y})=\int c(\widetilde{Y}).
$$
We then use the birational invariance of the degree under the pushfoward to express the Euler characteristic as a class in the Chow ring of the projective bundle $X_0$. We subsequently push this class forward to the base to obtain a rational function depending upon only the total Chern class of the base $c(B)$, the first Chern class $c_1(\mathscr L)$, and the class $S$ of the divisor in $B$:
$$
\chi(\widetilde{Y})=\int_B \pi_* f_* c(\widetilde{Y}).
$$
In view of Theorem \ref{thm:Batyrev}, this Euler characteristic is independent of the choice of a crepant resolution. We discuss pushforwards and their role in the computation of the Euler characteristic in more detail in Section \ref{sec:push}.

\section{Pushforwards and Computing the Euler Characteristic}
\label{sec:push}
\begin{defn}[Pushforward,  {\cite[Chap. 1, p. 11]{Fulton.Intersection}}] 
 Let $f: X\longrightarrow Y$ be a proper morphism. 
Let $V$ be a subvariety of $X$, the image $W=f(V)$ a subvariety of $Y$, and the function field $R(V)$ an extension of the function field $R(W)$. 
The pushforward $f_* : A_*(X)\to A_*(Y)$ is defined as follows
$$
f_* [V]= \begin{cases}
0  &  \text{if} \quad \dim V\neq \dim W,\\
[R(V):R(W)] \   [W]  & \text{if} \quad \dim V= \dim W,
\end{cases}
$$
where $[R(V):R(W)]$ is the degree of the field extension $R(V)/R(W)$. 
\end{defn}
\begin{lem}[{\cite[Chap. 1, p. 13]{Fulton.Intersection}}]  \label{lem:Push} Let $f:X\longrightarrow Y$  be a proper map between varieties. 
 For any class $\alpha$ in the Chow ring $A_*(X)$ of $X$:
$$\int_X \alpha=\int_Y f_* \alpha.$$
\end{lem}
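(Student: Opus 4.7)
The plan is to reduce the statement to the case where $\alpha$ is the class of an irreducible subvariety using linearity of both $f_*$ and the degree map, and then analyze the resulting dimension cases. Since $\int_X$ only extracts the component of $\alpha$ living in $A_0(X)$ (by \cite[Definition 1.4, p.\ 13]{Fulton.Intersection} as recalled in the conventions), and since $f_*$ is a graded map in the sense that it sends $A_k(X)$ to $A_k(Y)$, it suffices to check the identity separately on each homogeneous piece, and in particular to understand what happens on $A_0$.

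First I would write $\alpha = \sum_i n_i [V_i]$ with $V_i \subset X$ irreducible. By $\mathbb{Z}$-linearity of both $f_*$ and $\int$, the identity reduces to showing $\int_X [V] = \int_Y f_*[V]$ for every irreducible subvariety $V \subset X$. Set $W = f(V) \subset Y$; since $f$ is proper, $W$ is closed and irreducible, and $\dim W \le \dim V$.

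Next I would split into three cases according to $\dim V$. If $\dim V > 0$ then $[V] \in A_{\dim V}(X)$ has no component in $A_0(X)$, so $\int_X [V] = 0$; on the other hand, by the definition of $f_*$ recalled above, $f_*[V]$ is either $0$ (when $\dim W < \dim V$) or a multiple of $[W]$ with $\dim W = \dim V > 0$, hence again has no component in $A_0(Y)$, and $\int_Y f_*[V]=0$. If $\dim V = 0$, then $V$ is a closed point of $X$, so $W = f(V)$ is a closed point of $Y$, the residue field extension $R(V)/R(W) = \mathbb{C}/\mathbb{C}$ has degree one, and $f_*[V] = [W]$; both $\int_X [V]$ and $\int_Y [W]$ equal $1$ by definition of the degree of a zero-cycle.

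The argument is essentially bookkeeping, and the only step that requires care is the definitional verification that $f_*$ preserves the grading by dimension and that the degree is well-defined on the $A_0$-component; both are recorded in \cite[Chap.\ 1]{Fulton.Intersection} and underlie the very definition of $f_*$ used here. There is no genuine obstacle, since the result is, in effect, the motivation for the normalization of $f_*$ on cycles of maximal dimension relative to the image: the factor $[R(V):R(W)]$ is precisely chosen so that degrees of zero-cycles are preserved, and the case $\dim V > \dim W$ is forced to give zero so that no spurious contribution to the degree appears.
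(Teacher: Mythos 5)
Your argument is correct, but note that the paper offers no proof of this lemma at all: it is quoted directly from Fulton \cite[Chap.~1]{Fulton.Intersection}, so there is no internal proof to compare against. The textbook route is a one-line appeal to functoriality of proper pushforward: writing $p_X\colon X\to \operatorname{Spec}\mathbb{C}$ and $p_Y\colon Y\to\operatorname{Spec}\mathbb{C}$ for the structure morphisms, one has $p_X=p_Y\circ f$, hence $(p_X)_*=(p_Y)_*\circ f_*$ on $A_*$, and the degree is by definition $(p_X)_*$ followed by the identification $A_0(\operatorname{Spec}\mathbb{C})\cong\mathbb{Z}$ (and zero on higher-dimensional pieces). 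Your case-by-case verification on irreducible $V$ is exactly this functoriality unwound, and it buys a self-contained check at the price of two hypotheses you should make explicit. First, $X$ and $Y$ must be complete for $\int_X$ and $\int_Y$ to be defined and invariant under rational equivalence (the paper's own Definition of degree in Appendix C assumes completeness); without this the reduction to a cycle representative $\sum_i n_i[V_i]$ is not legitimate. Second, your zero-dimensional case uses $\kappa(V)=\kappa(W)=\mathbb{C}$, which is fine here since the paper works over $\mathbb{C}$, but the argument that generalizes to an arbitrary base field is the multiplicativity of residue degrees, $[\kappa(V):k]=[\kappa(V):\kappa(W)]\,[\kappa(W):k]$, which shows $\int_Y f_*[V]=[\kappa(V):\kappa(W)]\,[\kappa(W):k]=\int_X[V]$ directly. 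With those two points flagged, the proof is complete.
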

 Lemma \ref{lem:Push} means that an intersection number in $X$ can be computed in $Y$ through a pushforward. 
 This simple fact  has far-reaching consequences and  characterizes the point of view taken in this paper, as it allows us to express the topological invariants of an elliptic fibration in terms of those of the base. 
\label{Sec:Pushforward}
\subsection{The pushforward theorem}
 A formula for the Chern classes of blowups of a smooth variety along a smooth center was conjectured by Todd and Segre and proven in the general case by 
Porteous \cite{Porteous} using the Riemann-Roch theorem. A proof using  Riemann-Roch ``without denominators'' is presented in   \S 15.4 of \cite{Fulton.Intersection}. 
A proof without Riemann-Roch was derived by Lascu and Scott \cite{Lascu.Scott,Lascu1978}.  A generalization of the formula to potentially singular varieties was obtained by Aluffi \cite{Aluffi_CBU}.

The blowup formula simplifies dramatically when the center of the blowup is a nonsingular complete intersection of nonsingular hypersurfaces meeting transversally. 
Aluffi gives an elegant short proof using functorial properties of Chern classes and Chern classes of bundles of tangent fields with logarithmic zeros:
\begin{thm}[Aluffi, {
\cite[Lemma 1.3]{Aluffi_CBU}}]
\label{Thm:AluffiCBU}
Let $Z\subset X$ be the  complete intersection  of $d$ nonsingular hypersurfaces $Z_1$, \ldots, $Z_d$ meeting transversally in $X$.  Let  $f: \widetilde{X}\longrightarrow X$ be the blowup of $X$ centered at $Z$. We denote the exceptional divisor of $f$  by $E$. The total Chern class of $\widetilde{X}$ is then:
\begin{equation}
c( T{\widetilde{X}})=(1+E) \left(\prod_{i=1}^d  \frac{1+f^* Z_i-E}{1+ f^* Z_i}\right)  f^* c(TX).
\end{equation}
\end{thm}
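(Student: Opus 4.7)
The plan is to exploit the logarithmic tangent sheaves attached to simple normal crossings (SNC) divisors, which behave functorially under blowups along transverse intersections. Set $D := Z_1 + \cdots + Z_d$, which is SNC by the transversality hypothesis; set $\widetilde{Z}_i := f^{\ast} Z_i - E$ for the strict transform of $Z_i$ (a principal divisor relation valid because $Z \subset Z_i$ is cut out by a single equation, forcing $E$ to appear with multiplicity one in $f^{\ast} Z_i$); and set $\widetilde{D} := E + \sum_i \widetilde{Z}_i$, which is SNC on $\widetilde{X}$. The strategy is to (i) compute $c(T_X(-\log D))$ and $c(T_{\widetilde{X}}(-\log \widetilde{D}))$ in closed form, (ii) establish the key isomorphism $f^{\ast} T_X(-\log D) \cong T_{\widetilde{X}}(-\log \widetilde{D})$, and (iii) equate the two Chern class expressions and solve for $c(T\widetilde{X})$.

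For step (i) I would invoke the classical identity, which holds whenever the divisor is SNC on a smooth variety and can be derived by an induction over the components using the residue exact sequence together with the K-theoretic relation $[(\iota_{Z_i})_{\ast}\mathcal{O}_{Z_i}] = [\mathcal{O}_X] - [\mathcal{O}_X(-Z_i)]$:
\begin{equation*}
c(T_X(-\log D)) = \frac{c(TX)}{\prod_{i=1}^{d}(1 + Z_i)}, \qquad c(T_{\widetilde{X}}(-\log \widetilde{D})) = \frac{c(T\widetilde{X})}{(1 + E) \prod_{i=1}^{d}(1 + f^{\ast} Z_i - E)}.
\end{equation*}

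The core obstacle is step (ii), which I would verify in blowup charts and then glue globally. Choose local coordinates $(y_1, \ldots, y_n)$ on $X$ with $Z_i = V(y_i)$ for $i \leq d$; then $T_X(-\log D)$ is freely generated by $y_1 \partial_{y_1}, \ldots, y_d \partial_{y_d}, \partial_{y_{d+1}}, \ldots, \partial_{y_n}$. In a standard chart $(x_1, \ldots, x_n)$ of $\widetilde{X}$ with $y_1 = x_1$, $y_i = x_1 x_i$ for $2 \leq i \leq d$, and $y_j = x_j$ for $j > d$, one computes $E = V(x_1)$ and $\widetilde{Z}_i = V(x_i)$ for $2 \leq i \leq d$; the chain rule then pulls the generators of $T_X(-\log D)$ back to $x_1 \partial_{x_1} - \sum_{i \geq 2} x_i \partial_{x_i}$, to $x_i \partial_{x_i}$ for $2 \leq i \leq d$, and to $\partial_{x_j}$ for $j > d$. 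These span the same $\mathcal{O}_{\widetilde{X}}$-module as the canonical generators of $T_{\widetilde{X}}(-\log \widetilde{D})$, proving the chart-level isomorphism; the remaining blowup charts are handled symmetrically, and an $\mathrm{SL}_d$-equivariance check glues the chart isomorphisms. Combining (i) and (ii) gives
\begin{equation*}
\frac{c(T\widetilde{X})}{(1 + E) \prod_i (1 + f^{\ast} Z_i - E)} = \frac{f^{\ast} c(TX)}{\prod_i (1 + f^{\ast} Z_i)},
\end{equation*}
which rearranges to Aluffi's formula. Transversality enters twice: it makes $D$ and $\widetilde{D}$ SNC so that the log tangent sheaves are locally free, and it underlies the specific local normal form used in step (ii).
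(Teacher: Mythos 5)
Your proof is correct and takes essentially the route the paper attributes to Aluffi: the closed-form total Chern class of the logarithmic tangent bundle along an SNC divisor, combined with the isomorphism $f^{*}T_X(-\log D)\cong T_{\widetilde{X}}(-\log \widetilde{D})$, from which the formula follows by division. One cosmetic remark: no separate gluing or $\mathrm{SL}_d$-equivariance argument is needed, since the isomorphism is the dual of the canonical map $f^{*}\Omega^1_X(\log D)\to \Omega^1_{\widetilde{X}}(\log \widetilde{D})$ (defined globally because $f^{-1}(D)=\widetilde{D}$), and your chart computation merely verifies that this canonical map is an isomorphism.
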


\begin{lem}
\label{lem:symhom}
Let  $f: \widetilde{X}\longrightarrow X$ be the blowup of $X$ centered at $Z$. We denote the exceptional divisor of $f$  by $E$. Then 
\begin{equation}
 f_* E^n=
 (-1)^{d+1} h_{n-d} (Z_1, \cdots, Z_d) Z_1\cdots Z_d,\nonumber
\end{equation}
where $h_i(x_1, \cdots, x_k)$ is the complete homogeneous symmetric polynomial of degree $i$ in $(x_1, \cdots, x_k)$ with the convention that $h_i$ is identically zero for $i<0$ and $h_0=1$.  
\end{lem}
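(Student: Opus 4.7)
The plan is to exploit the product structure of the exceptional divisor of a blowup along a transverse complete intersection, reducing the pushforward $f_* E^n$ to a projective-bundle computation that can be evaluated using Jacobi's identity.

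First I would identify $E$ as a projective bundle over the center $Z$. Since the smooth hypersurfaces $Z_i$ meet transversally along $Z$, the conormal sequence splits and $N_{Z/X} \simeq \bigoplus_{i=1}^d \mathcal{O}_X(Z_i)|_Z$. Hence $E=\mathbb{P}(N_{Z/X})$ is a $\mathbb{P}^{d-1}$-bundle $\pi:E\to Z$, and the restriction of $f$ to $E$ factors through the inclusion $j:Z\hookrightarrow X$ as $j\circ\pi$. Using the self-intersection relation $\mathcal{O}_{\widetilde{X}}(E)|_E\simeq\mathcal{O}_E(-1)$, I write $i^*E=-\zeta$ with $\zeta:=c_1(\mathcal{O}_E(1))$ and $i:E\hookrightarrow\widetilde{X}$ the closed immersion. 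The projection formula then gives $E^n=i_*\bigl((i^*E)^{n-1}\bigr)=(-1)^{n-1}\,i_*\zeta^{n-1}$, so that
$$
f_* E^n = (-1)^{n-1}\, j_*\,\pi_*\zeta^{n-1}.
$$

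The next step is to evaluate $\pi_*\zeta^{n-1}$ via Segre classes. Because $N_{Z/X}$ splits into line bundles with Chern classes $x_i:=Z_i|_Z$, its total Segre class factors as $s(N_{Z/X})=\prod_{i=1}^d (1+x_i)^{-1}$. A termwise partial-fraction expansion of this product identifies its degree-$k$ component with $(-1)^k h_k(x_1,\ldots,x_d)$; this identification is the content, up to reindexing, of Jacobi's identity (Lemma \ref{Jacobi}) applied to the $x_i$. Since on a rank-$d$ projective bundle one has $\pi_*\zeta^{d-1+k}=s_k(N_{Z/X})$, setting $k=n-d$ yields
$$
\pi_*\zeta^{n-1}=(-1)^{n-d}\, h_{n-d}(Z_1|_Z,\ldots,Z_d|_Z),
$$
which vanishes for $n<d$ by the convention $h_m=0$ for $m<0$.

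Finally I would push forward to $X$ via $j_*$. Transversality of the $Z_i$ gives $j_*[Z]=Z_1\cdots Z_d$ in $A_*(X)$, and the projection formula produces $j_*\bigl(h_{n-d}(Z_1|_Z,\ldots,Z_d|_Z)\bigr)=h_{n-d}(Z_1,\ldots,Z_d)\cdot Z_1\cdots Z_d$. Multiplying by the accumulated sign $(-1)^{n-1}(-1)^{n-d}=(-1)^{d+1}$ gives the stated formula. The main obstacle is the coordinated sign tracking: the sign coming from $i^*E=-\zeta$, the sign in expanding $\prod(1+x_i)^{-1}$, and the convention-dependent direction of the tautological bundle on $E$. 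Each ingredient is classical, but correctly aligning them is the step where an error is most likely.
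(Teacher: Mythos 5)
Your argument is correct and follows essentially the same route as the paper: both reduce $f_*E^n$ to the Segre class of $N_ZX=\bigoplus_i\mathscr{O}(Z_i)|_Z$ (the paper by invoking the functoriality identity $f_*\bigl(\tfrac{1}{1+E}\cap[E]\bigr)=\tfrac{1}{c(N_ZX)}\cap[Z]$ directly, you by unpacking it through $E=\mathbb{P}(N_ZX)$, $i^*E=-\zeta$, and $\pi_*\zeta^{d-1+k}=s_k(N_ZX)$) and then read off coefficients from $\prod_i(1+Z_i)^{-1}$, with the same final sign bookkeeping. One small correction: identifying the degree-$k$ part of that product with $(-1)^kh_k$ is just the generating function of the complete homogeneous symmetric polynomials and does not require Jacobi's partial-fraction identity, which enters only in the subsequent step (Lemma \ref{lem:PushE}).
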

\begin{proof}
The exceptional locus of the blowup  of $X$ centered at $Z$ is the projective bundle $\mathbb{P}(N_Z X)$. Let $E=c_1(\mathscr{O}_{\mathbb{P}(N_Z X)}(1))$.  By the  functoriality  properties of Segre classes, we have: 
\begin{equation}\label{Segre}
f_* \frac{1}{1+E}\cap [E] =\frac{1}{c(N_Z X)}\cap[Z]= \prod_{i=1}^d\frac{Z_i}{ 1+ Z_i} ,
\end{equation}
where $N_Z X$ is the normal bundle of $Z$ in $X$. The generating function of complete homogeneous symmetric polynomials in $(x_1, \dots, x_d)$ is $\prod_{\ell=1}^{d}(1-x_\ell t)^{-1}$:
\begin{equation}
\sum_{n=1}^\infty h_n(x_1, \cdots, x_d) t^n = \prod_{\ell=1}^{d} \frac{1}{1-x_\ell t}.\nonumber
\end{equation}
By matching terms of the same dimensions in equation \eqref{Segre}, we can compute $f_* E^n$ in terms of  complete homogeneous symmetric polynomials $h_i(Z_1, \ldots, Z_d)$ in the classes $Z_i$: 
\begin{equation}
 f_* E^n=(-1)^{n-1}[t^n] \left( \prod_{i=1}^d\frac{ tZ_i}{ 1+ t Z_i}\right)=
 (-1)^{d+1} h_{n-d} (Z_1 , \cdots, Z_d) Z_1\cdots Z_d,\nonumber
\end{equation}
where $[t^n] g(t)=g_n$ for a formal series $g(t)=\sum_{i=0}^\infty g_i t^i$  and $h_i$ is identically zero for $i<0$ and $h_0=1$.
\end{proof}
\begin{exmp}
If  $d=2$, we have 
\begin{align}\nonumber
f_* E=0, \quad f_* E^2=-Z_1 Z_2, \quad f_*E^3= -(Z_1 +Z_2)Z_1 Z_2, \quad f_* E^4=-(Z_1^2+Z_2^2+ Z_1 Z_2) Z_1 Z_2.
\end{align}
\end{exmp}

\begin{exmp}
If  $d=3$, we have 
\begin{align}\nonumber
f_* E=0, \quad f_* E^2=0, \quad f_*E^3= Z_1 Z_2 Z_3, \quad f_* E^4=(Z_1+Z_2) Z_1 Z_2 Z_3.
\end{align}
\end{exmp}
 A direct consequence of Theorem \ref{Thm.Jacobi} (Jacobi's identity) and Lemma \ref{lem:symhom} is the following pushforward formula (see \cite{FH2}): 
\begin{lem}\label{lem:PushE}
 Let $Z\subset X$ be the  complete intersection  of $d$ nonsingular hypersurfaces $Z_1$, \ldots, $Z_d$ meeting transversally in $X$.  Let  $f: \widetilde{X}\longrightarrow X$ be the blowup of $X$ centered at $Z$ with exceptional divisor $E$. Then  for any integer $n\geq 0$: 
 \begin{align}\nonumber
 f_* E^n = \sum_{\ell=1}^d Z_\ell^{n} M_\ell, \quad M_\ell=\prod_{\substack{m=1\\
 m\neq \ell}} \frac{Z_m }{ Z_m -Z_\ell}.
\end{align}
 The coefficient $M_\ell$ is the $\ell$-moment of the blowup $f$ defined after Theorem \ref{Thm:Push}.
\end{lem}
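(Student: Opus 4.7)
The plan is to derive Lemma \ref{lem:PushE} by matching the expression for $f_* E^n$ provided by Lemma \ref{lem:symhom} against Jacobi's partial fraction identity (Lemma \ref{Jacobi}). Since both ingredients are already in hand, the proof amounts to a careful algebraic rewriting and a sign check. The main point is that the complete homogeneous symmetric polynomial appearing in Lemma \ref{lem:symhom} is precisely what Jacobi's identity expresses as a sum of rational functions whose denominators match the $\ell$-moments $M_\ell$.

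Concretely, I would first factor the product $Z_1 \cdots Z_d$ appearing in Lemma \ref{lem:symhom} into the moments $M_\ell$. Observe that
\[
M_\ell \;=\; \prod_{m\neq \ell} \frac{Z_m}{Z_m - Z_\ell} \;=\; \frac{Z_1 \cdots Z_d}{Z_\ell}\cdot (-1)^{d-1}\prod_{m\neq \ell}\frac{1}{Z_\ell - Z_m},
\]
so that
\[
\sum_{\ell=1}^d Z_\ell^{n} M_\ell \;=\; (-1)^{d-1}\, Z_1\cdots Z_d \sum_{\ell=1}^d \frac{Z_\ell^{n-1}}{\prod_{m\neq \ell}(Z_\ell - Z_m)}.
\]
Next, I would apply Jacobi's identity (Lemma \ref{Jacobi}) with $r = n - d$, which turns the inner sum into $h_{n-d}(Z_1,\ldots,Z_d)$. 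Since $(-1)^{d-1} = (-1)^{d+1}$, this gives
\[
\sum_{\ell=1}^d Z_\ell^{n} M_\ell \;=\; (-1)^{d+1}\, Z_1\cdots Z_d\; h_{n-d}(Z_1,\ldots,Z_d),
\]
which is exactly the expression for $f_* E^n$ from Lemma \ref{lem:symhom}.

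The one subtlety is the range of validity. Jacobi's identity as stated in Lemma \ref{Jacobi} requires $r \geq 0$, i.e.\ $n \geq d$, so I would treat $0 \leq n \leq d-1$ separately. In this range, $h_{n-d}=0$ by the convention of Lemma \ref{lem:symhom}, so the left-hand side $f_* E^n$ vanishes, and we must independently check that $\sum_\ell Z_\ell^n M_\ell = 0$. This reduces to the classical vanishing $\sum_\ell Z_\ell^{n-1}/\prod_{m\neq\ell}(Z_\ell - Z_m) = 0$ for $n-1 \leq d-2$, which follows from Lagrange interpolation (or equivalently by comparing leading degrees in the partial fraction expansion of $\prod_i 1/(t-Z_i)$).

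I do not anticipate any serious obstacle here: all the real content has been absorbed into Lemma \ref{lem:symhom} and Lemma \ref{Jacobi}. The only thing that requires genuine attention is bookkeeping with the sign $(-1)^{d\pm 1}$ produced by pulling $Z_\ell - Z_m$ out as $-(Z_m - Z_\ell)$ in each of the $d-1$ factors, together with the verification of the low-degree cases $n < d$ where Jacobi's formula must be interpreted via the convention $h_r \equiv 0$ for $r < 0$.
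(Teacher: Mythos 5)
Your proposal is correct and follows essentially the same route as the paper: the paper's proof likewise combines Lemma \ref{lem:symhom} with Jacobi's identity (Lemma \ref{Jacobi}) for $r=n-d$ and then performs exactly the sign and factorization bookkeeping you describe, just written in the opposite direction (starting from $f_*E^n$ and ending at $\sum_\ell Z_\ell^n M_\ell$). Your explicit treatment of the range $0\leq n\leq d-1$ via the Lagrange-interpolation vanishing is a point the paper passes over silently, but it is consistent with the conventions $h_r\equiv 0$ for $r<0$ and does not constitute a different argument.
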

\begin{proof}
\begin{align}
 f_* E^n  & = 
 (-1)^{d+1} h_{n-d} (Z_1 , \cdots, Z_d) Z_1\cdots Z_d  \nonumber  &&\quad \text{(by Lemma \ref{lem:symhom})} \\
&=  (-1)^{d+1}\sum_{\ell=1}^d Z_\ell^{n-1}\Big( \prod^d_{\substack{m=1\\
 m\neq \ell}} \frac{1}{ Z_\ell -Z_m } \Big) Z_1\cdots Z_d  &&\quad\text{ (by Lemma \ref{Jacobi})} \nonumber\\
 &=  (-1)^{d+1}\sum_{\ell=1}^d Z_\ell^{n} \Big( \prod^d_{\substack{m=1\\
 m\neq \ell}} \frac{ Z_m}{ Z_\ell -Z_m } \Big) \nonumber    &&\quad\text{ (by the identity  $Z_1\cdots Z_d=Z_\ell \prod^d_{\substack{m=1\\
 m\neq \ell}}  Z_m$)}  \\
 &=  \sum_{\ell=1}^d Z_\ell^{n} \Big( \prod^d_{\substack{m=1\\
 m\neq \ell}} \frac{ Z_m}{ Z_m -Z_\ell } \Big) \nonumber &&\quad\text{ (since $\prod^d_{\substack{m=1\\
 m\neq \ell}}  \frac{ Z_m}{ Z_\ell -Z_m }=(-1)^{d-1}\prod^d_{\substack{m=1\\
 m\neq \ell}}  \frac{ Z_m}{ Z_m -Z_\ell }$)}
\end{align}
\end{proof}
To compute topological invariants of a blowup, we often have to pushforward analytic expressions of $E$. Let $\widetilde{Q}(t)=\sum_a f^* Q_a t^a$ be a formal power series with $Q_a\in A_*(X)$. 
 The formal series $Q(E)$ is a well-defined element of $A_*(\widetilde{X})$.  
We recall Theorem \ref{Thm:Push}:

\Push*

\begin{proof}

 \begin{align}
  f_*\widetilde{Q}(E) & =f_* \sum_a (f^* Q_a) E^a = \sum_a Q_a f_* E^a = \sum_a Q_a  \sum_{\ell=1}^d Z_\ell^{a} M_\ell = \sum_{a}  \sum_{\ell=1}^d Q_a  Z_\ell^{a} M_\ell  =  \sum_{\ell=1}^d  {Q}(Z_\ell) M_\ell.
 \end{align}
  \end{proof}
 
\subsection{Classes of the blowup centers of crepant resolutions}
\label{sec:centers}

We denote the projective bundle of the Weierstrass model to be $X_0 = \mathbb{P}[\mathscr{O}_B\oplus \mathscr{L}^{\otimes 2} \oplus \mathscr{L}^{\otimes 3}]$ and
the elliptic fibration $\varphi: Y_0 \rightarrow B$ to be the zero-scheme of a section of $\mathscr{O}(3) \otimes \pi^* \mathscr{L}^{\otimes 6}$. We denote by $\mathscr{O}(1)$ the dual of the tautological line bundle of $X_0$. We denote by $H$ the first Chern class of 
$\mathscr{O}(1)$, and by $L$ the first Chern class of $\mathscr{L}$. The elliptic fibration $\varphi: Y_0\longrightarrow B$  is of class $[Y_0]=3H+6\pi^* L$. The classes of the generators of the blowup centers are $Z_i^{(n)}$, where $n$ is the number of the blowup map and $i$ is the number of the center. For example, consider the following blowup: 
\begin{align}
\label{eqn:blowupexample}
\begin{array}{c}
\begin{tikzpicture}
	\node(X0) at (0,0){$X_0$};
	\node(X1) at (2.5,0){$X_1$};
	\node(X2) at (5,0){$X_2$};
	\draw[big arrow] (X1) -- node[above,midway]{$(x,y,s|e_1)$} (X0);	
	\draw[big arrow] (X2) -- node[above,midway]{$(y,e_1|e_2)$} (X1);		
 \end{tikzpicture}
 \end{array}
\end{align}
where  each arrow above denotes a blowup, 
 $V(s)$ is a smooth divisor in $X$,
and where $E_n =  V(e_n)$ is the exceptional divisor of the $n$th blowup. The first exceptional divisor is a projective bundle whose fibers have projective coordinates $[x':y':s']$, where
\begin{equation}\nonumber
x=x' e_1~,~ y=y' e_1~,~ s=s' e_1.
\end{equation}
For notational convenience, we drop the prime superscripts (${}^\prime$) appearing after each blowup.

The classes associated to the center of the first blowup in \eqref{eqn:blowupexample} are:
\begin{equation}\nonumber
Z_1^{(1)}=[x]=H+2\pi^*L, \quad Z_2^{(1)}=[y]=H+3\pi^* L, \quad Z_3^{(1)}=[s]=\pi^*S.
\end{equation}
Likewise, the classes associated to the center of the second blowup are
\begin{equation}\nonumber
Z_1^{(2)}=[y]=f_1^*(H+3\pi^* L)-E_1, \quad Z_2^{(2)}=[e_1]=E_1.
\end{equation}
Let us adapt the above data into a matrix-inspired notation, such that $i$ denote columns and $n$ denotes rows. This notation allows us to read the classes of the blowup center by each row. In this notation, the above results can be expressed as follows:
\begin{equation}\nonumber
Z=
\begin{pmatrix}
Z_1^{(1)} & Z_2^{(1)} & Z_3^{(1)} \\
Z_1^{(2)} & Z_2^{(2)} & 
\end{pmatrix}
=
\begin{pmatrix}
H+2\pi^* L & H+3\pi^* L & \pi^*S \\
f_1^*(H+3\pi^* L)-E_1 & E_1 & 
\end{pmatrix}.
\end{equation}
See Table \ref{Table:BlowCenterZ} for an exhaustive list of the generator classes associated to the blowup centers of the crepant resolutions in Table \ref{tab:blowupcenters}. Note that we streamline our notation by omitting the explicit pullback maps from the expressions for the classes appearing in these tables.

\section{Hodge Numbers of Elliptically Fibered Calabi-Yau Threefolds}\label{sec:Hodge}

 Using motivitic integration, Kontsevich shows in his famous ``String Cohomology'' Lecture at Orsay that birational equivalent Calabi-Yau varieties have the same class in the completed Grothendieck ring \cite{Kontsevich.Orsay}. 
Hence, birational equivalent Calabi-Yau varieties have the same  Hodge-Deligne polynomial, Hodge numbers, and Euler characteristic. 
 In this section, we compute the Hodge numbers of crepant resolutions of Weierstrass models in the case of  Calabi-Yau threefolds.

\begin{thm}[Kontsevich, (see \cite{Kontsevich.Orsay})]
Let $X$ and $Y$ be birational equivalent Calabi-Yau varieties over the complex numbers. Then $X$ and $Y$ have the same Hodge numbers. 
\end{thm}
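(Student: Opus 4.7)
The plan is to follow Kontsevich's motivic integration strategy, which produces a measure on arc spaces with values in a completion of the Grothendieck ring $K_0(\mathrm{Var}_{\mathbb{C}})$. The goal is to show that $[X]=[Y]$ in a suitable localization and completion of this ring, from which equality of Hodge numbers follows by applying the Hodge--Deligne polynomial realization. First I would invoke Hironaka to dominate both $X$ and $Y$ by a single smooth projective variety $W$, fitting into a diagram of proper birational morphisms $f\colon W\to X$ and $g\colon W\to Y$ (one may even arrange, by weak factorization, that $f$ and $g$ are compositions of blowups along smooth centers, although this is not strictly needed). The relative canonical divisors are $K_{W/X}=K_W-f^{*}K_X$ and $K_{W/Y}=K_W-g^{*}K_Y$, and because $X$ and $Y$ are Calabi--Yau we have $K_X\sim 0$ and $K_Y\sim 0$, so both discrepancies coincide with $K_W$.

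Next I would introduce the arc space $J_\infty W$ (the inverse limit of the truncated jet schemes $J_n W$) and the motivic measure $\mu_W$ on its cylinder sets, taking values in the dimensional completion $\widehat{\mathcal{M}}_{\mathbb{C}}$ of $K_0(\mathrm{Var}_{\mathbb{C}})[\mathbb{L}^{-1}]$, where $\mathbb{L}=[\mathbb{A}^1]$. The key ingredient is Kontsevich's change of variables formula: for a proper birational morphism $h\colon W\to Z$ between smooth varieties,
\[
\int_{J_\infty Z}\mathbb{L}^{-F}\,d\mu_Z \;=\; \int_{J_\infty W}\mathbb{L}^{-F\circ h_\infty-\mathrm{ord}(K_{W/Z})}\,d\mu_W .
\]
Applying this to $f$ and $g$ with $F=0$, together with the normalization $\mu_X(J_\infty X)=[X]\,\mathbb{L}^{-\dim X}$ for a smooth proper $X$, gives
\[
[X]\,\mathbb{L}^{-\dim X}=\int_{J_\infty W}\mathbb{L}^{-\mathrm{ord}(K_W)}\,d\mu_W=[Y]\,\mathbb{L}^{-\dim Y},
\]
since $K_{W/X}=K_{W/Y}=K_W$. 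Because $\dim X=\dim Y$ and $\mathbb{L}$ is invertible in $\widehat{\mathcal{M}}_{\mathbb{C}}$, this yields $[X]=[Y]$ in the completed Grothendieck ring.

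Finally, I would apply the Hodge--Deligne polynomial $E\colon K_0(\mathrm{Var}_{\mathbb{C}})\to\mathbb{Z}[u,v]$, defined via Deligne's mixed Hodge structures with compact support and satisfying $E(V)=\sum_{p,q}(-1)^{p+q}h^{p,q}(V)\,u^{p}v^{q}$ whenever $V$ is smooth and projective. This ring homomorphism extends to $K_0(\mathrm{Var}_{\mathbb{C}})[\mathbb{L}^{-1}]$ (with $E(\mathbb{L})=uv$) and descends to the dimensional completion because the filtration is controlled by the weight filtration of the mixed Hodge structure. Therefore $[X]=[Y]$ implies $E(X)=E(Y)$, and smoothness and projectivity of $X,Y$ let us read off $h^{p,q}(X)=h^{p,q}(Y)$ for every pair $(p,q)$.

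The main obstacle is the construction of the motivic measure itself and the verification of the change of variables formula: one must define the topology on $\widehat{\mathcal{M}}_{\mathbb{C}}$ so that cylinder sets are measurable, show that cylinders along the exceptional locus of $h$ have measurable image in $J_\infty Z$, and perform the local jet-lifting computation that produces the factor $\mathbb{L}^{-\mathrm{ord}(K_{W/Z})}$ fiber by fiber. Once this machinery is in place, the argument above is essentially a three-line calculation; all the genuine content is packaged into the construction of $\mu$ and the extension of $E$ to the completion.
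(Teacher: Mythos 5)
Your proposal is not an alternative to the paper's argument: the paper does not prove this theorem at all, but quotes it from Kontsevich's Orsay lecture and summarizes the intended proof in \S 4 exactly along the lines you follow (equality of classes in the completed Grothendieck ring via motivic integration, hence equality of Hodge--Deligne polynomials, hence of Hodge numbers). So your sketch is a correct expansion of the route the paper cites, and its architecture --- common smooth resolution, change of variables on arc spaces, $E$-polynomial realization --- is the standard and correct one.

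There is one step where your write-up glosses a genuine issue. From $K_X\sim 0$ and $K_Y\sim 0$ you conclude that ``both discrepancies coincide with $K_W$,'' but this only yields the \emph{linear equivalence} $K_{W/X}\sim K_{W/Y}$, whereas the two integrals $\int_{J_\infty W}\mathbb{L}^{-\mathrm{ord}(K_{W/X})}\,d\mu_W$ and $\int_{J_\infty W}\mathbb{L}^{-\mathrm{ord}(K_{W/Y})}\,d\mu_W$ depend on the order functions of the \emph{actual} effective divisors $\mathrm{div}(\mathrm{Jac}\,f)$ and $\mathrm{div}(\mathrm{Jac}\,g)$; linearly equivalent divisors give different integrands, so the displayed chain of equalities does not yet follow. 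The repair is short but must be said: since $K_X\cong\mathscr{O}_X$, the space $H^0(W,K_W)=H^0(X,K_X)$ is one-dimensional (plurigenera are birational invariants), so the two nonzero global $n$-forms $f^*\omega_X$ and $g^*\omega_Y$ on $W$ are proportional, and hence $K_{W/X}=\mathrm{div}(f^*\omega_X)$ and $K_{W/Y}=\mathrm{div}(g^*\omega_Y)$ are equal as divisors, not merely linearly equivalent. With that inserted your computation is valid. A more minor imprecision: the $E$-polynomial does not extend to $\widehat{\mathcal{M}}_{\mathbb{C}}$ with values in $\mathbb{Z}[u,v]$ but in a suitable completion of $\mathbb{Z}[u,v,(uv)^{-1}]$; this is harmless here because $E(X)$ and $E(Y)$ are honest polynomials being compared inside that completion.
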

\begin{rem}
In Kontsevich's theorem, a Calabi-Yau variety is a nonsingular complete projective variety of dimension $d$ with a trivial canonical divisor. 
To compute Hodge numbers in this section, we use the following stronger definition of a Calabi-Yau variety.
\end{rem}

\begin{defn}\label{defn:CY}
A \emph{Calabi-Yau variety} is a smooth compact projective variety $Y$ of dimension $n$ with a trivial canonical class and such that  $H^i(Y,\mathscr{O}_X)=0$ for $1\leq i\leq n-1$.
\end{defn}

 We first recall some basic definitions and relevant classical theorems.  

\begin{defn}
The \emph{N\'eron-Severi group $\mathrm{NS}(X)$} of a variety $X$ is the group of divisors of $X$ modulo algebraic equivalence. 
The rank of the N\'eron-Severi group of $X$ is called the \emph{Picard number} and is denoted $\rho(X)$. 
\end{defn}
\begin{thm}[Lefschetz $(1,1)$-{theorem}, {see \cite[Theorem 7.2, p. 157]{Voisin.Hodge} }]
If $X$ is compact K\"ahler manifold, then the map  $c_1: Pic(X)\to H^{1,1}(X,\mathbb{Z})=H^{1,1}(X,\mathbb{C})\cap H^2(X,\mathbb{Z})$ is   well-defined and surjective. 
In addition,  the Picard number $\rho(X)$  is equal to the Hodge number $h^{1,1}(X):=\mathrm{dim}\  H^{1,1}(X,\mathbb{Z})$. 
\end{thm}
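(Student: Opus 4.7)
The plan is to deduce the theorem from the long exact sequence associated to the exponential sheaf sequence
\begin{equation*}
0 \longrightarrow \underline{\mathbb{Z}} \longrightarrow \mathscr{O}_X \xrightarrow{\exp(2\pi i\,\cdot\,)} \mathscr{O}_X^\times \longrightarrow 0
\end{equation*}
on the compact K\"ahler manifold $X$. First I would recall the identification $\mathrm{Pic}(X) \cong H^1(X, \mathscr{O}_X^\times)$ via \v{C}ech cocycles of transition functions, and verify that under this isomorphism the topological first Chern class of a line bundle coincides with the connecting homomorphism $\delta : H^1(X, \mathscr{O}_X^\times) \to H^2(X, \mathbb{Z})$. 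This already shows that $c_1$ is well defined on $\mathrm{Pic}(X)$ and, by exactness, that its image lies in the kernel of the induced map $H^2(X, \mathbb{Z}) \to H^2(X, \mathscr{O}_X)$.

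The heart of the argument is the identification of this kernel with $H^{1,1}(X, \mathbb{Z}) := H^{1,1}(X, \mathbb{C}) \cap H^2(X, \mathbb{Z})$. For this I would invoke two standard facts available for compact K\"ahler manifolds: the Dolbeault isomorphism $H^q(X, \mathscr{O}_X) \cong H^{0,q}(X)$, and the Hodge decomposition $H^2(X, \mathbb{C}) = H^{2,0}(X) \oplus H^{1,1}(X) \oplus H^{0,2}(X)$. Tensoring the long exact sequence with $\mathbb{C}$ and chasing the diagram, the map $H^2(X, \mathbb{Z}) \otimes \mathbb{C} \to H^2(X, \mathscr{O}_X)$ is identified with projection onto the $(0,2)$-summand. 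An integral (hence real) class $\alpha \in H^2(X, \mathbb{Z})$ satisfies $\overline{\alpha^{2,0}} = \alpha^{0,2}$, so its $(0,2)$-component vanishes iff its $(2,0)$-component vanishes, iff $\alpha$ is of pure Hodge type $(1,1)$. Exactness therefore gives $\mathrm{im}(c_1) = H^{1,1}(X, \mathbb{Z})$, which is the asserted surjectivity.

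For the second assertion, I would read off $\ker(c_1)$ from the same sequence: it equals $H^1(X, \mathscr{O}_X)/H^1(X, \mathbb{Z})$, which is the complex torus $\mathrm{Pic}^0(X)$, the identity component of $\mathrm{Pic}(X)$. Consequently
\begin{equation*}
\mathrm{NS}(X) = \mathrm{Pic}(X)/\mathrm{Pic}^0(X) \;\cong\; \mathrm{im}(c_1) \;=\; H^{1,1}(X, \mathbb{Z}),
\end{equation*}
and comparing ranks yields $\rho(X) = \dim H^{1,1}(X, \mathbb{Z})$ in the sense defined. The main technical obstacle is verifying the two compatibilities: that the connecting homomorphism of the exponential sequence really computes the topological $c_1$, and that the composition $H^2(X, \mathbb{Z}) \to H^2(X, \mathbb{C}) \to H^2(X, \mathscr{O}_X)$ agrees with projection onto $H^{0,2}(X)$ under the Hodge decomposition. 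Both ultimately rest on the comparison isomorphisms between \v{C}ech, singular, and Dolbeault cohomology on $X$, and are precisely the points at which the K\"ahler hypothesis enters in an essential way.
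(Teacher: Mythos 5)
Your argument is correct and is exactly the classical proof of the Lefschetz $(1,1)$-theorem via the exponential sheaf sequence; the paper itself gives no proof, citing Voisin, and the proof in that reference is the same one you outline (connecting homomorphism computes $c_1$, its image is the kernel of $H^2(X,\mathbb{Z})\to H^2(X,\mathscr{O}_X)\cong H^{0,2}(X)$, and reality of integral classes forces that kernel to be $H^{1,1}(X,\mathbb{Z})$). The only point you use without comment is the identification of divisors modulo algebraic equivalence with $\mathrm{Pic}(X)/\mathrm{Pic}^0(X)$, which is standard in the projective setting in which the paper applies the theorem.
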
 
\begin{thm}[Noether's formula]
 If $B$ is a smooth compact, connected, complex surface with canonical class $K_B$ and Euler number  $c_2$:
$$
\chi(\mathscr{O}_B)=1-h^{0,1}(B)+h^{0,2}(B),\quad    \chi(\mathscr{O}_B)=\frac{1}{12}(K^2+c_2).
$$
\end{thm}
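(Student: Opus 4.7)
The plan is to establish the two equalities separately: the first is a direct unpacking of the definition of the holomorphic Euler characteristic combined with Dolbeault's theorem, while the second is an application of the Hirzebruch--Riemann--Roch theorem to the structure sheaf on a surface.

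First, I would prove the identity $\chi(\mathscr{O}_B)=1-h^{0,1}(B)+h^{0,2}(B)$. By definition, the holomorphic Euler characteristic of $\mathscr{O}_B$ is
$$\chi(\mathscr{O}_B)=\sum_{i=0}^{\dim B}(-1)^i\dim H^i(B,\mathscr{O}_B).$$
Since $\dim B=2$ and Dolbeault's theorem identifies $H^i(B,\mathscr{O}_B)\cong H^{0,i}(B)$, the sum becomes $h^{0,0}(B)-h^{0,1}(B)+h^{0,2}(B)$. The connectedness of $B$ forces $h^{0,0}(B)=1$, giving the first identity.

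Second, I would derive Noether's formula from Hirzebruch--Riemann--Roch applied to the structure sheaf:
$$\chi(\mathscr{O}_B)=\int_B \mathrm{ch}(\mathscr{O}_B)\,\mathrm{td}(TB).$$
Since $\mathrm{ch}(\mathscr{O}_B)=1$, only the degree-two part of the Todd class contributes to the integral over the surface. Expanding,
$$\mathrm{td}(TB)=1+\tfrac{1}{2}c_1(TB)+\tfrac{1}{12}\bigl(c_1(TB)^2+c_2(TB)\bigr),$$
so
$$\chi(\mathscr{O}_B)=\tfrac{1}{12}\int_B\bigl(c_1(TB)^2+c_2(TB)\bigr).$$
The adjunction-type identity $K_B=-c_1(TB)$ gives $c_1(TB)^2=K_B^2$, and the Poincar\'e--Hopf theorem recalled in Section \ref{Sec:Batyrev} identifies $\int_B c_2(TB)=c_2$ with the topological Euler number of $B$. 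Combining these yields $\chi(\mathscr{O}_B)=\tfrac{1}{12}(K_B^2+c_2)$.

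The only nontrivial input is the Hirzebruch--Riemann--Roch theorem itself, which I would invoke as a black box; all remaining steps are direct substitutions of classical identities (Dolbeault, adjunction, Poincar\'e--Hopf). There is no genuine obstacle here, as the statement is a standard theorem in the Enriques--Kodaira theory of complex surfaces; the proof is simply the specialization of HRR to rank-one coefficients in complex dimension two.
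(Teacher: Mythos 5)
Your proof is correct. Note, however, that the paper states Noether's formula as a classical fact and offers no proof of its own, so there is nothing to compare against directly; the theorem is only used as input to Lemma \ref{lem:NoetherRational}. Your derivation is the standard one: the first identity is the definition of $\chi(\mathscr{O}_B)$ combined with Dolbeault's isomorphism $H^i(B,\mathscr{O}_B)\cong H^{0,i}(B)$ and $h^{0,0}=1$ from compactness and connectedness, and the second is Hirzebruch--Riemann--Roch applied to $\mathscr{O}_B$ with the degree-two Todd term $\tfrac{1}{12}(c_1^2+c_2)$. This is entirely consonant with the machinery the paper itself deploys in Appendix \ref{appB.Euler}, where HRR and Lemma \ref{Lem:Hirzebruch} are used to prove $\chi(X)=\int c(X)$. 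One small terminological point: the identity $K_B=-c_1(TB)$ is not really an adjunction statement but simply the definition of the canonical class as $c_1$ of the canonical bundle $\bigwedge^2 T^*B$; the substance of your argument is unaffected.
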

When $B$ is a smooth compact rational surface, we have a simple  expression of   $h^{1,1}(B)$ as a function of $K^2$ using the following lemma. 
\begin{lem}\label{lem:NoetherRational}
Let $B$ be a smooth compact rational surface with  canonical class $K$. Then 
\begin{equation}
h^{1,1}(B)=10-K^2.
\end{equation}
\end{lem}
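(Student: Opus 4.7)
The plan is to combine Noether's formula with the vanishing of the irregularity and geometric genus for a rational surface, and to extract $h^{1,1}$ from the topological Euler number via the Hodge decomposition.

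First, I would recall that for a smooth compact rational surface $B$ the Hodge numbers $h^{0,1}(B)$ and $h^{0,2}(B)$ both vanish: these are birational invariants of smooth projective surfaces, so it suffices to check them on $\mathbb{P}^2$ (or on any minimal rational surface), where both are zero. Consequently Noether's formula
\begin{equation*}
\chi(\mathscr{O}_B)=1-h^{0,1}(B)+h^{0,2}(B)=\tfrac{1}{12}(K^2+c_2)
\end{equation*}
collapses to $12=K^2+c_2(B)$, i.e.\ $c_2(B)=12-K^2$.

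Next I would relate $c_2(B)$ to $h^{1,1}(B)$ through the topological Euler characteristic. Since $B$ is compact K\"ahler, $c_2(B)$ integrated over $B$ equals the topological Euler number, which decomposes as
\begin{equation*}
\chi(B)=\sum_{i=0}^{4}(-1)^i b_i(B)=\sum_{p,q}(-1)^{p+q}h^{p,q}(B).
\end{equation*}
Using $h^{0,0}=h^{2,2}=1$, $h^{1,0}=h^{0,1}=0$, $h^{2,0}=h^{0,2}=0$, and the Serre-duality/conjugation identities $h^{2,1}=h^{0,1}=0$ and $h^{1,2}=h^{1,0}=0$, this reduces to $\chi(B)=2+h^{1,1}(B)$.

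Combining the two computations yields $h^{1,1}(B)=c_2(B)-2=(12-K^2)-2=10-K^2$, which is the claimed identity. The only genuine input beyond bookkeeping is the vanishing $h^{0,1}(B)=h^{0,2}(B)=0$ for rational surfaces; the rest is Noether's formula and the Hodge decomposition, so I expect no real obstacle — the statement is essentially a one-line consequence of Noether's formula once one observes that on a rational surface only $h^{0,0}$, $h^{1,1}$ and $h^{2,2}$ contribute to $\chi(B)$.
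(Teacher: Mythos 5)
Your proposal is correct and follows exactly the same route as the paper: vanishing of $h^{0,1}$ and $h^{0,2}$ for a rational surface, the identity $c_2(B)=2+h^{1,1}(B)$ from the Hodge decomposition of the Euler number, and Noether's formula. You have merely written out in more detail the steps the paper compresses into two lines.
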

\begin{proof}
Since $B$ is a rational surface, $h^{0,1}(B)=h^{0,2}(B)=0$. Hence $c_2=2+h^{1,1}(B)$ and the  lemma follows from  Noether's formula. 
\end{proof}

We now compute  $h^{1,1}(Y)$ using the Shioda-Tate-Wazir theorem. 
\begin{thm}[{Shioda--Tate--Wazir; see  \cite[Corollary 4.1]{Wazir}}]\label{Thm:STW}
Let $\varphi:Y\rightarrow B$ be a smooth elliptic fibration, then 
$$
\rho(Y)=\rho(B)+f+\mathrm{rank}(\text{MW} (\varphi))+1
$$
where $f$ is the number of geometrically irreducible fibral divisors not touching the zero section.
\end{thm}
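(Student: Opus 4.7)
The plan is to exhibit $NS(Y)\otimes\mathbb{Q}$ as an internal direct sum of a \emph{trivial lattice} $T\otimes\mathbb{Q}$, arising from the geometry of the fibration, and a subspace isomorphic to $\mathrm{MW}(\varphi)\otimes\mathbb{Q}$. I would define $T\subset NS(Y)$ as the subgroup generated by three families of divisors: (i) the pullbacks $\varphi^{*}D$ for $D\in NS(B)$, (ii) the image of the zero section $\sigma_{0}(B)$, and (iii) the irreducible fibral divisors that do not meet $\sigma_{0}$. Granting that these generators are linearly independent, $T$ has rank $\rho(B)+1+f$, and the theorem follows from an exact sequence
$$0\longrightarrow T\longrightarrow NS(Y)\longrightarrow \mathrm{MW}(\varphi)\longrightarrow 0$$
valid modulo torsion.

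To establish independence, I would use intersection theory: pullbacks $\varphi^{*}D$ intersect any curve lying in a fiber trivially, so any relation involving them must itself descend to a relation on $B$, accounting for the $\rho(B)$ contribution; the zero section $\sigma_{0}(B)$ is separated from the family in (iii) because $\sigma_{0}\cdot F_{i}=0$ for each fibral divisor $F_{i}$ not meeting $\sigma_{0}$, while intersecting $\sigma_{0}$ with itself gives a nonzero class related to $-K_{B}$ via the adjunction-type formula for sections. Inside each reducible codimension-one fiber, a generalization of Zariski's lemma shows that the irreducible components not meeting $\sigma_{0}$ span a negative-definite sublattice, so they contribute independently.

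Next, I would construct the surjection $NS(Y)\twoheadrightarrow \mathrm{MW}(\varphi)$ as the restriction-to-generic-fiber map. The generic fiber $Y_{\eta}$ is a smooth elliptic curve over $k(B)$ with marked origin $\sigma_{0}\vert_{\eta}$, so $\mathrm{Pic}(Y_{\eta})\cong\mathbb{Z}\oplus Y_{\eta}(k(B))$ where the $\mathbb{Z}$ records degree. Composing restriction with projection to the degree-zero part yields a homomorphism $NS(Y)\to\mathrm{MW}(\varphi)$ which is surjective because every $k(B)$-point of $Y_{\eta}$ extends uniquely to a section of the smooth fibration $\varphi$, giving a divisor class on $Y$ that maps to it.

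Finally, I would identify the kernel of this map with $T$. A class whose restriction to $Y_{\eta}$ is trivial is, after subtracting a multiple of $\sigma_{0}$ to kill the degree, represented by a divisor supported on the preimage of a proper closed subset of $B$, hence on fibral divisors sitting over a finite union of prime divisors $S_{i}\subset B$; components meeting $\sigma_{0}$ can be absorbed (one per component of the discriminant) into a pullback from $B$, leaving precisely the generators in (iii). Taking ranks in the exact sequence yields
$$\rho(Y)=\mathrm{rank}(T)+\mathrm{rank}\,\mathrm{MW}(\varphi)=\rho(B)+1+f+\mathrm{rank}\,\mathrm{MW}(\varphi).$$
The main obstacle is this last identification of the kernel: one must carefully handle the interplay between components of reducible fibers meeting the zero section and the pullback sublattice, which is where the hypothesis that $\varphi$ is a \emph{smooth} elliptic fibration (so that the Néron-model-style decomposition of fibers holds globally over codimension-one loci) enters decisively.
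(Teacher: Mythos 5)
The paper does not prove this statement at all: it is quoted verbatim from Wazir \cite[Corollary 4.1]{Wazir} and used as a black box, so there is no internal proof to compare against. Your sketch is essentially a reconstruction of the Shioda--Tate argument in the form Wazir extends it to higher-dimensional bases, and the overall architecture is right: the trivial lattice $T$ generated by $\varphi^{*}NS(B)$, the zero section, and the fibral divisors missing $\sigma_{0}$; the exact sequence $0\to T\to NS(Y)\to \mathrm{MW}(\varphi)\to 0$ modulo torsion via restriction to the generic fiber and the identification $\mathrm{Pic}^{0}(Y_{\eta})\cong Y_{\eta}(k(B))$; and the rank count. Two points deserve more care than you give them. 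First, your surjectivity argument asserts that every $k(B)$-point of $Y_{\eta}$ ``extends uniquely to a section'' of $\varphi$; for $\dim B\geq 2$ this is false --- a rational section need not extend to a morphism on all of $B$ --- but the conclusion survives because the Zariski closure of the point is already a prime divisor on $Y$ restricting to that point on $Y_{\eta}$, which is all the surjection needs. Second, the linear independence of the fibral components not meeting $\sigma_{0}$ (your ``generalization of Zariski's lemma'' giving negative definiteness fiberwise over each prime divisor of the discriminant) is the genuinely nontrivial technical input in Wazir's paper and cannot be waved at in a complete write-up; relatedly, note that ``smooth elliptic fibration'' in the statement means the total space $Y$ is smooth, not that $\varphi$ is a smooth morphism (in which case $f=0$ and the theorem is vacuous), so the codimension-one N\'eron-type decomposition you invoke at the end is a consequence of smoothness of $Y$ plus properness, not a separate hypothesis. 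With those caveats, the proposal is a faithful outline of the standard proof that the paper chose to cite rather than reproduce.
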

\begin{thm}\label{Thm:STW2} Let $Y$ be a smooth Calabi-Yau threefold  elliptically fibered over a smooth variety $B$ with Mordell-Weil group of rank zero. Then,
\begin{equation}\nonumber
h^{1,1}(Y)=h^{1,1}(B)+f+1, \quad h^{2,1}(Y)=h^{1,1}(Y)-\frac{1}{2}\chi(Y),
\end{equation}
where $f$ is the number of geometrically irreducible fibral divisors not touching the zero section. In particular, if $Y$ is a $G$-model with $G$ a simple group, $f$ is the rank of $G$. 
\end{thm}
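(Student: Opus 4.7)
The plan is to prove the two formulae separately, as each relies on different ingredients. For the first identity, I would combine the Shioda--Tate--Wazir theorem (Theorem \ref{Thm:STW}) with the Lefschetz $(1,1)$-theorem applied twice. Since $Y$ is a smooth projective Calabi--Yau threefold it is K\"ahler, so Lefschetz gives $\rho(Y) = h^{1,1}(Y)$; the same theorem applied to the smooth projective base yields $\rho(B) = h^{1,1}(B)$. Substituting these identifications into Theorem \ref{Thm:STW} and using the hypothesis $\mathrm{rank}(\mathrm{MW}(\varphi)) = 0$ immediately produces $h^{1,1}(Y) = h^{1,1}(B) + f + 1$.

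For the second identity, the strategy is to exploit the rigid Hodge diamond of a Calabi--Yau threefold. Starting from Definition \ref{defn:CY}, I would combine $H^i(Y, \mathscr{O}_Y) = 0$ for $1 \leq i \leq 2$ with Serre duality (valid because $K_Y = 0$) and Hodge symmetry $h^{p,q} = h^{q,p}$ to conclude that the only nonzero Hodge numbers of $Y$ are $h^{0,0} = h^{3,3} = h^{3,0} = h^{0,3} = 1$, $h^{1,1} = h^{2,2}$, and $h^{2,1} = h^{1,2}$. Plugging these into the topological Euler characteristic
$$\chi(Y) = \sum_{p,q=0}^{3} (-1)^{p+q} h^{p,q}(Y)$$
collapses the sum to $\chi(Y) = 2(h^{1,1}(Y) - h^{2,1}(Y))$, which rearranges to the asserted identity.

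Finally, to identify $f$ with $\mathrm{rank}(G)$ when $G$ is simple, I would recall that a $G$-model has a single irreducible component $S$ of the discriminant over which the geometric generic fiber is reducible with dual graph the untwisted affine Dynkin diagram of $\mathfrak{g}^\vee$. Each node corresponds to a geometrically irreducible fibral divisor; there are $\mathrm{rank}(\mathfrak{g}^\vee) + 1 = \mathrm{rank}(G) + 1$ such divisors in total, and exactly one of them --- the affine node of multiplicity one, corresponding to the component that the smooth nearby elliptic fiber degenerates to --- is touched by the zero section of the Weierstrass model. Subtracting this node leaves $f = \mathrm{rank}(G)$.

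The main subtlety I would need to handle carefully is the bookkeeping in the non-simply-laced case: the generic fiber over $S$ has dual graph $\widetilde{\mathfrak{g}}^t$, whereas the geometric generic fiber has dual graph the untwisted affine diagram of $\mathfrak{g}^\vee$ (cf.\ Figure \ref{Fig:AffineLieAlgebras} and Table \ref{Table:DualGraph}). Since Shioda--Tate--Wazir counts geometrically irreducible fibral divisors, it is the latter count that enters, and the final answer $\mathrm{rank}(G) + 1 - 1 = \mathrm{rank}(G)$ is insensitive to whether $\mathfrak{g}$ is simply laced or not.
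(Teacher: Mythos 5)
Your proof of the two displayed formulas is correct and follows the paper's own route exactly: the Shioda--Tate--Wazir theorem combined with the Lefschetz $(1,1)$-theorem gives the first identity, and the Calabi--Yau Hodge diamond (which the paper invokes as the one-line fact $\chi(Y)=2(h^{1,1}-h^{2,1})$ without the Serre-duality/Hodge-symmetry details you supply) gives the second. The one slip is in your closing paragraphs: for non-simply-laced $\mathfrak{g}$ the geometric generic fiber over $S$ does \emph{not} have dual graph the untwisted affine diagram of $\mathfrak{g}^\vee$ --- for a $\mathrm{G}_2$-model it is a Kodaira fiber of type $\mathrm{I}_0^{*}$ with dual graph $\widetilde{\mathrm{D}}_4$ (five nodes), and monodromy identifies components so that only the three irreducible fibral divisors corresponding to the nodes of $\widetilde{\mathrm{G}}_2^{t}$ survive; the count entering Shioda--Tate--Wazir is this arithmetic count, $\mathrm{rank}(G)+1$ nodes of $\widetilde{\mathfrak{g}}^{t}$, not the node count of the geometric dual graph. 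Your final answer $f=\mathrm{rank}(G)$ is numerically unaffected because $\widetilde{\mathfrak{g}}^{t}$ also has $\mathrm{rank}(\mathfrak{g})+1$ nodes, and in any case the paper does not prove this last assertion but delegates it to a citation of \cite{Morrison:1996pp}.
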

\begin{proof}
In the statement of the Shioda--Tate--Wazir theorem, we can replace  the Picard numbers $\rho(Y)$ and $\rho(B)$ by the Hodge numbers $h^{1,1}(Y)$ and $h^{1,1}(B)$ using  Lefschetz's (1,1)-theorem. That gives 
$h^{1,1}(Y)=h^{1,1}(B)+f+1$.
Since the Euler characteristic of a Calabi-Yau threefold is 
$
\chi(Y)=2(h^{1,1}-h^{2,1}),
$ 
and assuming that both $\chi(Y)$ and $h^{1,1}(Y)$ are known, it follows that 
$
h^{2,1}(Y)=h^{1,1}(Y)-\frac{1}{2}\chi(Y).
$
 \end{proof}
\begin{rem}
For $G$-models with $G$ a simple group, $f$ will be the rank of $G$ \cite[\S 4]{Morrison:1996pp}. 
\end{rem}

 \section{An Illustrative Example: SU(2)-Models}
 \label{sec:example}

In this section, we discuss in detail the computation of the Euler characteristic of SU(2)-models. Note that the results presented in this section are equivalent for each of the four possible Kodaira fibers (namely, types I$_2^\text{s}$, I$_2^\text{s}$, I$_3^\text{s}$,  
 III, IV$^\text{ns}$) realizing an SU(2)-model; see Section \ref{sec:results} for a list of the Weierstrass equations defining the various SU(2)-models. We find 
\begin{align}
c(X_0)&=(1+H)(1+H+3\pi^* L)(1+H+2\pi^* L) c(B)\nonumber\\
c(Y_0)&=(3H+6\pi^* L)\frac{c(X_0)}{1+3H+6\pi^* L}.\nonumber
\end{align}
The singular elliptic fibration is resolved by a unique blowup with center $(x,y,s)$ \cite{ESY1}. We denote the blowup by  $f: X_1 \longrightarrow X_0$ and the exceptional divisor by  $E_1$.
The center is a complete intersection of hypersurfaces $V(x)$, $V(y)$, and $V(s)$, whose classes are respectively 
\begin{equation}\nonumber
 Z_1= 2\pi^*L+H, \quad Z_2= 3\pi^* L+H, \quad Z_3=\pi^* S.
\end{equation}
The proper transform  of the elliptic fibration $Y_0$ is denoted $Y$, and is obtained from the total transform of $Y$ by removing $2E_1$. It follows that the class of $Y$ in $X_1$ is 
$$
[Y]=[f^*(3H+6 \pi^* L)-2 E_1]\cap[X_1]
$$
Moreover, we have the following Chern classes:
\begin{align} \nonumber
c(TX_1)&=(1+E_1)\frac{(1+f^* Z_1-E_1)(1+f^* Z_2 -E_1)(1+f^* Z_3-E_1)}{(1+f^* Z_1)(1+f^* Z_2)(1+f^* Z_3)} f^* c(TX_0)\\
c(TY)&=\frac{(1+E_1)(1+f^* Z_1-E_1)(1+f^* Z_2 -E_1)(1+f^* Z_3-E_1)}{(1+3H+6L-2E_1)(1+f^* Z_1)(1+f^* Z_2)(1+f^* Z_3)} f^* c(TX_0)\nonumber
\end{align}
By an expansion of $c(TY)$ in first order, we can easily check that the resolution is crepant: 
$$
c(TY)= f^* c(TY_0).
$$
After the blowup, the homological total Chern class is $c(Y)=c(TY)\cap [Y]$:
\begin{equation}
c(Y)=
(3f^* H+6f^* \pi^* L-2E_1)
(1+E_1)\frac{(1+f^* Z_1-E_1)(1+f^* Z_2 -E_1)(1+f^* Z_3-E_1)}{(1+f^* Z_1)(1+f^* Z_2)(1+f^* Z_3)} f^*c(X_0). \nonumber
\end{equation}
To compute the Euler characteristic, we have to evaluate 
$$
\chi(Y)= \int_{Y} c(Y).
$$
The first pushforward requires the following data: 
\begin{equation}
M_1= \frac{Z_2 Z_3}{(Z_2-Z_1)(Z_3-Z_1)}, \quad 
M_2= \frac{Z_1 Z_3}{(Z_1-Z_2)(Z_3-Z_2)}, \quad 
M_3= \frac{Z_1 Z_2}{(Z_1-Z_3)(Z_2-Z_3)}.\nonumber
\end{equation}
Applying the pushforward theorem is now a purely algebraic routine that can be  easily implemented in one's favorite algebraic software. 
 Using Theorem \ref{Thm:Push}, we pushforward $c(Y)$ from the Chow ring $A_*(X_1)$ to the Chow ring $A_*(X_0)$. 
Using Theorem \ref{Thm:PushH}, we then pushforward $f_* c(Y)$ to the Chow ring of the base. 
 When the dust settles, we find an expression of $\chi(Y)$ in the Chow ring of the base: 
\begin{equation}
\chi(Y)=\int_{Y} c(TY)=\int_{X_0} f_* c(TY)=  \int_B  \pi_* f_* c(TY)=\int_B   6\frac{ 2L+3 L S  - S^2}{(1+S) (1+6 L - 2 S )} c(TB).
\label{eqn:genfcn}\nonumber
\end{equation}
Concretely, we replace $c(TB)$ by the Chern polynomial $c_t(TB)=1+c_1t + c_2t^2 + c_3 t^3+\cdots$, $L$ by $ Lt$, and $S$ by $ St$; if $d$ is the dimension of $B$, the Euler characteristic of $Y$ is given by the coefficient of $t^d$ in the Taylor expansion centered at $t=0$ of the generating function: 
\begin{align}
\begin{aligned}
\chi(Y) & =   6\frac{ 2Lt+3 L St^2  - S^2t^2}{(1+S t) (1+6 L t- 2 St )} c_t(TB)\\
&=
12 L t + 6 t^2 (2 c_1 L - 12 L^2 + 5 L S - S^2) + \\
& \  \    +  6 t^3   (-12 c_1 L^2+5 c_1 L S-c_1 S^2+2 c_2 L+72 L^3-54 L^2 S+15 L S^2-S^3)+\cdots
    \end{aligned}\nonumber
\end{align}
\begin{thm}
If $B$ is a curve, the Euler characteristic of an SU(2) model is $12 L$. If $B$ is a surface, the Euler characteristic is $6 (2 c_1 L - 12 L^2 + 5 L S - S^2)$. 
If $B$ is a threefold, the Euler characteristic is   $6 (-12 c_1 L^2+5 c_1 L S-c_1 S^2+2 c_2 L+72 L^3-54 L^2 S+15 L S^2-S^3)$.
\end{thm}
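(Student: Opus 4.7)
The plan is to read off the stated Euler characteristics directly from the generating function
$$
\chi(Y) \;=\; \int_B 6\,\frac{2L + 3LS - S^2}{(1+S)(1+6L-2S)}\,c(TB),
$$
which has already been derived from the crepant resolution $f: X_1 \to X_0$ and two applications of the pushforward theorems (Theorem \ref{Thm:Push} and Theorem \ref{Thm:PushH}). So the only remaining work is extracting coefficients of an explicit formal power series in a weight-grading variable $t$.

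First I would replace $L \mapsto Lt$, $S \mapsto St$, and $c(TB)\mapsto c_t(TB) = 1 + c_1 t + c_2 t^2 + \cdots$, obtaining a rational function of $t$ with coefficients in $A_*(B)$. Because $\int_B$ only sees the top-degree piece, the Euler characteristic is $[t^d]\,Q(tL,tS)\,c_t(TB)$ where $d=\dim B$. Expanding the rational factor as
$$
\frac{2Lt + 3LSt^2 - S^2 t^2}{(1+St)(1+6Lt-2St)} \;=\; (2Lt + 3LSt^2 - S^2t^2)\bigl(1 - St + S^2t^2 - \cdots\bigr)\bigl(1 - (6L-2S)t + (6L-2S)^2 t^2 - \cdots\bigr),
$$
a direct multiplication through order $t^3$ (which is all the precision needed for $d\le 3$) produces the coefficients displayed just before the theorem. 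The terms independent of $c_i$ come from the rational factor alone, while the $c_1$ and $c_2$ contributions arise from pairing a lower power of $t$ in the rational factor with $c_i t^i$.

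The three asserted formulas are then simply the coefficients of $t$, $t^2$, and $t^3$, respectively, in the generating series displayed in the excerpt. In particular, the $d=1$ case reduces to $12L$, recovering the curve case of Kodaira's formula $\chi(Y) = 12\int L$ quoted in Section \ref{sec:Euler}.

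The only conceivable obstacle is arithmetical: carrying out the multiplication of three formal series and collecting terms without sign or coefficient errors. This is routine and can be verified by any computer algebra system, but it is also straightforward by hand through order $t^3$. Beyond that, no new geometric ingredient is needed—the nontrivial content of the proof lies entirely in the derivation of the generating function, which was accomplished by applying Theorem \ref{Thm:Push} to pushforward along the blowup $f: X_1 \to X_0$ using the moments $M_1, M_2, M_3$ determined by the classes $Z_1 = H+2\pi^*L$, $Z_2 = H+3\pi^*L$, $Z_3 = \pi^*S$, followed by Theorem \ref{Thm:PushH} to pushforward along $\pi: X_0 \to B$.
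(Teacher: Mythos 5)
Your proposal is correct and follows exactly the paper's own route: the paper derives the generating function $\chi(Y)=\int_B 6\,\frac{2L+3LS-S^2}{(1+S)(1+6L-2S)}\,c(TB)$ via Theorems \ref{Thm:Push} and \ref{Thm:PushH} and then obtains the theorem by substituting $L\mapsto Lt$, $S\mapsto St$, $c(TB)\mapsto c_t(TB)$ and reading off the coefficients of $t$, $t^2$, $t^3$, which is precisely the expansion you describe. No further comment is needed.
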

In order to consider the Calabi-Yau case, we set $L=c_1(TB)$ in the above expression, which gives 
$$
\chi(Y)=
12 c_1 t - 6 t^2 (10 c_1^2 - 5 c_1 S + S^2) + 
 6 t^3 (60 c_1^3 - 49 c_1^2 S + 2 c_1 c_2 + 14 c_1 S^2 - S^3)+\cdots
$$
Note that we retrieve the result for a smooth Weierstrass model if we further impose $S=0$. 

 \begin{rem}
As a byproduct of the computation of the Euler characteristic of the resolution,  we can also easily evaluate the contribution from the singularities to be 
$$
 6\frac{ 2L+3 L S  - S^2}{(1+S) (1+6 L - 2 S )} c(TB)-\frac{12 L}{1+ 6 L} c(TB)=6 \frac{(5L+6 L^2  -2 L S  -S ) S}{(1+6 L ) (1+6 L-2 S ) (1+S )}  c(TB),
$$
which can be rewritten as 
$$
\chi(Y)-\chi(Y_0)=6 \frac{6 L^2  -2 L S+5 L  -S }{(1+6 L ) (1+6 L-2 S )}  c(S), \quad c(S)= \frac{S}{1+S} c(TB)\cap [B]. 
$$
In the Calabi-Yau case $L=c_1(TB)$, the above quantity usually has a physical meaning. 
For example, if $Y$ is a Calabi-Yau fourfold, this expression reduces to 
$-6S(7 c_1-S)^2\cap [B]$, 
which is  the contribution of branes to the Euler characteristic.  
In another limit, the above expression can be understood as the contribution of the $G_4$-flux in M-theory to the M2-brane flux or brane flux in type IIB string theory: 
$$
\frac{1}{2}\int_{Y_0} G_4\wedge G_4=\frac{1}{2} \int_S F\wedge F=-6 \int_S (7 c_1-S)^2.
$$
\end{rem}
\section{Tables of Results}
\label{sec:results}
The $G$-models studied in this paper are all realized as crepant resolutions of the singular Weierstrass model 
	\begin{align}	
 y^2z+ a_1 xy z + a_3  yz^2 -(x^3+ a_2 x^2 z + a_4 x z^2 + a_6 z^3)=0,\nonumber
	\end{align}
where the desired singularity structures corresponding to the decorated Kodaira fibers can be specified by the valuation of the coefficients of the Weierstrass equation with respect to  the divisor $S=V(s)$. 
Following Tate's algorithm, we use the notation $a_{i,p}= 	a_i / s^p $, where the valuations $p$ are the minimal values dictated by Tate's algorithm and we assume that the coefficients $a_{i,p}$ are generic. 
 
 We present the results of our computation of the Euler characteristic generating functions for various $G$-models. The generating functions are the pushforwards of the homological total Chern class of the resolved Weierstrass model to the base $B$, and are expressed as rational functions of the classes $S$ and $L$ (where $L = c_1(\mathscr L)$ is the class of the fundamental line bundle and $S$ is the class of the divisor in the base $B$), multiplied by the total Chern class of the base, $c(B)$---see Table \ref{Table:Push}. Tables \ref{Table:Euler3}-\ref{table:CY4} specialize the results to (respectively) elliptic threefolds, fourfolds, and elliptic Calabi-Yau fourfolds, while Table \ref{Table.HodgeCY3} summarizes the Hodge numbers for Calabi-Yau threefold $G$-models.  

When  computing  Hodge numbers of a $G$ model which is a Calabi-Yau threefold, we recall that we  assume that the base is a rational surface. This is a direct consequence of  Definition \ref{defn:CY}.
 Moreover, for a $G$-model with $G$ a simple group, the integer $f$ that  enters in Theorem \ref{Thm:STW2} is the rank of $G$.

For the SO(3), SO(5), and SO(6)-models, the class $S$ is given by \cite{MP}:
\begin{align}
\begin{cases}
	S =  4 L ~~\text{for SO(3)} , \\
	S=2 L ~~ \text{for SO(5)},\\
	S=2L ~~ \text{for SO(6)}.
	\end{cases}
\nonumber
\end{align}	
 
  Below we list the various Weierstrass equations we use to compute  the $G$-models, labeled by their Kodaira fiber type and associated Lie group $G$. It is necessary to specify a crepant resolution in order to actually compute the total Chern class and Euler characteristic of a $G$-model. 
 There could be several distinct crepant resolutions for a $G$-model. However, Theorem \ref{thm:Kontsevitch} assures that the Euler characteristic is insensitive to the choice of crepant resolution and therefore we only need one crepant resolution to compute the Euler characteristic of a $G$-model defined by the crepant resolution of a Weierstrass model. The models associated to the groups SU($n$) and USp($2n$) are  \cite{Katz:2011qp}:
\begin{align}
	\label{eqn:SO}
&\text{I}_2^{\text{s}} &&\text{SU(2)}&:&\quad 	&&y^2z+a_1 x y z + a_{3,1} s y z =x^3 + a_{2,1} s  x^2z + a_{4,1}s x z^2+a_{6,2} s^2 z^3,\\
&\text{I}_{2n}^{\text{ns}} &&\text{USp($2n$)}&:&\quad 	&&y^2z =x^3 + a_2 x^2z + a_{4,n} s^n x z^2+a_{6,2n} s^{2n} z^3,\\
&\text{I}_{2n+1}^{\text{ns}} &&\text{USp($2n$)}&:&\quad 	&&y^2z =x^3 + a_2 x^2z + a_{4,n+1} s^{n+1} x z^2+ a_{6,2n+1} s^{2n+1} z^3,\\
&\text{I}_{2n}^{\text{s}} &&\text{SU($2n$)}&:&\quad 	&&y^2z+ a_1 x y z =x^3 + a_{2,1} s x^2z + a_{4,n} s^n x z^2+a_{6,2n} s^{2n} z^3,\\
&\text{I}_{2n+1}^{\text{s}} &&\text{SU($2n+1$)}&:&\quad 	&&y^2z+a_1 x y z+a_{3, n} s^n y z^2   =x^3 + a_{2,1}s x^2z + a_{4,n+1} s^{n+1} x z^2+a_{6,2n+1} s^{2n+1} z^3.
\end{align}
The Weierstrass models for SO(3), SO(5), and SO(6)  are discussed in \cite{MP}; these models require a Mordell-Weil group $\mathbb{Z}/2 \mathbb{Z}$. 
 The crepant resolutions  of the Weierstrass models for G$_2$, Spin(7), and Spin(8) models are studied in \cite{G2} and require a careful analysis of the Galois group of an associated polynomial. 
The Weierstrass equations defining these models  along with the remaining $G$-models, with $G$ one of the exceptional groups are given below \cite{G2,MP,Esole:2018vnm,Katz:2011qp}:
\begin{align}
&\text{I}_2^{\text{ns}} &&\text{SO(3)}&:&\quad 	&&y^2z =x( x^2 + a_2 xz + a_4  z^2),\\
&\text{I}_4^{\text{ns}}&&\text{SO(5)}&:&\quad &&y^2z  =( x^3 + a_2 x^2z + s^2 xz^2),\\
&\text{I}_4^\text{s}\quad &&\text{SO(6)}&:&\quad  &&y^2z + a_1  x yz = x^3 + ms x^2z + s^2 xz^2 ,\quad m\in\mathbb{C}, \quad m\neq -2, 0, 2,\\
&\text{I}_0^{*\text{ss}}\quad &&\text{Spin(7)}&:& \quad && y^2z= x^3 + a_{2,1} s x^2 z+ a_{4,2} s^2 x z^2+ a_{6,4} s^{4} z^3,\\
&\text{I}_0^{*\text{s}}\quad &&\text{Spin(8)}&:& \quad && y^2z= (x-x_1sz)(x-x_2 sz)(x-x_3 sz)+  s^2 r x^2z + s^3 q x z^2+ s^4 t z^3, \\
&\text{III}&&\text{SU(2)}&:&\quad &&y^2z  =x^3  + s a_{4,1} xz^2 + s^2 a_{6,2} z^3,\\
&\text{IV}^{\text{ns}} &&\text{SU(2)}&:&\quad  &&y^2z  =x^3 +s^2 a_{4,2} xz^2 + s^2 a_{6,2} z^3,\\
&\text{IV}^{\text{s}}\quad  &&\text{SU(3)}&:& \quad &&  y^2z + a_{3,1} s y z^2 =x^3  + s^2 a_{4,2} xz^2 + s^3a_{6,3} z^3,\\
	&\text{I}_0^{*\text{ns}}\quad  &&\text{G}_2&:& \quad &&y^2z=x^3+s^{2} a_{4,2}xz^2 +s^3 a_{6,3}z^3,\\
&\text{IV}^{*\text{ns}} \quad &&\text{F}_4&:&\quad 	&&y^2z =x^3 + s^{3} a_{4,3}xz^2 +s^4 a_{6,4}  z^3,\\
&\text{IV}^{*\text{s}} \quad\ &&\text{E}_6&:&\quad &&y^2z+a_{3,2} s^2 y z^2   = x^3 +s^{3}  a_{4,3} x z^2 +  s^5 a_{6,5}z^3,\\
&\text{III}^{*} \quad\   &&\text{E}_7&:&\quad  &&y^2z =x^3 +s^3 a_{4,3}  xz+s^{5} a_{6,5} z^3, \\
&\text{II}^{*} \quad\  &&\text{E}_8&:& \quad &&y^2z=x^3+s^4 a_{4,4} xz^2 +s^5 a_{6,5} z^3.
\end{align}

\begin{thm}
Let $Y_0\to B$ be a singular Weierstrass model of a $G$-model. If $f:Y\to Y_0$ is a crepant resolution of $Y_0$ given by one of the sequence of blowups given in Table \ref{tab:blowupcenters}, the generating function of the Euler characteristic of any crepant resolution of $Y_0$ is given by the corresponding entry in Table \ref{Table:Push}.  
\end{thm}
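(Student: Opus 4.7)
The proof proceeds uniformly for each $G$-model listed in Table \ref{tab:blowupcenters}, with the actual verification being case-by-case but entirely mechanical. For each model, the singular Weierstrass model $Y_0 \subset X_0 = \mathbb{P}[\mathscr{O}_B \oplus \mathscr{L}^{\otimes 2} \oplus \mathscr{L}^{\otimes 3}]$ is cut out by the prescribed Tate equation. The plan is to compute
\[ \chi(Y) = \int_B \pi_* f_* c(Y), \quad c(Y) = c(TY) \cap [Y], \]
using Theorems \ref{Thm:Push} and \ref{Thm:PushH} for the two pushforwards. By Batyrev's theorem \ref{thm:Batyrev}, the output does not depend on the chosen crepant resolution, so it is enough to perform the computation for the specific sequence of blowups listed in Table \ref{tab:blowupcenters}.

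First, I factor $f = f_n \circ \cdots \circ f_1$ into the prescribed sequence of blowups $f_k: X_k \to X_{k-1}$, with center $Z^{(k)}$ a transverse complete intersection of smooth hypersurfaces whose classes $Z_i^{(k)}$ are read off Table \ref{Table:BlowCenterZ}, and with exceptional divisor $E_k$. Iterating Aluffi's blowup formula (Theorem \ref{Thm:AluffiCBU}) expresses $c(TX_n)$ as a rational function in $f^* Z_i^{(k)}$ and $E_k$. The proper transform of the Weierstrass hypersurface has class
\[ [Y] = f^*(3H + 6\pi^* L) - \sum_{k=1}^n m_k E_k, \]
where the multiplicities $m_k$ are determined by the orders of vanishing of the Tate coefficients; crepancy of $f$, which one checks once and for all by verifying that $c_1(TY) - f^*c_1(TY_0)$ vanishes, fixes these multiplicities consistently. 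The normal bundle sequence $0 \to TY \to TX_n|_Y \to N_Y X_n \to 0$ then gives $c(Y) = [Y]\,c(TX_n)/(1+[Y])$ as an explicit rational expression in $H$, $\pi^* L$, $\pi^* S$, and $E_1, \ldots, E_n$.

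Next, I pushforward through the blowups in reverse order. The key observation is that $c(Y)$, when expanded in $E_k$, is a formal series whose coefficients pull back from $A_*(X_{k-1})$, so Theorem \ref{Thm:Push} applies directly and replaces any $Q(E_k)$ by $\sum_\ell Q(Z_\ell^{(k)})\,M_\ell^{(k)}$ with the $\ell$-moments as in the theorem. Applying this $n$ times yields $f_* c(Y) \in A_*(X_0)$ as a rational function in $H$, $\pi^* L$, $\pi^* S$, and $\pi^* c(TB)$. Finally, Theorem \ref{Thm:PushH} evaluates $\pi_* f_* c(Y)$ by substituting $H = -2L$ and $H = -3L$ in the appropriate pieces, producing the rational function in $L$, $S$, and $c(TB)$ displayed in Table \ref{Table:Push}.

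The main obstacle is not conceptual but combinatorial: for the higher-rank exceptional cases ($\text{E}_6$, $\text{E}_7$, and especially $\text{E}_8$), the blowup sequences are long, each step introduces additional classes $Z_i^{(k)}$ and moments $M_\ell^{(k)}$, and the intermediate rational expressions swell dramatically before ultimately collapsing to the compact generating functions of Table \ref{Table:Push}; carrying this out by hand is impractical, so computer algebra is essential. A secondary technical point is that Theorems \ref{Thm:AluffiCBU} and \ref{Thm:Push} require the center of each blowup to be a transverse complete intersection of smooth hypersurfaces; this must be verified step-by-step against the specific Tate coefficients at each stage of the resolution, since a failure of transversality at any intermediate $X_{k-1}$ would invalidate the application of the pushforward theorem and force a modification of the resolution sequence.
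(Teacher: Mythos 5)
Your proposal is correct and follows essentially the same route as the paper: iterate Aluffi's blowup formula (Theorem \ref{Thm:AluffiCBU}) to get $c(TX_n)$, form $c(Y)=[Y]\,c(TX_n)/(1+[Y])$ with $[Y]$ the proper-transform class, push forward step by step with Theorem \ref{Thm:Push}, and finish with Theorem \ref{Thm:PushH}; this is exactly the procedure the paper carries out explicitly for the SU(2)-model in \S\ref{sec:example} and then delegates to computer algebra for the remaining entries of Table \ref{Table:Push}. Your closing caveat about verifying transversality and smoothness of each center matches the paper's own remark that the theorem does not itself certify that the listed blowup sequences are crepant resolutions.
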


\begin{rem}
The theorem does not address if the sequence of blowups define a crepant resolution. 
One usually has to assume some conditions on the coefficients of the Weierstrass equations.  See for example \cite{G2}.
In some cases, the dimension of the base plays a role too \cite{G2}. 
\end{rem}

\begin{table}[h!]
\centering
$
\scalebox{.91}{$
	\begin{array}{|c|c|c|}
	\hline
	\text{Group} & \text{Fiber Type} & \text{Crepant Resolution} \\\hline
		\text{SU}(2) & 
		\begin{matrix}
		\text{I}_2^\text{s},     \text{I}^{\text{ns}}_2\\
		 {\text{I}^{\text{ns}}_3},  \text{III} \\   \text{IV}^{\text{ns}} 
		 \end{matrix} & \begin{array}{c} \begin{tikzpicture}
	\node(X0) at (0,0){$X_0$};
	\node(X1) at (2.5,0){$X_1$};
	\draw[big arrow] (X1) -- node[above,midway]{$(x,y,s|e_1)$} (X0);		
 \end{tikzpicture}\end{array} \\\hline
 
		\begin{matrix} \text{SU}(3) \\ \text{G}_2 \end{matrix} & \begin{matrix} \text{I}_3^\text{s} , { \text{IV}^\text{s}}  \\ \text{I}_0^{* \text{ns}} \end{matrix} & \begin{array}{c} \begin{tikzpicture}
	\node(X0) at (0,0){$X_0$};
	\node(X1) at (2.5,0){$X_1$};
	\node(X2) at (5,0){$X_2$};
	\draw[big arrow] (X1) -- node[above,midway]{$(x,y,s|e_1)$} (X0);	
	\draw[big arrow] (X2) -- node[above,midway]{$(y,e_1|e_2)$} (X1);		
 \end{tikzpicture}\end{array}  \\\hline
	\begin{matrix}	\text{SU}(4) \\ \text{Spin}(7) \end{matrix} &\begin{matrix}  \text{I}_4^\text{s} \\ \text{I}_0^{* \text{ss}} \end{matrix} & \begin{array}{c} \begin{tikzpicture}
	\node(X0) at (0,0){$X_0$};
	\node(X1) at (2.5,0){$X_1$};
	\node(X2) at (5,0){$X_2$};
	\node(X3) at (7.5,0){$X_3$};
	\draw[big arrow] (X1) -- node[above,midway]{$(x,y,s|e_1)$} (X0);	
	\draw[big arrow] (X2) -- node[above,midway]{$(y,e_1|e_2)$} (X1);
	\draw[big arrow] (X3) -- node[above,midway]{$(x,e_2|e_3)$} (X2);		
 \end{tikzpicture}\end{array}  \\\hline
		\text{Spin}(8) &    \text{I}^{*\text{s}}_0 &  \begin{array}{c} \begin{tikzpicture}
	\node(X0) at (0,0){$X_0$};
	\node(X1) at (2.5,0){$X_1$};
	\node(X2) at (5,0){$X_2$};
	\node(X3) at (8,0){$X_3$};
	\node(X4) at (11,0){$X_4$};
	\draw[big arrow] (X1) -- node[above,midway]{$(x,y,s|e_1)$} (X0);	
	\draw[big arrow] (X2) -- node[above,midway]{$(y,e_1|e_2)$} (X1);
	\draw[big arrow] (X3) -- node[above,midway]{$(x-x_i s z,e_2|e_3)$} (X2);		
	\draw[big arrow] (X4) -- node[above,midway]{$(x-x_j s z,e_2|e_4)$} (X3);
 \end{tikzpicture}\end{array} \\\hline
		\text{F}_4 &  \text{IV}^{*\text{ns}} &  \begin{array}{c} \begin{tikzpicture}
	\node(X0) at (0,0){$X_0$};
	\node(X1) at (2.5,0){$X_1$};
	\node(X2) at (5,0){$X_2$};
	\node(X3) at (8,0){$X_3$};
	\node(X4) at (11,0){$X_4$};
	\draw[big arrow] (X1) -- node[above,midway]{$(x,y,s|e_1)$} (X0);	
	\draw[big arrow] (X2) -- node[above,midway]{$(y,e_1|e_2)$} (X1);
	\draw[big arrow] (X3) -- node[above,midway]{$(x,e_2|e_3)$} (X2);		
	\draw[big arrow] (X4) -- node[above,midway]{$(e_3,e_2|e_4)$} (X3);
 \end{tikzpicture}\end{array}\\\hline
 		\text{SU}(5) &  \text{I}_4^{\text{s}} &  \begin{array}{c} \begin{tikzpicture}
	\node(X0) at (0,0){$X_0$};
	\node(X1) at (2.5,0){$X_1$};
	\node(X2) at (5,0){$X_2$};
	\node(X3) at (8,0){$X_3$};
	\node(X4) at (11,0){$X_4$};
	\draw[big arrow] (X1) -- node[above,midway]{$(x,y,s|e_1)$} (X0);	
	\draw[big arrow] (X2) -- node[above,midway]{$(x,y,e_1|e_2)$} (X1);
	\draw[big arrow] (X3) -- node[above,midway]{$(y,e_1|e_3)$} (X2);		
	\draw[big arrow] (X4) -- node[above,midway]{$(y,e_2|e_4)$} (X3);
 \end{tikzpicture}\end{array}\\\hline
 		\text{Spin}(10) &  \text{I}_1^{*\text{s}} &  \begin{array}{c} \begin{tikzpicture}
	\node(X0) at (0,0){$X_0$};
	\node(X1) at (2.5,0){$X_1$};
	\node(X2) at (5,0){$X_2$};
	\node(X3) at (7.5,0){$X_3$};
	\node(X4) at (10,0){$X_4$};
	\node(X5) at (12.5,0){$X_5$};
	\draw[big arrow] (X1) -- node[above,midway]{$(x,y,s|e_1)$} (X0);	
	\draw[big arrow] (X2) -- node[above,midway]{$(y,e_1|e_2)$} (X1);
	\draw[big arrow] (X3) -- node[above,midway]{$(x,e_2|e_3)$} (X2);		
	\draw[big arrow] (X4) -- node[above,midway]{$(y,e_3|e_4)$} (X3);
	\draw[big arrow] (X5) -- node[above,midway]{$(e_2,e_3|e_5)$} (X4);
 \end{tikzpicture}\end{array}\\\hline
 \text{E}_6  & \text{IV}^{*\text{s}} &   \begin{array}{c} \begin{tikzpicture}
	\node(X0) at (0,0){$X_0$};
	\node(X1) at (2.5,0){$X_1$};
	\node(X2) at (5,0){$X_2$};
	\node(X3) at (7.5,0){$X_3$};
	\node(X4) at (10,0){$X_4$};
	\node(X5) at (10,-1){$X_5$};
	\node(X6) at (7.5,-1){$X_6$};
	\draw[big arrow] (X1) -- node[above,midway]{$(x,y,s|e_1)$} (X0);	
	\draw[big arrow] (X2) -- node[above,midway]{$(y,e_1|e_2)$} (X1);
	\draw[big arrow] (X3) -- node[above,midway]{$(x,e_2|e_3)$} (X2);		
	\draw[big arrow] (X4) -- node[above,midway]{$(e_2,e_3|e_4)$} (X3);
	\draw[big arrow] (X5) -- node[right,midway]{$(y,e_3|e_5)$} (X4);
	\draw[big arrow] (X6) -- node[above,midway]{$(y,e_4|e_6)$} (X5);
 \end{tikzpicture}\end{array} \\\hline
 \text{E}_7 &\text{III}^* &  \begin{array}{c}\begin{tikzpicture}
 	\node(X0) at (0,0){$X_0$};
	\node(X1) at (2.5,0){$X_1$};
	\node(X2) at (5,0){$X_2$};
	\node(X3) at (7.5,0){$X_3$};
	\node(X4) at (10,0){$X_4$};
	\node(X5) at (10,-1){$X_5$};
	\node(X6) at (7.5,-1){$X_6$};
	\node(X7) at (5,-1){$X_7$};
	\draw[big arrow] (X1) -- node[above,midway]{$(x,y,s|e_1)$} (X0);	
	\draw[big arrow] (X2) -- node[above,midway]{$(y,e_1|e_2)$} (X1);
	\draw[big arrow] (X3) -- node[above,midway]{$(x,e_2|e_3)$} (X2);		
	\draw[big arrow] (X4) -- node[above,midway]{$(y,e_3|e_4)$} (X3);
	\draw[big arrow] (X5) -- node[right,midway]{$(e_2,e_3|e_5)$} (X4);
	\draw[big arrow] (X6) -- node[above,sloped,midway]{$(e_2,e_4|e_6)$} (X5);
	\draw[big arrow] (X7) -- node[above,sloped,midway]{$(e_4,e_5|e_7)$} (X6);
 \end{tikzpicture}	\end{array}\\\hline
 \text{E}_8 & \text{II}^*  & \begin{array}{c}\begin{tikzpicture}
 	\node(X0) at (0,0){$X_0$};
	\node(X1) at (2.5,0){$X_1$};
	\node(X2) at (5,0){$X_2$};
	\node(X3) at (7.5,0){$X_3$};
	\node(X4) at (10,0){$X_4$};
	\node(X5) at (10,-1){$X_5$};
	\node(X6) at (7.5,-1){$X_6$};
	\node(X7) at (5,-1){$X_7$};
	\node(X8) at (2.5,-1){$X_8$};
	\draw[big arrow] (X1) -- node[above,midway]{$(x,y,s|e_1)$} (X0);	
	\draw[big arrow] (X2) -- node[above,midway]{$(y,e_1|e_2)$} (X1);
	\draw[big arrow] (X3) -- node[above,midway]{$(x,e_2|e_3)$} (X2);		
	\draw[big arrow] (X4) -- node[above,midway]{$(y,e_3|e_4)$} (X3);
	\draw[big arrow] (X5) -- node[right,midway]{$(e_2,e_3|e_5)$} (X4);
		\draw[big arrow] (X6) -- node[above,midway]{$(e_4,e_5|e_6)$} (X5);
			\draw[big arrow] (X7) -- node[above,midway]{$(e_2,e_4,e_6|e_7)$} (X6);
				\draw[big arrow] (X8) -- node[above,midway]{$(e_4,e_7|e_8)$} (X7);
 \end{tikzpicture}	\end{array}\\\hline\hline
 \text{SO}(3) & \text{I}_2^\text{ns} & \begin{array}{c} \begin{tikzpicture}
	\node(X0) at (0,0){$X_0$};
	\node(X1) at (2.5,0){$X_1$};
	\draw[big arrow] (X1) -- node[above,midway]{$(x,y|e_1)$} (X0);		
 \end{tikzpicture}\end{array} \\\hline
 \begin{matrix}
 \text{SO}(5)
 \end{matrix}
  & \text{I}_4^{\text{ns}} & \begin{array}{c} \begin{tikzpicture}
	\node(X0) at (0,0){$X_0$};
	\node(X1) at (2.5,0){$X_1$};
	\node(X2) at (5,0){$X_2$};
	\draw[big arrow] (X1) -- node[above,midway]{$(x,y,s|e_1)$} (X0);	
	\draw[big arrow] (X2) -- node[above,midway]{$(x,y,e_1|e_2)$} (X1);		
 \end{tikzpicture}\end{array}  \\\hline
 \text{SO}(6) & \text{I}_4^\text{s} & \begin{array}{c} \begin{tikzpicture}
	\node(X0) at (0,0){$X_0$};
	\node(X1) at (2.5,0){$X_1$};
	\node(X2) at (5,0){$X_2$};
	\node(X3) at (7.5,0){$X_3$};
	\draw[big arrow] (X1) -- node[above,midway]{$(x,y,s|e_1)$} (X0);	
	\draw[big arrow] (X2) -- node[above,midway]{$(y,e_1|e_2)$} (X1);
	\draw[big arrow] (X3) -- node[above,midway]{$(x,e_2|e_3)$} (X2);		
 \end{tikzpicture}\end{array} \\\hline
	\end{array}
	$}
	$
	\caption{The blowup centers of the crepant resolutions. See the beginning of Section \ref{sec:centers} for an explanation of our notation. }
	\label{tab:blowupcenters}
\end{table}

\begin{table}
\begin{center}
\scalebox{.91}{$
\begin{array}{|c|c|   c   |}
\hline 
\text{Algebra} & \text{Group} & \text{Generator classes of the blowup centers }(Z_i^{(n)}) \\
\hline
\text{A}_1 &\begin{array}{c} \text{SU}(2) \end{array}& \begin{array}{c}\begin{pmatrix}
H+2L & H+3L & S
\end{pmatrix}\end{array}
\\
\hline
\begin{array}{c}
\text{A}_2 \\
\text{G}_2 
\end{array} & 
\begin{array}{c} 
\begin{array}{c} \begin{matrix} \text{SU}(3) \\ \text{G}_2 \end{matrix}  \end{array} \\
\end{array} & \begin{array}{c} \begin{pmatrix}
H+2L & H+3L & S \\
H+3L-E_1& E_1 & 
\end{pmatrix}  \end{array} \\
\hline 
\text{A}_3 &\begin{array}{c} \text{SU}(4) \\ \text{Spin}(7) \end{array}&\begin{array}{c} \begin{pmatrix}
H+2L & H+3L & S \\
H+3L-E_1& E_1 &  \\
H+2L-E_1& E_2 & 
\end{pmatrix} \end{array} \\
\hline
\text{D}_4 & \text{Spin}(8) & \begin{pmatrix}
H+2L & H+3L & S \\
H+3L-E_1& E_1 &  \\
H+2L-E_1& E_2 & \\
H+2L-E_1& E_2-E_3& 
\end{pmatrix} \\
\hline 
\text{F}_4 & \text{F}_4 & \begin{pmatrix}
H+2L & H+3L & S \\
H+3L-E_1& E_1 &  \\
H+2L-E_1& E_2 & \\
E_2-E_3& E_3 &
\end{pmatrix} \\
\hline 
\text{A}_4 &\text{SU}(5) & \begin{pmatrix}
H+2L & H+3L & S \\
H + 2 L - E_1 & H + 3 L - E_1 & E_1 \\
H + 3 L - E_1 - E_2 & E_1 - E_2 \\
H + 3 L - E_1 - E_2 - E_3 & E_2 &
\end{pmatrix} \\
\hline 
\text{D}_5 &\text{Spin}(10) &\begin{pmatrix}
H+2L & H+3L & S \\
H + 3L - E_1& E_1&  \\
H + 2L - E_1& E_2&  \\
H + 3 L - E_1 - E_2& E_3&  \\
E_2-E_3& E_3-E_4& 
\end{pmatrix}  \\
\hline 
\text{E}_6 & \text{E}_6 & \begin{pmatrix}
H+2L & H+3L & S \\
H+3L-E_1& E_1 &  \\
H+2L-E_1& E_2 & \\
E_2-E_3& E_3 & \\
H+3L-E_1-E_2& E_3-E_4&  \\
H+3L-E_1-E_2-E_5& E_4&\\
\end{pmatrix}  \\
\hline 
\text{E}_7 & \text{E}_7 & \begin{pmatrix}
H+2L & H+3L & S \\
H+3L-E_1& E_1 &  \\
H+2L-E_1& E_2 & \\
H + 3 L - E_1 - E_2& E_3&  \\
E_2-E_3& E_3-E_4& \\
E_2-E_3-E_5&E_4 & \\
E_4-E_6 & E_5
\end{pmatrix} \\
\hline 
\text{E}_8 & \text{E}_8 & \begin{pmatrix}
H+2L & H+3L & S \\
H+3L-E_1& E_1 &  \\
H + 2 L - E_1& E_2&\\
H + 3 L - E_1 - E_2& E_3&\\
E_2 - E_3& E_3 - E_4&\\
E_4& E_5&\\
E_2 - E_3 - E_5& E_4 - E_6& E_6\\
E_4 - E_6 - E_7 & E_7
\end{pmatrix} \\
\hline  \hline
\text{A}_1 &\text{SO}(3) & \begin{pmatrix} H + 2 L & H + 3 L \end{pmatrix} \\\hline
\text{B}_2& \text{SO}(5) & \begin{pmatrix} H + 2 L & H + 3 L  & 2L \\ H + 2L - E_1 & H + 3 L - E_1 & E_1 \end{pmatrix} \\\hline
\hline
\text{A}_3 & \text{SO}(6) & \text{same as\ \   SU(4), but with $S = 2L$} \\
\hline
\end{array}
$}
\end{center}
\caption{The classes of the centers of the blowups for all $G$-models \label{Table:BlowCenterZ}}
\end{table}


 \begin{table}[hb]
 \begin{center}
\renewcommand{\arraystretch}{2}
\scalebox{1}{$
\begin{array}{|c |c |   c |    c|}
\hline 
\text{Algebra} & \text{Group} &\text{ Kodaira Fiber} &  \chi(Y)=\pi_*\Big( f_* c(TY) \cap [Y]\Big) \\
\hline
-& \{\mathrm{e}\}&\text{I}_1 &\displaystyle \frac{12L}{1+6L} c(B)\\
\hline
\text{A}_1 & \text{SU}(2) &
\displaystyle 
\begin{matrix}
		\text{I}_2^\text{s},     \text{I}^{\text{ns}}_2\\
		 {\text{I}^{\text{ns}}_3},  \text{III} \\   \text{IV}^{\text{ns}} 
		 \end{matrix}
&\displaystyle 6\frac{ 2L+3 L S  - S^2}{(1+S) (1+6 L - 2 S )} c(B)\\
\hline
\begin{array}{c}
\text{A}_2 \\
\text{G}_2
\end{array}
& 
\begin{array}{c}
\text{SU}(2) \\
\text{G}_2
\end{array}
& 
\begin{array}{c}
  \text{I}^{\text{s}}_3, \    { \text{IV}^{\text{s}}} \\
 \text{I}^{* \text{ns}}_0 
\end{array}
 & \displaystyle 12\ \frac{L+2S L- S^2 }{(1 + S) (1+6L-3 S )}c(B)\\
  \hline
  \text{C}_2 & \text{USp}(4) &  \text{I}_4^{\text{ns}} &{\displaystyle \ 4  \frac{3 L (1 + 2 L) + 6 L (1 + 2 L) S - (5 + 8 L) S^2 }{(1 + 2 L) (1 + 6 L - 4 S) (1 + S)}c(B)}\\
  \hline
\begin{array}{c}{ 
\text{A}_3} \\
\text{B}_3 \\
\end{array}& \begin{array} {c}
\text{SU}(4) \\
\text{Spin}(7) \\
\end{array} 
&
 \begin{array} {c}
\text{I}^{\text{s}}_4 \\
\text{I}^{* \text{ss}}_0
\end{array} 
&  \displaystyle 4\  \frac{3 L+12 L^2+L S-5 S^2+30 L^2 S-35 L S^2+10 S^3}{(1+S) (1+6L-4 S) (1+4L-2 S)} c(B)\\
\hline
\begin{array}{c}
\text{D}_4 \\
\text{F}_4 
\end{array}& \begin{array} {c}
\text{Spin}(8) \\
\text{F}_4
\end{array} & \begin{array} {c}
\text{I}^{*\text{s}}_0 \\
 \text{IV}^{*\text{ns}}
\end{array} & \displaystyle 12\ \frac{L+3S L-2 S^2 }{(1 + S) (1+6L-4 S )}c(B) \\
\hline 
\text{A}_4 &\text{SU}(5) &\text{I}_5^\text{s} & \displaystyle 	{ \frac{12 L+42 L^2 S+12 L^2-35 L S^2+32 L S-30 S^2}{(1+L) (1+S) (1+6 L-5 S)} c(B)}\\
\hline 
\text{D}_5 &\text{Spin}(10) &\text{I}_1^{*\text{s}} &\displaystyle \frac{4 \left(-8 (4 L+1) S^2+6 (4 L+1) L S+3 (2 L+1) L+10 S^3\right)}{(S+1) (-2 L+S-1) (-6 L+5 S-1)} c(B)\\
\hline 
\text{E}_6 & \text{E}_6 &  \text{IV}^{*\text{s}}&\displaystyle 	3\  \frac{ 4  L+ 12 L^2 -12 S^2 + 6 S L   - 81 S^2 L + 54 S L^2 + 30 S^3}{(1+S) (1+6L-5 S) (1+3L-2S)}c(B) \\
\hline 
\text{E}_7 & \text{E}_7 &     \text{III}^{*}& \displaystyle  2\  \frac{ 6 L+24L^2 +7 L S-21 S^2 +120 L^2 S-190 L S^2+75 S^3}{(1+S) (1+6 L-5 S) (1+4L-3 S)} c(B)\\
\hline 
\text{E}_8 & \text{E}_8 &  \text{II}^{*} &\displaystyle 12\  \frac{L+6 L S- 5S^2}{(1+S)(1+6L-5S)}c(B)\\
\hline 
\hline
\text{A}_1 & \text{SO}(3) &  \text{I}_2^{\text{ns}} & {\displaystyle   \frac{ 12 L}{1+ 4 L} c(B) }\\
\hline 
\text{B}_2 & \text{SO}(5) &  \text{I}_4^{\text{ns}} &{\displaystyle \  \frac{ 4 L (3+4L)}{(1+ 2 L)^2} c(B)}\\
\hline 
\text{A}_3 & \text{SO}(6) & { \text{I}_4^{\text{s}} }& {\displaystyle    \frac{12 L}{1 + 2 L } c(B)}\\
\hline 
\end{array}
$}
\end{center}
\caption{Generating functions of Euler characteristic of crepant resolutions of Tate's models with trivial Mordell-Weil groups. $S$ is the divisor over which the generic fiber is of type given by the Kodaira fiber and $L=c_1(\mathscr{L})$ where $\mathscr{L}$ is the fundamental line bundle of the Weierstrass model. \label{Table:Push}}
 \end{table}
 \clearpage

 \begin{table}[htb]
 \begin{center}
\scalebox{.98}{$ 
 \begin{array}{|c | c |}
  \hline
 \text{Models}  & \text{$\chi(Y_3)$,  Euler characteristic}  \\
  \hline
\text{Smooth Weierstrass}  &   12 L (c_1-6 L)\\
  \text{SU}(2) & 6 (2 c_1 L-12 L^2+5 L S-S^2)\\
 \text{SU}(3)\ \text{or}\  \text{G}_2& 12  (c_1 L-6 L^2+4 L S-S^2)\\
\text{SU}(4)\ \text{or}\  \text{Spin}(7)& 4  (3 c_1 L-18 L^2+16 L S-5 S^2)\\
\text{Spin}(8) \ \text{or} \  F_4 & 12 (c_1 L-6 L^2+6 L S-2 S^2)\\
\text{SU}(5) &  2 (6 c_1 L-36 L^2+40 L S-15 S^2)\\
\text{Spin}(10) & 4 (3 c_1 L-18 L^2+21 L S-8 S^2)\\
 \text{E}_6& 6  (2 c_1 L-12 L^2+15 L S-6 S^2)\\
 \text{E}_7& 2 (6 c_1 L-36 L^2+49 L S-21 S^2)\\
\text{E}_8&   12  (c_1 L-6 L^2+10 L S-5 S^2)\\
\hline\hline
\text{SO}(3)& 12L(c_1-4L) \\
\text{SO}(5)&  4L(3c_1 - 8L) \\
\text{SO}(6)&12L(c_1 - 2L)  \\
 \hline
\end{array}
$}
\caption{Euler characteristic for elliptic threefolds 
\label{Table:Euler3}}
\end{center}

 \begin{center}
\scalebox{.98}{$ 
 \begin{array}{|c|c |}
 \hline
 \text{Models}  & \text{ $\chi(Y_4)$,    Euler characteristic} \\
 \hline
\text{Smooth Weierstrass} &  12 L  (-6 c_1 L+c_2+36 L^2) \\
 \text{SU}(2) & 6  (-12 c_1 L^2+5 c_1 L S-c_1 S^2+2 c_2 L+72 L^3-54 L^2 S+15 L S^2-S^3)\\
 \text{SU}(3) \text{or}\  G_2 & 12  (-6 c_1 L^2+4 c_1 L S-c_1 S^2+c_2 L+36 L^3-42 L^2 S+17 L S^2-2 S^3)\\
\text{SU}(4)\ \text{or}\  \text{Spin}(7)&  4  (-18 c_1 L^2+16 c_1 L S-5 c_1 S^2+3 c_2 L+108 L^3-166 L^2 S+89 L S^2-15 S^3)\\
\text{SU}(5)& -72 c_1 L^2+80 c_1 LS -30c_1 S^2+12 c_2 L+432 L^3-830 L^2 S+555 L S^2-120 S^3\\
\text{Spin}(10)&  4 (-18c_1 L^2+21c_1 L S-8c_1 S^2+3c_2 L+108 L^3-210 L^2 S+140 L S^2-30 S^3)\\
\text{Spin}(8) \ \text{or} \  F_4& 12  (-6 c_1 L^2+c_2 L+36 L^3+6 c_1 L S-2 c_1 S^2-60 L^2 S+34 L S^2-6 S^3) \\
 \text{E}_6& 3  (-24 c_1 L^2+30 c_1 L S-12 c_1 S^2+4 c_2 L+144 L^3-288 L^2 S+195 L S^2-42 S^3)\\
 \text{E}_7& 2  (-36 c_1 L^2+49 c_1 L S-21 c_1 S^2+6 c_2 L+216 L^3-454 L^2 S+321 L S^2-72 S^3)\\
\text{E}_8&  12  (-6 c_1 L^2+10 c_1 L S-5 c_1 S^2+c_2 L+36 L^3-90 L^2 S+75 L S^2-20 S^3)\\
\hline\hline
\text{SO}(3)& 12L(16L^2- 4c_1L +c_2)  \\
\text{SO}(5)& 4L(20L^2-8c_1 L+3c_2)   \\
\text{SO}(6)&  12L(4L^2 - 2Lc_1 + c_2)  \\
\hline
\end{array}
$}
\caption{Euler characteristic for elliptic fourfolds \label{Table:Euler4}}
\end{center}

\begin{center}
\scalebox{.98}{$ 
 \begin{array}{|c|c |}
 \hline
 \text{Models}  & \text{ $\chi(Y_4)$,   Euler characteristic} \\
 \hline
\text{Smooth Weierstrass} &  12 c_1 c_2 + 360 c_1^3 \\
 \text{SU}(2) & 6  (2 c_1 c_2 + 60 c_1^3 - 49 c_1^2 S + 14 c_1 S^2 - S^3)\\
 \text{SU}(3)\  \text{or}\  G_2 & 12  (c_1 c_2 + 30 c_1^3 - 38 c_1^2 S + 16 c_1 S^2 - 2 S^3)\\
\text{SU}(4)\ \text{or}\  \text{Spin}(7)&  12 (3 c_1 c_2 + 30 c_1^3 - 50 c_1^2 S + 28 c_1 S^2 - 5 S^3))\\
\text{Spin}(8) \ \text{or} \  F_4& 12  (c_1 c_2 +30 c_1^3 - 54 c_1^2 S + 32 c_1 S^2 - 6 S^3) \\
\text{SU}(5)&  3 (4 c_1 c_2+120 c_1^3-250 c_1^2 S+175 c_1 S^2-40 S^3)\\
\text{Spin}(10)& 12 (c_1c_2+30c_1^3-63 c_1^2 S+44 c_1 S^2-10 S^3) \\
 \text{E}_6& 3  (4 c_1 c_2+120 c_1^3  - 258 c_1^2 S + 183 c_1 S^2 - 42 S^3)\\
 \text{E}_7& 6 ( 2 c_1 c_2+60 c_1^3 - 135 c_1^2 S + 100 c_1 S^2 - 24 S^3)\\
\text{E}_8&  12  ( c_1 c_2+ 30 c_1^3 - 80 c_1^2 S + 70 c_1 S^2 - 20 S^3)\\
\hline\hline
\text{SO}(3)& 12 c_1(12 c_1^2+c_2) \\
\text{SO}(5)&  12 c_1 (4 c_1^2+c_2)  \\
\text{SO}(6)&   12c_1( 2c_1^2 + c_2) \\
\hline
\end{array}
$}
\caption{Euler characteristic for Calabi-Yau elliptic fourfolds where $c_1=L$.}
\label{table:CY4}
\end{center}
\end{table}

\clearpage

\begin{table}[htb]
 \begin{center}
\scalebox{.97}{$
\begin{array}{|c |c |   c |    c|c|c|}
\hline 
\text{Algebra} & \text{Group} &\text{ Kodaira Fiber} &  h^{1,1}(Y_3) & h^{2,1} (Y_3) & \chi(Y_3)  \\\hline
& & & & &\\
-& \{\mathrm{e}\}& \text{I}_1&11-K^2&11 + 29 K^2 &  -60 K^2 \\
& & & & & \\
\hline
& & & & &\\
\text{A}_1 & SU(2) &\displaystyle 
\begin{matrix}
		\text{I}_2^\text{s},     \text{I}^{\text{ns}}_2\\
		 {\text{I}^{\text{ns}}_3},  \text{III} \\   \text{IV}^{\text{ns}} 
		 \end{matrix}
& 12 - K^2&12+ 29 K^2 + 15 KS + 3 S^2 & - 60 K^2 - 30 KS - 6 S^2 \\
& && & &\\
\hline
& & & && \\
\text{A}_2 &\text{SU}(3) &  \text{I}^{\text{s}}_3, \      \text{IV}^s & &&\\
& &  &13 - K^2 &13 + 29 K^2 + 24 K S + 6 S^2 &-60 K^2 - 48 K S - 12 S^2\\
\text{G}_2 & \text{G}_2  & \text{I}^{* \text{ns}}_0 & & &\\
  & & & &&\\
  \hline
  & & & &&\\
  \text{A}_3 & \text{SU}(4) &  \text{I}^{\text{s}}_4& &&
\\
&  & &14 - K^2 &14 + 29 K^2 + 32 KS + 10 S^2& -60 K^2-64 K S-20 S^2 \\
\text{B}_3 & \text{Spin}(7) & \text{I}_0^{* \text{ss}}  &  &&
\\
& & & & &\\
\hline
& & & & &\\
\text{D}_4 & \text{Spin}(8) &  \text{I}^{*\text{s}}_0 & &&
\\
& &  & 15-K^2 & 15 + 29 K^2 + 36 KS + 12 S^2  & -60 K^2-72 K S-24 S^2\\
\text{F}_4 & \text{F}_4 &  \text{IV}^{*\text{ns}}& & &\\
& &  & & &\\
\hline 
& & & & &\\
\text{A}_4 & \text{SU}(5) &  \text{I}^{\text{s}}_5 & 15-K^2  & 15+29 K^2+40 K S+15 S^2 & -60 K^2-80 K S-30 S^2
\\
& &  & & &  \\
\hline 
& &  & & &  \\
\text{D}_5 & \text{Spin}(10) &  \text{I}^{*\text{s}}_1& 16-K^2 & 16+29 K^2+42 K S+16 S^2 & -60 K^2-84 K S-32 S^2 \\
& &  & & &\\
\hline 
& & & & &\\
\text{E}_6 & \text{E}_6 &  \text{IV}^{*\text{s}}&17 - K^2&17 + 29 K^2 + 45 KS + 18 S^2 & -60 K^2-90 K S-36 S^2 \\
& &  & && \\
\hline 
&& & & & \\
\text{E}_7 & \text{E}_7 &     \text{III}^{*}& 18 - K^2 & 18 + 29 K^2 + 49 KS + 21 S^2 &-60 K^2 - 98 K S - 42 S^2  \\
& & & & & \\
\hline 
& &  & & &\\
\text{E}_8 & \text{E}_8 &  \text{II}^{*} & 19 - K^2&  19 + 29 K^2 + 60 KS + 30 S^2 &-60 K^2 - 120 K S - 60 S^2 \\
& &  & & &\\
\hline 
\hline
& &  & & &\\
\text{A}_1 & \text{SO}(3) & \text{I}_2^{\text{ns}} &  12- K^2& 12+17 K^2 &-36K^2 \\
\hline 
& &  & & &\\
 \text{B}_2 &\text{SO}(5) & \text{I}_4^{\text{ns}} &  14- K^2&  14+9 K^2 & -20K^2 \\
\hline 
& &  & & &\\
\text{A}_3 & \text{SO}(6) & \text{I}_4^\text{s} &  14- K^2&  14+5 K^2 & -12K^2 \\
\hline 
\end{array}
$}
\end{center}
\caption{Hodge numbers and Euler characteristic of Calabi-Yau threefolds obtained from crepant resolutions of Tate's models.  \label{Table.HodgeCY3} }
\end{table}
\clearpage

 \section{Discussion} \label{sec:discuss}

In this paper, we have computed the generating functions for the Euler characteristics of $G$-models obtained by crepant resolutions of Weierstrass models with bases of arbitrary dimension. The case of $G$-models that are also  Calabi-Yau varieties is 
important in string theory and is treated here as a special case. In particular, we list the Euler characteristic of $G$-models that are  elliptic threefolds and fourfolds. For Calabi-Yau threefolds, we also compute the Hodge numbers. These results are insensitive to the particular choice of resolution due to Batyrev's theorem on the Betti numbers of crepant birational equivalent varieties and Kontsevich's theorem on the Hodge numbers of birational equivalent Calabi-Yau varieties \cite{Batyrev.Betti,Kontsevich.Orsay}.
We have considered all possible $G$-models with $G$ a simple Lie group, except for the case of Kodaira fibers I$_{n>5}$ and I$^*_{n>1}$ that we will treat in a follow-up paper.

We start with a $G$-model given by a singular Weierstrass model 
$\varphi: Y_0\longrightarrow B$
with  a fundamental line bundle $\mathscr{L}$ (in the Calabi-Yau case, $c_1(\mathscr{L})=c_1(TB)$). 
 Given a crepant resolution $f:Y\longrightarrow Y_0$ determined by a sequence of blowups with smooth centers that are complete intersections with normal crossings,  
we compute the Euler characteristic of $Y$ as the degree of its 
total Chern class defined in homology
$$
\chi(Y)=\int_Y c (Y).
$$
We work relative to a smooth   base $B$ of arbitrary dimension. 
Using the functorial properties of the degree, we  pushforward first to the Chow ring of the projective bundle and then to the Chow ring of the base: 
$$
\chi(Y)= \int_B\pi_* f_* c(Y). 
$$
The final result is a generating function for the Euler characteristic. 

A key result of this work is Theorem \ref{Thm:Push}, which has numerous applications in intersection theory.
We also provide a simple proof of an identity (Lemma \ref{Jacobi}) that can be traced back to Jacobi's thesis  and appears in numerous situations in mathematics and physics, which is instrumental in the proof of Theorem \ref{Thm:Push}.

We also retrieve in a unifying way known results on the Euler characteristics and Hodge numbers of Calabi-Yau threefolds.  
Furthermore, we have proven {\it en passant} a conjecture of 
Blumenhagen, Grimm, Jurke, and Weigand \cite{BGJW}
 on the  Euler characteristics of Calabi-Yau fourfolds that are  $G$-models with $G$ belonging to the exceptional series. One interesting point that is almost trivial from the perspective taken in this paper is that certain $G$-models with different $G$ will have the same Euler characteristic just because they are resolved by the same sequence of blowups.

\section*{Acknowledgements}
The authors are grateful to  Paolo Aluffi, Jim Halverson,  Remke Kloosterman, Cody Long, Kenji Matsuki, Julian Salazar, Shu-Heng Shao, and Shing-Tung Yau for helpful discussions.
 The authors would like in particular to acknowledge Andrea Cattaneo for  many useful comments and suggestions. 
The authors are thankful to all the participants of the workshop  ``A Three-Workshop Series on the Mathematics and Physics of F-theory'' supported by the National Science Foundation (NSF) grant DMS-1603247. 
M.E. is supported in part by the National Science Foundation (NSF) grant DMS-1701635 ``Elliptic Fibrations and String Theory''. 
P.J.  is  supported by NSF grant PHY-1067976. P.J. would like to extend his gratitude to Cumrun Vafa for his tutelage and continued support. 
M.J.K. would like to acknowledge partial support from NSF grant PHY-1352084.
 M.J.K. is thankful to Daniel Jafferis for his guidance and constant support. 
 

\appendix

\section{Jacobi's Partial Fraction Identity}\label{Sec:Jacobi}
 In this section, we prove a formula of Jacobi  and exploit the theorem to give a simple proof of a formula of Louck and Biedenharn \cite[Appendix A, p. 2400]{Louck:1970jd} by demonstrating its equivalence with the following theorem of Jacobi. 

\begin{thm}[Jacobi, {\cite[Section III.17, p.  29-30]{Jacobi.Thesis}}] 
\label{Thm:Jacobi}
Let $a_i$  ($i=1, \ldots, d$) be $d$ distinct  elements of an integral domain. Then 
\begin{equation}\label{Jacobi.Id}
\prod_{i=1}^d \frac{1}{x-a_i}= \sum_{i=1}^d  \frac{1}{x-a_i}
\prod^d_{\substack{ j=1\\
 j\neq i}} 
\frac{1}{a_i-a_j}.
\end{equation}
\end{thm}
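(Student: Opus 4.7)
The plan is to clear denominators and recognize the resulting identity as Lagrange interpolation applied to the constant polynomial $1$. Specifically, I would first pass to the field of fractions of the integral domain (so that each difference $a_i - a_j$ is invertible, which is legitimate since the $a_i$ are assumed distinct), multiply both sides of the proposed identity \eqref{Jacobi.Id} by $\prod_{k=1}^{d}(x-a_k)$, and observe that the identity becomes equivalent to
$$
1 \;=\; \sum_{i=1}^{d}\,\prod_{\substack{j=1\\ j\neq i}}^{d} \frac{x-a_j}{a_i-a_j}.
$$

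Next, I would interpret the right-hand side of this reformulated identity as the Lagrange interpolation polynomial through the $d$ data points $(a_1,1),(a_2,1),\ldots,(a_d,1)$. It is a polynomial in $x$ of degree at most $d-1$, and a direct substitution $x=a_\ell$ shows that every term except $i=\ell$ vanishes (because of the factor $x-a_\ell$ in $\prod_{j\neq i}(x-a_j)$ whenever $i\neq \ell$), while the $i=\ell$ term reduces to the empty product $1$. Thus the right-hand side takes the value $1$ at each of the $d$ distinct points $a_1,\ldots,a_d$, so by the uniqueness of polynomial interpolation (equivalently, because a nonzero polynomial of degree $<d$ cannot vanish at $d$ distinct points of an integral domain), it is identically $1$.

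Finally, dividing back by $\prod_{k=1}^{d}(x-a_k)$ recovers the original identity \eqref{Jacobi.Id} as an equality of rational functions over the field of fractions, which is the desired form since both sides are manifestly well-defined rational expressions in the variables $x, a_1,\ldots,a_d$.

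The argument is essentially a one-line appeal to Lagrange interpolation, so there is no serious obstacle; the only care required is the passage to the field of fractions so that $(a_i-a_j)^{-1}$ makes sense, and the verification that the polynomial identity obtained after clearing denominators has degree strictly less than $d$ (so that agreement at $d$ points forces equality). Both are immediate, which explains why such a short proof is available.
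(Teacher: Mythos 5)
Your proof is correct, and it takes a different route from the paper's. The paper proves the identity by partial fractions: it posits a decomposition $\prod_i (x-a_i)^{-1} = \sum_i A_i/(x-a_i)$ and determines the coefficients $A_i$ by the method of residues, i.e.\ multiplying by $(x-a_j)$ and evaluating at $x=a_j$. That argument quietly presupposes the \emph{existence} of a simple-pole partial fraction decomposition (a standard fact, but one the paper does not establish), whereas your version clears denominators first and reduces everything to the single elementary fact that a polynomial of degree $<d$ over a field cannot vanish at $d$ distinct points; the identity
\begin{equation*}
1=\sum_{i=1}^{d}\,\prod_{\substack{j=1\\ j\neq i}}^{d}\frac{x-a_j}{a_i-a_j}
\end{equation*}
is then just Lagrange interpolation of the constant $1$. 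The two proofs are dual packagings of the same computation --- the residue evaluation $A_j=\prod_{i\neq j}(a_i-a_j)^{-1}$ is exactly the evaluation of your Lagrange basis polynomial at its node --- but yours is the more self-contained of the two, since it proves rather than assumes the decomposition, and your explicit passage to the field of fractions addresses the invertibility of the differences $a_i-a_j$, which the paper leaves implicit.
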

\begin{proof}
Let 
\begin{equation}\label{J1}
F(x)=\prod_{i=1}^d \frac{1}{x-a_i},
\end{equation}
where $a_i\neq a_j$ for $i\neq j$.  
We would like to find the partial fraction expansion of $F(x)$. That is, we would like to find coefficients $A_i$ ($i=1,\cdots, d$) such that 
\begin{equation}\label{J2}
F(x)= \sum_{i=1}^d  \frac{A_i}{x-a_i}.
\end{equation}
We determine $A_i$ by the {\em method of residues}. 
Multiplying \eqref{J2} by $(x-a_i)$, simplifying, and evaluating at $x=a_i$  gives 
 $$\left.(x-a_j) F(x)\right|_{x=a_j}=A_j.$$ 
 Applying the above formula to \eqref{J1}, we get 
$
A_j=\prod_{i\neq j} \frac{1}{a_i-a_j}
$, which is the identity of Jacobi:  
\begin{equation}
\prod_{i=1}^d \frac{1}{x-a_i}= \sum_{i=1}^d  \frac{1}{x-a_i}\prod^d_{\substack{ j=1\\
 j\neq i}}  \frac{1}{a_i-a_j}.
\end{equation}
\end{proof}

\begin{thm}[Jacobi, Louck--Biedenharn, Cornelius] 
\label{Thm.Jacobi}
Let $h_r(x_1, \cdots, x_d)$ be the homogeneous complete symmetric polynomial  of degree $r$  in $d$ variables of an integral domain. Then,
$$
h_{r}(x_1, \cdots, x_d)=
\sum_{\ell=1}^d x_\ell^{r+d-1} \prod^d_{\substack{ m=1\\
 m\neq \ell}} \frac{1}{ x_\ell -x_m }.
$$

\end{thm}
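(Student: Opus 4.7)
The plan is to obtain Theorem \ref{Thm.Jacobi} by extracting the coefficient of $t^r$ from the standard generating function for the complete homogeneous symmetric polynomials, after using the partial fraction identity of Theorem \ref{Thm:Jacobi} to rewrite that generating function. First I would recall that
\[
\sum_{r=0}^\infty h_r(x_1,\ldots,x_d)\,t^r=\prod_{i=1}^d\frac{1}{1-x_i t},
\]
so the desired identity reduces to showing that the coefficient of $t^r$ on the right equals $\sum_{\ell=1}^d x_\ell^{r+d-1}\prod_{m\neq\ell}(x_\ell-x_m)^{-1}$.

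Next I would manipulate the generating function so that Theorem \ref{Thm:Jacobi} directly applies. Factoring $t^{-1}$ out of each factor gives
\[
\prod_{i=1}^d\frac{1}{1-x_i t}=\frac{1}{t^d}\prod_{i=1}^d\frac{1}{t^{-1}-x_i},
\]
and then Theorem \ref{Thm:Jacobi} with $x=t^{-1}$ and $a_i=x_i$ produces
\[
\prod_{i=1}^d\frac{1}{t^{-1}-x_i}=\sum_{i=1}^d\frac{1}{t^{-1}-x_i}\prod_{j\neq i}\frac{1}{x_i-x_j}=\sum_{i=1}^d\frac{t}{1-x_i t}\prod_{j\neq i}\frac{1}{x_i-x_j}.
\]
Plugging this back into the previous display rewrites the generating function as
\[
\prod_{i=1}^d\frac{1}{1-x_i t}=\frac{1}{t^{d-1}}\sum_{i=1}^d\frac{1}{1-x_i t}\prod_{j\neq i}\frac{1}{x_i-x_j}.
\]

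Finally, I would expand $(1-x_i t)^{-1}=\sum_{s\geq 0}x_i^s t^s$ and compare the coefficient of $t^r$ on both sides: the left side yields $h_r(x_1,\ldots,x_d)$, while the $i$th summand on the right contributes $x_i^{r+d-1}\prod_{j\neq i}(x_i-x_j)^{-1}$, which is the claim. The only delicate point, and the step I expect to need the most care, is justifying that the manipulations take place in the ring of formal Laurent series in $t$ over the field of fractions of the ambient integral domain, so that the $t^{-1}$ substitution into Jacobi's partial fraction formula is legitimate; cancellation of the apparent negative powers of $t$ on the right then happens automatically, since the left side is a genuine power series. Beyond this bookkeeping I do not anticipate any real obstacle, as the argument is essentially a direct coefficient extraction from Theorem \ref{Thm:Jacobi}.
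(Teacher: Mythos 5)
Your proposal is correct and follows essentially the same route as the paper: the paper also proves this identity by substituting $x\to 1/t$ into Jacobi's partial fraction formula (Theorem \ref{Thm:Jacobi}), expanding each $1/(1-x_it)$ as a geometric series, and matching coefficients of like powers of $t$. The only cosmetic difference is that you start from the generating function $\prod_i(1-x_it)^{-1}$ and rewrite it, whereas the paper starts from Jacobi's identity and substitutes; the coefficient bookkeeping, including the shift by $t^{d-1}$ that produces the exponent $r+d-1$, is identical.
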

This theorem was proven by Louck-Biedenharn \cite[Appendix A, p. 2400]{Louck:1970jd} and Cornelius \cite{MR2840951}. 
We present a new and much simpler proof below by showing that the theorem is simply a  reformulation of  Jacobi's identity (Theorem \ref{Thm:Jacobi}).
\begin{proof}
 Substituting $x\to 1/t$ in Equation  \eqref{Jacobi.Id} gives:
\begin{align}
& \prod_{i=1}^d \frac{t}{1-a_i t}= \sum_{i=1}^d  \frac{t}{1-a_i t}\prod^d_{\substack{ j=1\\
 j\neq i}}  \frac{1}{a_i-a_j}.\nonumber
\end{align}
Expanding $1/(1-a_i t)$ in both side of the equation gives 
\begin{align}\nonumber
& t^{d}\sum_{r=0}^\infty  h_r(a_1, \ldots , a_d) t^r= t\sum_{i=1}^d \sum_{k=0}^\infty  a_i^k t^k \prod^d_{\substack{ j=1\\
 j\neq i}}  \frac{1}{a_i-a_j}\\
& \sum_{r=0}^\infty  h_r(a_1, \ldots , a_d) t^{r+d-1}= \sum_{k=0}^\infty\Big( \sum_{i=1}^d a_i^k  \prod^d_{\substack{ j=1\\
 j\neq i}} \frac{1}{a_i-a_j}\Big) t^k.\nonumber
\end{align}
Comparing terms of the same degree in $t$, we get the final expression of Lemma \ref{Jacobi}:
\begin{equation}
h_{r} (a_1,\ldots, a_d)=\sum_{i=1}^d a_i^{r+d-1}  \prod^d_{\substack{ j=1\\
 j\neq i}}  \frac{1}{a_i-a_j}.\nonumber
\end{equation}
\end{proof}

\section{The Euler Characteristic as the Degree of the Top Chern Class}
\label{appB.Euler}
The purpose of this section is to explain  from different points of view why the Euler characteristic is the degree of the top Chern class.
 Traditionally, this statement is seen as a generalization of the Poincar\'e--Hopf  theorem that 
asserts that the total degree of a vector field defined on a smooth manifold $M$ is the Euler characteristic of $M$. This statement can also be seen as a generalization of the Gauss--Bonnet--Chern Theorem (which is  itself is a consequence of Poincar\'e--Hopf theorem). 
Here we will  review three different  approaches. The first one relies on Leftschetz fixed point theorem.  The second one uses 
he Poincar\'e--Hopf  theorem  using the interpretation of  Chern classes as related to the class of some 
degenerated loci as discussed in Chapter 3 of Fulton. The third one  is an application of the Hirzebruch--Riemann--Roch theorem and the Hodge decomposition theorem.

Let $M$ be a smooth compact manifold. The $k$th Betti number of $M$ is by definition the dimension of the cohomology group $H^k(M,\mathbb{Q})$. The Euler characteristic of $M$ is denoted by $\chi(X)$ and is defined as the  following alternative sum of Betti numbers of $M$: 
$$
\chi(M):=\sum_{k=0}^{\dim  M} (-1)^k b_k, \quad b_k:= \dim H^i(M,\mathbb{Q}).
$$

 \subsection{Lefschetz fixed point theorem and the Euler characteristic as an intersection number}

 \begin{thm}[Lefschetz fixed point theorem]
Let $M$ be a compact smooth manifold of dimension $m$ and $f:M\longrightarrow M$ a continuous  map.
We define the Lefschetz number of $f$ as 
$$
L(f):=\sum_{k=0}^m (-1)^k\   \mathrm{tr} \Big(f^*|H^k(M,\mathbb{Q})\Big), \quad f^*: H^k(M,\mathbb{Q})\longrightarrow H^k(M,\mathbb{Q}).
$$ 
 Then $L(f)$ is equal to the intersection number of the graph $\Gamma_f$ of $f$ and the diagonal $\Delta$  in $M\times M$	
$$
L(f)=\int_{M\times M} \Gamma_f \cdot \ \Delta.
$$Thus, the Leftschetz number $L(f)$ is the number of fixed points of $f$  counted with multiplicities.  
\end{thm}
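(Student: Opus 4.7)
The plan is to compute the cohomological intersection pairing $\int_{M\times M}[\Gamma_f]\cdot[\Delta]$ using the K\"unneth isomorphism and Poincar\'e duality, and to match the result against the definition of $L(f)$ term-by-term.

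First I would apply the K\"unneth isomorphism $H^*(M\times M;\mathbb{Q})\cong H^*(M;\mathbb{Q})\otimes H^*(M;\mathbb{Q})$. For each $k$ pick a basis $\{\alpha_i^k\}$ of $H^k(M;\mathbb{Q})$ and let $\{\beta_i^{m-k}\}$ be its Poincar\'e-dual basis in $H^{m-k}(M;\mathbb{Q})$, characterized by $\int_M\alpha_i^k\cup\beta_j^{m-k}=\delta_{ij}$. The key algebraic input is the classical identification of the diagonal class,
$$
[\Delta]=\sum_{k=0}^{m}(-1)^{k(m-k)}\sum_i\alpha_i^k\otimes\beta_i^{m-k},
$$
which is forced by the property that pairing $[\Delta]$ against $\xi\otimes\eta$ must reproduce $\int_M\xi\cup\eta$; this is Poincar\'e duality repackaged on $M\times M$.

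Next I would compute $[\Gamma_f]\cdot[\Delta]$ via pullback along the embedding $\iota=(\mathrm{id},f)\colon M\hookrightarrow M\times M$, whose image is $\Gamma_f$. For any top class $\nu\in H^{2m}(M\times M;\mathbb{Q})$ one has the projection formula $\int_{M\times M}[\Gamma_f]\cup\nu=\int_M\iota^*\nu$. Applying this to $\nu=[\Delta]$ and using $\iota^*(\alpha\otimes\beta)=\alpha\cup f^*\beta$ gives
$$
\int_{M\times M}[\Gamma_f]\cdot[\Delta]=\sum_k(-1)^{k(m-k)}\sum_i\int_M\alpha_i^k\cup f^*\beta_i^{m-k}.
$$
Expanding $f^*\beta_i^{m-k}=\sum_j B_{ji}^{(m-k)}\beta_j^{m-k}$ and using the duality relation $\int_M\alpha_i^k\cup\beta_j^{m-k}=\delta_{ij}$ collapses the inner sum to $\sum_i B_{ii}^{(m-k)}=\mathrm{tr}(f^*\mid H^{m-k}(M;\mathbb{Q}))$. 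A reindexing $k\leftrightarrow m-k$, combined with the sign identity that arises from the Koszul twist in K\"unneth, rewrites the total as $\sum_k(-1)^k\,\mathrm{tr}(f^*\mid H^k(M;\mathbb{Q}))=L(f)$.

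The hard part will be the sign bookkeeping: the Koszul signs from K\"unneth, the grading signs from Poincar\'e duality on an $m$-dimensional manifold, and the sign picked up when commuting $f^*$ past the cup product all contribute, and the final alternating factor $(-1)^k$ only emerges after verifying that these contributions combine correctly in every degree. For the second assertion that $L(f)$ equals the number of fixed points counted with multiplicity, I would observe that $\Gamma_f\cap\Delta=\{(x,x):f(x)=x\}$, and at a nondegenerate fixed point $x_0$ (where $\det(I-df_{x_0})\neq 0$) the two submanifolds meet transversally in $M\times M$ with local intersection index $\mathrm{sgn}\,\det(I-df_{x_0})$; the general case follows by perturbing $f$ to a nearby map with only nondegenerate fixed points and invoking homotopy invariance of the intersection number on the compact manifold $M\times M$.
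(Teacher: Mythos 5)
The paper does not actually prove this statement: the Lefschetz fixed point theorem is quoted in Appendix \ref{appB.Euler} as a classical input, and the only thing proved there is the downstream corollary obtained by taking $f=\mathrm{id}$ (so that $L(f)=\chi(M)=\int_{M\times M}\Delta\cdot\Delta$) followed by the self-intersection formula. Your argument therefore supplies a proof where the paper offers none, and it is the standard cohomological one --- K\"unneth over $\mathbb{Q}$, expansion of $[\Delta]$ in a basis together with its Poincar\'e-dual basis, pullback along $\iota=(\mathrm{id},f)$, and the trace identity --- so the outline is correct. Two cautions. First, the coefficient you write in the diagonal formula, $(-1)^{k(m-k)}$, is tied to a particular ordering of the K\"unneth factors and a particular normalization of the dual basis; in the most common convention (dual basis defined by $\int_M\alpha_i^k\cup\beta_j^{m-k}=\delta_{ij}$, first factor pulled back by the first projection) the sign is $(-1)^{k}$. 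Since you explicitly defer the sign bookkeeping this is a convention issue rather than an error, but it is precisely the place where such proofs silently break, and the claimed cancellation down to $\sum_k(-1)^k\,\mathrm{tr}$ should be verified once a convention is fixed (e.g.\ by testing on $M=S^2$ with $f=\mathrm{id}$, where the answer must be $2$). Second, the theorem as stated allows $f$ merely continuous, so before perturbing to a map with nondegenerate fixed points you must first replace $f$ by a homotopic smooth map (a continuous map need not have isolated fixed points at all); homotopy invariance of both $L(f)$ and the homological intersection number then transports the count of $\mathrm{sgn}\det(I-df_{x_0})$ back to the original $f$, which is what ``counted with multiplicities'' means here.
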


 \begin{cor}
 Let $M$ be a compact smooth manifold and $\Delta$ be the diagonal of $M\times M$, then the Euler characteristic of $M$, $\chi(M)=\sum_i (-1)^k \dim H^i(M, \mathbb{Q})$,  is equal to the self-intersection of $\Delta$ in $M\times M$: 
 $$
 \chi(M)=\int_{M\times M} \Delta \cdot \Delta.
 $$
\end{cor}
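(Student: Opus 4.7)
The plan is to specialize the Lefschetz fixed point theorem to the identity map $f = \mathrm{id}_M : M \to M$. I will first compute the Lefschetz number $L(\mathrm{id})$ directly from its definition, then identify the graph $\Gamma_{\mathrm{id}}$ with the diagonal $\Delta$, and finally combine the two expressions using the theorem.

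For the first step, note that the pullback $\mathrm{id}^* : H^k(M,\mathbb{Q}) \to H^k(M,\mathbb{Q})$ is itself the identity operator on each cohomology group, so its trace equals $\dim_{\mathbb{Q}} H^k(M,\mathbb{Q}) = b_k$. Summing with the appropriate signs gives
$$
L(\mathrm{id}) = \sum_{k=0}^{\dim M} (-1)^k \,\mathrm{tr}\bigl(\mathrm{id}^*|H^k(M,\mathbb{Q})\bigr) = \sum_{k=0}^{\dim M} (-1)^k b_k = \chi(M),
$$
by the definition of the topological Euler characteristic recalled at the beginning of the subsection.

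For the second step, the graph of the identity is $\Gamma_{\mathrm{id}} = \{(x,x) : x\in M\} = \Delta$. Applying the Lefschetz fixed point theorem to $f=\mathrm{id}$ then gives
$$
L(\mathrm{id}) = \int_{M\times M} \Gamma_{\mathrm{id}} \cdot \Delta = \int_{M\times M} \Delta \cdot \Delta.
$$
Equating the two expressions for $L(\mathrm{id})$ yields the claim. There is no serious obstacle here: the corollary is a one-line specialization of the theorem, and the only verification required is that the induced map on cohomology has trace equal to the dimension, which is immediate for the identity.
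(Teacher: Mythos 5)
Your proof is correct and follows exactly the same route as the paper: both specialize the Lefschetz fixed point theorem to $f=\mathrm{id}_M$, observe that the trace on $H^k(M,\mathbb{Q})$ is the Betti number $b_k$, and identify $\Gamma_{\mathrm{id}}$ with the diagonal $\Delta$. Nothing is missing.
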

\begin{proof}
Consider the special case of  Lefschetz theorem  for which $f$ is the identify map on $M$.  Then, the  Leftschetz number  reduces to the Euler characteristic $\chi(M)$ as the trace $\mathrm{tr} \Big(f^*|H^k(M,\mathbb{Q})\Big)$ becomes the $k$th Betti number $b^k$ of $M$  
 and the  intersection number $\int_{M\times M} \Gamma_{f} \cdot \ \Delta$ becomes the  self-intersection of the diagonal $\Delta$ in $M\times M$.
\end{proof}

\begin{thm}[Self-intersection formula, {see \cite[Corollary 6.3, p. 102-103]{Fulton.Intersection}}]
Let $i: Z\to X$ be a regular imbedding of codimension $d$ and normal bundle $N$. 
Then for any $\alpha\in A_*(Z)$ we have the self-intersection formula 
$$
i^* i_* (\alpha)=c_d(N)\cap \alpha.
$$
\end{thm}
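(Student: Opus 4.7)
The plan is to reduce the general case of a regular embedding to the special case of the zero section of a vector bundle, via \emph{deformation to the normal bundle}. Concretely, I would invoke the standard construction of a smooth family $M^{\circ}_{Z/X}\to \mathbb{A}^{1}$ whose fiber over $t\neq 0$ is $X$ and whose fiber over $t=0$ is $N=N_{Z}X$, together with a compatible embedding of $Z\times \mathbb{A}^{1}$ that restricts to $i:Z\hookrightarrow X$ on the generic fiber and to the zero section $s:Z\hookrightarrow N$ on the special fiber. This produces a specialization homomorphism $\sigma:A_{*}(X)\to A_{*}(N)$ sending $i_{*}\alpha$ to $s_{*}\alpha$.

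Next I would show that the class $i^{*}i_{*}(\alpha)\in A_{*}(Z)$ specializes to $s^{*}s_{*}(\alpha)\in A_{*}(Z)$. Because both classes live on $Z$ (which sits unchanged across all fibers of $M^{\circ}_{Z/X}$), and because specialization commutes with both proper pushforward along the trivial family $Z\times\mathbb{A}^{1}\to Z$ and with the refined Gysin pullback associated to a regular embedding, the two sides are identified. In other words, the formula for a general regular embedding $i$ is equivalent to the same formula for the zero section of its normal bundle.

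It then remains to prove the identity $s^{*}s_{*}(\alpha)=c_{d}(N)\cap\alpha$ for the zero section $s:Z\hookrightarrow N$ of a rank-$d$ vector bundle $\pi:N\to Z$. This is essentially intrinsic to the construction of Chern classes: $\pi^{*}:A_{*}(Z)\to A_{*+d}(N)$ is an isomorphism with inverse $s^{*}$ by homotopy invariance, and $s_{*}[Z]$ is represented by the zero locus of the tautological section of $\pi^{*}N\to N$, whose class is $c_{d}(\pi^{*}N)\cap[N]$. Combining these two facts with the projection formula immediately yields $s^{*}s_{*}(\alpha)=c_{d}(N)\cap\alpha$.

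The main obstacle will be the technical step of showing that the specialization map commutes with the Gysin pullback $i^{*}$, which rests on the compatibility of refined intersection with the flat limit supplied by the deformation to the normal cone. This compatibility is the core technical content of Chapters~5 and~6 of Fulton; once it is in place, both the reduction to the zero section and the evaluation in the vector bundle case are formal.
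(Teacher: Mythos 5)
The paper does not prove this statement: it is quoted verbatim from Fulton (\emph{Intersection Theory}, Cor.~6.3) and used as a black box in Appendix~B, so there is no in-paper argument to compare against. Your sketch is the standard proof from the cited reference, and it is sound as an outline: deformation to the normal cone reduces the general regular embedding to the zero section $s:Z\hookrightarrow N$, and there the identity $s^{*}s_{*}\alpha=c_{d}(N)\cap\alpha$ follows because $s_{*}\alpha$ is the zero locus of the (regular) tautological section of $\pi^{*}N$ over $N|_{V}$ for each $V$, i.e.\ $s_{*}\alpha=\pi^{*}(c_{d}(N)\cap\alpha)$, and $s^{*}=(\pi^{*})^{-1}$ by homotopy invariance. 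The one place you should be careful is the step you yourself flag: ``specialization commutes with the refined Gysin pullback'' is not an independent lemma one proves after the fact --- in Fulton's development the Gysin map $i^{!}$ is \emph{defined} through the specialization to the normal cone, so that compatibility is essentially definitional rather than something to be verified; if instead you take $i^{*}$ as given by some other construction, you owe a comparison of definitions. A slightly slicker packaging of the same machinery, and the one Fulton actually uses, is to deduce the self-intersection formula as the special case of the excess intersection formula (his Theorem~6.3) applied to the Cartesian square with both legs equal to $i$: the fiber product is $Z$, the induced embedding is the identity (codimension $0$), the excess bundle is all of $N$, and the formula $i^{!}i_{*}\alpha=c_{d}(N)\cap\alpha$ drops out immediately. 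Either route is fine; yours just re-derives the zero-section case by hand rather than quoting the excess-intersection statement.
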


\begin{thm}\label{Thm:PoincareHopfChern}
If $X$ is a nonsingular complete algebraic variety, then the Euler characteristic of $X$ is equal to the degree of the  total homological Chern class of $X$:
$$
\chi(X)=\int c(X), \quad c(X):=c(TX)\cap [X].
$$
\end{thm}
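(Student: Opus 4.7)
The plan is to deduce this from the Lefschetz fixed point theorem and the self-intersection formula, both of which are stated immediately above. Since the degree $\int_X$ is nontrivial only on the zero-dimensional component, and $c(X) = c(TX)\cap [X]$ decomposes as $\sum_i c_i(TX)\cap[X]$ with $c_i(TX)\cap[X]\in A_{d-i}(X)$ for $d=\dim X$, the only summand contributing is the top one, so the identity to prove is equivalent to
$$
\chi(X) = \int_X c_d(TX)\cap [X].
$$

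The first step is to apply the corollary of Lefschetz's theorem to the identity map $f=\mathrm{id}_X$, which yields $\chi(X) = \int_{X\times X}\Delta\cdot\Delta$, where $\Delta\subset X\times X$ is the diagonal. Writing $i:X\hookrightarrow X\times X$ for the diagonal embedding, this intersection number is $\int_{X\times X} i_*[X]\cdot i_*[X] = \int_X i^*i_*[X]$, where I use functoriality of the degree under proper pushforward (Lemma \ref{lem:Push}).

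The second step is to invoke the self-intersection formula: $i^*i_*[X] = c_d(N)\cap[X]$, where $N = N_{\Delta}(X\times X)$ is the normal bundle of the diagonal and $d$ is its codimension, namely $\dim X$. The key geometric input now is the canonical identification $N_{\Delta}(X\times X)\cong TX$, obtained from the short exact sequence $0\to T\Delta\to i^*T(X\times X)\to N\to 0$ together with $T(X\times X)=p_1^*TX\oplus p_2^*TX$ and the observation that $T\Delta$ sits inside $TX\oplus TX$ as the diagonal; the quotient is then the anti-diagonal, which is isomorphic to $TX$. Combining these gives
$$
\chi(X) = \int_X c_d(TX)\cap[X],
$$
which is the desired statement.

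The main obstacle is simply the identification $N_{\Delta}(X\times X)\cong TX$: all the intersection-theoretic machinery is quoted from earlier in the paper, so the one substantive geometric ingredient is this canonical isomorphism of normal and tangent bundles. Once that is in hand, everything else is a bookkeeping application of Lemma \ref{lem:Push}, the Lefschetz corollary, and the self-intersection formula. The paper mentions two alternative proofs, which I would sketch in the appendix as variants: (i) a Poincar\'e--Hopf style argument interpreting $c_d(TX)\cap[X]$ as the class of the (signed) zero locus of a generic section of $TX$ whose total index is $\chi(X)$; and (ii) a Hirzebruch--Riemann--Roch argument applied to $\mathcal O_X$, combined with the Hodge decomposition $\chi(X)=\sum_{p,q}(-1)^{p+q}h^{p,q}(X)$ and the identification of the top Todd term with $c_d(TX)$ modulo lower-degree corrections that drop out after summing over $p$.
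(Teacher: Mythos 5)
Your proposal follows exactly the paper's own argument: apply the Lefschetz corollary to the identity map to get $\chi(X)=\int_{X\times X}\Delta\cdot\Delta$, then use the self-intersection formula together with the canonical isomorphism $N_\Delta(X\times X)\cong TX$ to conclude $\chi(X)=\int c_{\dim X}(TX)\cap[X]=\int c(X)$. The extra detail you supply on the normal-bundle identification (via the exact sequence and the anti-diagonal quotient) is correct and fills in what the paper cites to Bott--Tu, so the proof is sound and matches the paper's route.
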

\begin{proof}
The theorem follows from the previous corollary expressing the Euler characteristic $\chi(X)$ as the 
self-intersection of the diagonal $\Delta$ in $X\times X$, followed by the self-intersection formula expressing $\Delta \cdot \Delta$ as the class $c_{\dim  X} (N_\Delta {X\times X})\cap [\Delta]$. 
Since  the normal bundle of $\Delta$ in $X\times X$ 
is isomorphic to the tangent bundle of $X$ (see for example  \cite[Lemma 11.23, p. 127]{BottTu}), it follows that  \cite[Example 8.1.12, p. 136]{Fulton.Intersection},  the self-intersection of the diagonal $\Delta$ in $X\times X$ is   $\int c_{\dim X} (TX)\cap[X]=\int c(TX)\cap [X]$:
 \begin{align}
\begin{aligned}
\chi(X) &=\int_{X\times X} \Delta\cdot {\Delta}=\int c(N_\Delta {X\times X})\cap [\Delta]=\int c(TX)\cap[X].
\end{aligned}\nonumber
\end{align}
\end{proof}

\subsection{Poincar\'e-Hopf theorem and the Euler characteristic}

\begin{thm}[Poincar\'e-Hopf]
Let $M$ be a smooth compact manifold without boundary and $v$ be a vector field with isolated zeros.  Then the sum of  the local indices at the zeros of $v$  is equal to the Euler characteristic of $M$. 
\end{thm}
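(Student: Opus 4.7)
The plan is to deduce the theorem from the Lefschetz fixed-point theorem stated just above. The idea is to turn the vector field $v$ into a self-map $f_\varepsilon: M\to M$ whose fixed points are exactly the zeros of $v$ and whose local Lefschetz indices agree with the local indices of $v$; since $f_\varepsilon$ will be homotopic to the identity, its Lefschetz number is $\chi(M)$, and the theorem follows.

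First I would fix a Riemannian metric on $M$ and set $f_\varepsilon(x)=\exp_x(\varepsilon v(x))$. By compactness of $M$ the injectivity radius has a uniform positive lower bound, so for $\varepsilon>0$ sufficiently small $\exp_x$ is a diffeomorphism on the relevant ball of $T_xM$. Consequently the fixed points of $f_\varepsilon$ are precisely the zeros of $v$, and they are isolated by hypothesis. The straight-line homotopy $t\mapsto f_{t\varepsilon}$ connects $f_\varepsilon$ to the identity, so $f_\varepsilon^{*}$ acts as the identity on $H^{*}(M;\mathbb{Q})$, giving
\[
L(f_\varepsilon)=\sum_{k=0}^{\dim M}(-1)^{k}\dim H^{k}(M;\mathbb{Q})=\chi(M).
\]
On the other hand, the Lefschetz fixed-point theorem expresses $L(f_\varepsilon)$ as the sum of local fixed-point indices of $f_\varepsilon$ at its isolated fixed points.

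The remaining step is the local identification of the Lefschetz index of $f_\varepsilon$ at a zero $p$ of $v$ with $\mathrm{ind}_p(v)$. In normal coordinates centered at $p$ one has $f_\varepsilon(x)=x+\varepsilon v(x)+O(\varepsilon^{2})$, so the local fixed-point index is the local degree at $0$ of the map $x\mapsto f_\varepsilon(x)-x$. For $\varepsilon$ small this local degree coincides with the local degree of $v$ at $p$, which is by definition $\mathrm{ind}_p(v)$. Summing over all zeros then yields $\sum_p \mathrm{ind}_p(v)=L(f_\varepsilon)=\chi(M)$.

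The main obstacle will be rigorously handling degenerate isolated zeros. The cleanest route is a perturbation argument: using a bump function supported in a small neighborhood of each zero, replace $v$ by a vector field $\widetilde v$ with only nondegenerate zeros inside that neighborhood and agreeing with $v$ outside. Homotopy invariance of the local degree shows that the total index is unchanged, while the corresponding perturbation of $f_\varepsilon$ splits each fixed point into finitely many simple ones whose Lefschetz indices (each $\pm1$ according to $\mathrm{sgn}(\det D\widetilde v_p)$) match the local indices of $\widetilde v$ term-by-term. Assembling the nondegenerate computation with the invariance of $L(f_\varepsilon)$ under homotopy completes the proof.
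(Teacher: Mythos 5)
Your argument is sound, but note that the paper does not actually prove this statement: Poincar\'e--Hopf is quoted there as a classical theorem, accompanied only by a historical remark (Poincar\'e 1885 in dimension two, Hopf 1926 in general), and is then combined with Fulton's zero-scheme formula $[Z]=c_r(E)\cap[X]$ to conclude $\chi(X)=\int c(X)$. What you supply is the standard deduction of Poincar\'e--Hopf from the Lefschetz fixed-point theorem, which fits naturally next to the paper's first appendix subsection, where Lefschetz applied to the identity map already yields $\chi(M)=\Delta\cdot\Delta$; your construction $f_\varepsilon=\exp_x(\varepsilon v(x))$ is in effect a graph of a perturbation of the identity realizing that self-intersection, so the two derivations are close cousins. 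The steps you outline are all correct: compactness gives a uniform injectivity radius so that $\mathrm{Fix}(f_\varepsilon)$ coincides with the zero set of $v$ for small $\varepsilon$, homotopy to the identity gives $L(f_\varepsilon)=\chi(M)$, and the local comparison of the fixed-point index with $\mathrm{ind}_p(v)$ goes through because on a small sphere around $p$ the error term $O(\varepsilon^2)$ is dominated by $\varepsilon\lvert v\rvert$. One small point worth making explicit: depending on whether the local Lefschetz index is taken as the degree of $x\mapsto x-f(x)$ or of $x\mapsto f(x)-x$, your identification picks up an overall factor $(-1)^{\dim M}$ relative to $\mathrm{ind}_p(v)=\mathrm{sgn}\det Dv_p$ in the nondegenerate case; this is harmless since $\chi(M)=0$ in odd dimensions, but it deserves a sentence. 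Your closing perturbation argument correctly reduces degenerate isolated zeros to the nondegenerate case via homotopy invariance of the local degree on the boundary sphere, which is exactly what is needed to apply the intersection-theoretic form of Lefschetz stated in the paper.
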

\begin{rem}
This theorem can be generalized to manifolds with boundaries by requiring $v$ to point outward. 
Poincar\'e proved a two dimensional version of this theorem in 1885. The general version was proven by Hopf in 1926. 
\end{rem}
\begin{thm}[{\cite[Example 3.2.16, p. 61]{Fulton.Intersection}}] 
Let $E$ be a vector bundle of rank $r$ on a smooth variety $X$, let $s$ be a section of $E$, and $Z$ the zero-scheme of $s$. If $X$ is purely $n$-dimensional and $s$ is a regular section, then $Z$ is purely $(n-r)$-dimensional, and 
$$
[Z]=c_r(E)\cap [X].
$$
\end{thm}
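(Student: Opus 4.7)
The plan is to compute $[Z]$ by realizing $Z$ as a refined intersection inside the total space of $E$ and applying the self-intersection formula already invoked in the proof of Theorem \ref{Thm:PoincareHopfChern}.

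First, I would settle the dimension claim. Since $s$ is a regular section, in any local trivialization of $E$ the components of $s$ form an $\mathcal{O}_X$-regular sequence of length $r$, so $Z$ is a local complete intersection subscheme of pure codimension $r$, hence purely $(n-r)$-dimensional. Moreover, the conormal sheaf of $Z$ in $X$ is canonically isomorphic to $s^{*}E^{\vee}|_Z$, so $N_Z X \cong E|_Z$.

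Next, I would place the computation inside the total space $\pi\colon E \to X$. Write $s_0\colon X\hookrightarrow E$ for the zero section. Both $s$ and $s_0$ are sections of the smooth morphism $\pi$, hence are regular closed embeddings of codimension $r$. The zero scheme $Z$ is scheme-theoretically the fiber product $s^{-1}(s_0(X))$, and regularity of $s$ makes this a proper intersection. The refined Gysin pullback along the regular embedding $s_0$ therefore gives
\[
[Z] \;=\; s^{!}\,[s_0(X)] \;\in\; A_{n-r}(X).
\]
To evaluate this, I would invoke the homotopy invariance of Chow groups for vector bundles: the flat pullback $\pi^{*}\colon A_{*}(X) \to A_{*}(E)$ is an isomorphism. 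Functoriality of Gysin pullback applied to the relations $\pi\circ s = \pi\circ s_0 = \mathrm{id}_X$ yields $s^{!}\circ\pi^{*} = s_0^{!}\circ\pi^{*} = \mathrm{id}$, so the two Gysin maps agree on all of $A_{*}(E)$. Therefore
\[
[Z] \;=\; s^{!}\,[s_0(X)] \;=\; s_0^{!}\,[s_0(X)] \;=\; c_r(E)\cap [X],
\]
where the last equality is the self-intersection formula applied to $s_0$ together with the identification $N_{s_0(X)/E}\cong E$.

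The main obstacle will be justifying the equality $s^{!} = s_0^{!}$ rigorously; this rests on the fact that refined Gysin maps are compatible with flat pullback and that $A_*(E)$ is generated by $\pi^{*}A_*(X)$. If one prefers a more hands-on route, an alternative is to argue locally: trivialize $E$ over an affine open $U\subset X$ so that $s|_U = (s_1,\dots,s_r)$ is a regular sequence, use the Koszul resolution to compute $[Z\cap U]$ as the top Chern class of the trivial rank-$r$ bundle on $U$, and glue the local identities via the Whitney sum formula. Either route gives the desired equality $[Z] = c_r(E)\cap [X]$.
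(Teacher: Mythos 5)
The paper does not actually prove this statement: it is quoted verbatim from Fulton (Example 3.2.16) and used as a black box in the appendix, so there is no in-paper proof to compare against. Your argument is a correct, self-contained proof, and it is essentially the standard one that Fulton himself gives (zero-section Gysin map, homotopy invariance of $A_*$ for vector bundles, self-intersection formula). It also fits the paper's toolkit nicely, since the self-intersection formula you invoke at the end is precisely the theorem the authors state two results earlier in the same appendix, and the identification $N_{s_0(X)}E\cong E$ is what makes $c_r$ of the normal bundle into $c_r(E)$.

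Two points deserve to be made explicit. First, the step $[Z]=s^{!}[s_0(X)]$ is not purely a dimension count: properness of the intersection gives $s^{!}[s_0(X)]=\sum_j m_j[Z_j]$ with a priori unknown multiplicities, and you need $s_0(X)\cong X$ to be Cohen--Macaulay (it is, being smooth) together with Fulton's Proposition 7.1 to conclude that each $m_j$ equals the length of the local ring of the scheme-theoretic zero locus, i.e.\ that the refined class is the fundamental cycle $[Z]$ of the zero \emph{scheme}. You gesture at this with ``regularity of $s$ makes this a proper intersection,'' but the multiplicity-one assertion is the real content there. Second, note that the identity $s^{!}=s_0^{!}$ you derive from $s^{!}\pi^{*}=s_0^{!}\pi^{*}=\mathrm{id}$ holds for the Gysin maps valued in $A_*(X)$, not for the refined maps (which land in Chow groups of different supports); this is harmless because the theorem's conclusion is an identity in $A_{n-r}(X)$, but it is worth saying. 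The closing ``alternative local route'' via Koszul resolutions and gluing is too vague to stand on its own (Chern class identities do not glue from local trivializations in the naive way suggested); your main argument does not need it and should be presented as the proof.
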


In particular, if $E$ is the tangent bundle $TX$ of $X$, then $r$ (i.e. the rank of $E$) is the dimension of $X$, and the section $s$ of $E$ is just a vector field. The zero-scheme $Z$ is a $0$-cycle that is the sum of  the isolated singularities of $s$ counted with multiplicities. 
Hence, the degree of the  top Chern class of $TX$  gives  the index of the vector field $s$, which is  the Euler characteristic of $M$ by the Poincar\'e--Hopf theorem. Since the degree of $c(X)$ is exactly the degree of $c_{r}(TX)\cap [X]$,  we retrieve Theorem \ref{Thm:PoincareHopfChern}:
$$
\chi(X)=\int c(X).
$$

 \subsection{Hirzebruch--Riemann--Roch theorem and the Euler characteristic}
 In this sub-section, using the Hirzebruch--Riemann--Roch theorem and the Hodge decomposition theorem, we prove that the Euler characteristic of a nonsingular  projective variety is the degree of its homological total  Chern class.  
We follow  Fulton (\cite[Example 18.3.7, p. 362]{Fulton.Intersection} and  \cite[Example 3.2.5, p. 57]{Fulton.Intersection}) as presented by D. R\"ossler \cite{Rossler}.  
 We denote the Todd class, the Chern character, and the dual  of a vector bundle $E$ by $\mathrm{td}(E)$ and $\mathrm{ch}(E)$, and $E^\vee$ respectively.

Let $X$ be a projective variety of dimension $d$ and $V$ a coherent sheaf defined over $X$. 
We denote by  $H^q(X,V)$  the $q$-th cohomology group of $X$ with coefficients in the sheaf of germs of local sections of $V$.  
The cohomology groups $H^q(X,V)$ vanish for $q>d$ and are all finite dimensional for $0 \leq q\leq d$.
The Euler characteristic of $V$ in  $X$ is by definition the finite number
$$
\chi(X,V):=\sum_{q=0}^d (-1)^q \dim H^q(X, V).
$$ 
The Hirzebruch--Riemann--Roch theorem provides an expression for $\chi(X,V)$ in terms of characteristic classes of $TX$ and $V$ realizing a conjecture of Serre in a letter to Kodaira and Spencer.  
\begin{thm}[Hirzebruch--Riemann--Roch] Let $V$ be a coherent sheaf over a nonsingular variety $X$. Then 
$$\chi(X,V)=\int_X\mathrm{ch}(V)  \mathrm{td}(TX).$$
\end{thm}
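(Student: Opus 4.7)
The plan is to deduce the Hirzebruch--Riemann--Roch formula as the specialization of the Grothendieck--Riemann--Roch theorem (GRR) to the structural morphism $p:X\to\mathrm{Spec}\,\mathbb{C}$. GRR asserts that for a proper morphism $f:X\to Y$ between nonsingular varieties and any coherent sheaf $V$ on $X$,
$$
\mathrm{ch}(f_{!}V)\cdot\mathrm{td}(TY)=f_{*}\bigl(\mathrm{ch}(V)\cdot\mathrm{td}(TX)\bigr)\quad\text{in }A_{*}(Y)_{\mathbb{Q}},
$$
where $f_{!}V:=\sum_{i}(-1)^{i}R^{i}f_{*}V\in K_{0}(Y)$. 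When $Y$ is a point, $p_{!}V=\sum_{i}(-1)^{i}\dim H^{i}(X,V)=\chi(X,V)$, $\mathrm{td}(T_{\mathrm{pt}})=1$, the Chern character on $K_{0}(\mathrm{pt})=\mathbb{Z}$ is the identity, and $p_{*}$ is the degree map $\int_{X}$. Substituting these identifications into GRR yields $\chi(X,V)=\int_{X}\mathrm{ch}(V)\,\mathrm{td}(TX)$ at once, so the work is concentrated in establishing GRR.

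To prove GRR, I would first observe that both sides are additive on short exact sequences of coherent sheaves (using the long exact sequence in cohomology for the left-hand side), so they define group homomorphisms $K_{0}(X)\to A_{*}(Y)_{\mathbb{Q}}$. Next, I would factor any projective morphism $f$ as a closed immersion followed by a projection $\pi:\mathbb{P}^{N}\times Y\to Y$, and verify GRR in these two special cases. For the projection, the splitting principle reduces matters to direct sums of line bundles on a projective bundle, where the Euler sequence and the standard presentation of $A_{*}(\mathbb{P}^{N}\times Y)$ give both sides as explicit polynomials in the hyperplane class---the identity then becomes a calculation entirely analogous to the computation underlying Theorem \ref{Thm:PushH} above. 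For a closed immersion $i:Z\hookrightarrow X$, I would use deformation to the normal cone to reduce to the zero section of the normal bundle $N_{Z}X$, where the Koszul resolution of $\mathscr{O}_{Z}$ provides both an explicit representative for $i_{!}\mathscr{O}_{Z}$ and the needed characteristic-class identity via the self-intersection formula.

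The main obstacle is unquestionably the closed-immersion case of GRR, which rests on the Fulton--MacPherson deformation-to-the-normal-cone construction together with a subtle Koszul-complex calculation to identify $\mathrm{ch}(i_{!}\mathscr{O}_{Z})$. A more elementary route, sufficient for the paper's use of HRR, is Hirzebruch's original cobordism argument: additivity plus the multiplicative behaviour of both sides under products forces any such characteristic-class formula to be determined by its values on $\mathbb{P}^{n_{1}}\times\cdots\times\mathbb{P}^{n_{k}}$, where the computation can be performed directly via \v{C}ech cohomology of line bundles on projective space. In the context of this paper, HRR is applied only to $V=\mathscr{O}_{X}$ and combined with the Hodge decomposition, so quoting the classical result is the most economical option.
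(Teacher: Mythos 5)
The paper does not actually prove this statement: Hirzebruch--Riemann--Roch is quoted as a classical result (with pointers to Hirzebruch and to Borel--Serre), and the only argument supplied in Appendix \ref{appB.Euler} concerns its \emph{application}, namely combining HRR for $V=\Omega_X^q$ with Lemma \ref{Lem:Hirzebruch} and the Hodge decomposition to get $\chi(X)=\int c(X)$. Your proposal therefore goes beyond the paper by sketching an actual proof. The reduction of HRR to Grothendieck--Riemann--Roch for the structure morphism $p:X\to\mathrm{Spec}\,\mathbb{C}$ is correct and complete as stated ($p_!V=\chi(X,V)$, $\mathrm{td}(T_{\mathrm{pt}})=1$, $\mathrm{ch}$ the identity on $K_0(\mathrm{pt})=\mathbb{Z}$, $p_*=\int_X$), and your outline of GRR --- additivity in $K_0$, factorization through $\mathbb{P}^N\times Y$, the projective-bundle computation via the Euler sequence, and deformation to the normal cone plus the Koszul resolution for closed immersions --- is the standard Borel--Serre/Fulton route, with the genuinely hard step (the closed-immersion case) correctly identified rather than elided. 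Two small caveats: the theorem as stated needs $X$ complete (otherwise the cohomology groups need not be finite-dimensional and $\int_X$ is not defined), a hypothesis the paper leaves implicit and you should too when invoking GRR for a proper $p$; and the paper actually applies HRR to all the sheaves $\Omega_X^q$, $0\leq q\leq \dim X$, not only to $\mathscr{O}_X$, though this does not affect your argument. Your closing suggestion --- that for the purposes of this paper one should simply quote the classical theorem --- is exactly what the authors do.
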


We will also need the following  lemma relating the Todd class and the Chern character. This lemma is instrumental in the proof of the  Hirzebruch--Riemann--Roch theorem of Borel and Serre \cite[Lemma 18, p. 128]{BorelSerre}, and is also discussed by Fulton in  \cite[Example 3.2.5, p. 57]{Fulton.Intersection}. 
\begin{lem}[Hirzebruch, {\cite[Theorem 10.1.1, page 92]{Hirzebruch})}] \label{Lem:Hirzebruch}
Let $E$ be a vector bundle of rank $r$. Then 
$$
\mathrm{ch}\Big(\sum_{q=0}^r (-1)^q  {\bigwedge}^{q} E^\vee\Big) \mathrm{td}(E)=c_r(E).
$$
\end{lem}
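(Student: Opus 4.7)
The plan is to reduce to the case of a sum of line bundles via the splitting principle and then to reduce both sides to an equality of formal expressions in the Chern roots. Concretely, I would first recall that both $\mathrm{ch}$ and $\mathrm{td}$, as well as $c_r$, are stable under pullback along the associated flag bundle, and that such a pullback is injective on Chow rings. It therefore suffices to verify the identity under the assumption that $E$ splits as a direct sum of line bundles $L_1\oplus\cdots\oplus L_r$ with Chern roots $\alpha_i=c_1(L_i)$, so that $c_r(E)=\alpha_1\cdots\alpha_r$.

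Next I would expand each ingredient in terms of the $\alpha_i$. The dual bundle has Chern roots $-\alpha_i$, and for $\bigwedge^q E^\vee$ the Chern roots are the sums $-\alpha_{i_1}-\cdots-\alpha_{i_q}$ indexed by $1\le i_1<\cdots<i_q\le r$. Since the Chern character is additive on direct sums and multiplicative with $\mathrm{ch}(L)=e^{c_1(L)}$, one gets
\[
\mathrm{ch}\bigl({\textstyle\bigwedge}^q E^\vee\bigr)=\sum_{i_1<\cdots<i_q}e^{-\alpha_{i_1}-\cdots-\alpha_{i_q}}=e_q\bigl(e^{-\alpha_1},\ldots,e^{-\alpha_r}\bigr),
\]
where $e_q$ denotes the $q$th elementary symmetric polynomial. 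The alternating sum over $q$ then telescopes via the generating function $\sum_{q=0}^r (-1)^q e_q(x_1,\ldots,x_r)=\prod_{i=1}^r(1-x_i)$, giving
\[
\mathrm{ch}\!\left(\sum_{q=0}^{r}(-1)^q{\textstyle\bigwedge}^q E^\vee\right)=\prod_{i=1}^{r}\bigl(1-e^{-\alpha_i}\bigr).
\]

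The final step is to match this against $\mathrm{td}(E)$. By definition $\mathrm{td}(E)=\prod_{i=1}^{r}\frac{\alpha_i}{1-e^{-\alpha_i}}$, so multiplying the two factors immediately yields
\[
\mathrm{ch}\!\left(\sum_{q=0}^{r}(-1)^q{\textstyle\bigwedge}^q E^\vee\right)\mathrm{td}(E)=\prod_{i=1}^{r}\alpha_i=c_r(E),
\]
which is the desired identity. The denominators $1-e^{-\alpha_i}$ cancel formally in the Chow ring with rational coefficients because the power series $(1-e^{-t})/t$ has an invertible constant term, so the cancellation is legitimate.

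The only real subtlety is justifying that working with the Chern roots $\alpha_i$ as honest classes is legitimate in the general case. I would handle this by invoking the splitting principle in its standard form: there exists a morphism $p:F\to X$ (the full flag bundle of $E$) such that $p^*E$ splits as a direct sum of line bundles and $p^*\colon A^*(X)\to A^*(F)$ is injective. Since $\mathrm{ch}$, $\mathrm{td}$, and $c_r$ all commute with $p^*$, verifying the identity upstairs on $F$ suffices. Aside from this reduction, the argument is a purely formal manipulation of generating functions, so no serious obstacle is expected.
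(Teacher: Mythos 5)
Your proof is correct and follows essentially the same route as the paper's: both reduce to Chern roots via the splitting principle, identify $\mathrm{ch}\bigl(\bigwedge^{q}E^{\vee}\bigr)$ with the $q$th elementary symmetric polynomial in the $e^{-\alpha_i}$, telescope the alternating sum to $\prod_i(1-e^{-\alpha_i})$, and cancel against the Todd class. Your added remarks on the injectivity of the flag-bundle pullback and the invertibility of $(1-e^{-t})/t$ are just slightly more explicit versions of what the paper leaves implicit.
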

\begin{proof}   
By the splitting principal, we can always formally factorize the total Chern class of $E$ as  $c(E)=\prod_i (1+ a_i )$, where $a_i$ are the Chern roots of $E$. Then by definition
$$
\mathrm{ch}(E):=\sum_{i=1}^r e^{a_i}, \quad \mathrm{td}(E):=\prod_i \frac{a_i}{(1-e^{-a_i})}.$$
 We have the classical relations (see \cite[Theorem 4.4.3, page 64]{Hirzebruch} or \cite[Remark 3.2.3, p. 54--56]{Fulton.Intersection})
$$c(E^\vee)=\prod_i (1-a_i), \quad c({\bigwedge}^q E)= \prod_{1\leq i_1< \cdots < i_q\leq r} \Big(1+a_{i_1} +\cdots +a_{i_q}\Big)$$
Hence 
$$\quad \mathrm{ch}\Big({ \bigwedge}^q E^\vee\Big)= \sum_{1\leq i_1< \cdots < i_q\leq r} e^{-(a_{i_1} +\cdots +a_{i_q})}$$
Thus by the additive properties of the Chern character and the definition of the Todd class: 
\begin{align}
\mathrm{ch}\Big(\sum_{q=0}^r (-1)^q  {\bigwedge}^{q} E^\vee\Big)&= \sum_{q=0}^r (-1)^q  \mathrm{ch}\Big({\bigwedge}^q E^\vee\Big)=\prod_{i=1}^r (1- e^{-a_{i}})\nonumber\\
&=(a_1\ldots a_r) \prod_{i=1}^r \frac{(1- e^{-a_{i}})}{a_i}=c_r(E) \mathrm{td}^{-1}(E).\nonumber
\end{align}
\end{proof}

\begin{thm}
Let $X$ be  a nonsingular  complete projective variety defined over the complex numbers. Then the Euler characteristic 
$$
\chi(X)=\int c(X).
$$
\end{thm}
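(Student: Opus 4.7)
The plan is to combine the Hirzebruch--Riemann--Roch theorem with Hirzebruch's formula (Lemma \ref{Lem:Hirzebruch}), using the Hodge decomposition to rewrite the topological Euler characteristic as an alternating sum of sheaf Euler characteristics. Since $X$ is a smooth complex projective variety, it is K\"ahler, so Hodge theory supplies the decomposition $H^k(X,\mathbb{C}) = \bigoplus_{p+q=k} H^q(X,\Omega_X^p)$, where $\Omega_X^p = \bigwedge^p T^*X = \bigwedge^p (TX)^\vee$ is the sheaf of holomorphic $p$-forms.

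First I would use the Hodge decomposition to rewrite the topological Euler characteristic as a double alternating sum, and then regroup it by the degree $p$ of the form bundle:
\[
\chi(X) = \sum_{k=0}^{d} (-1)^k \dim H^k(X,\mathbb{C}) = \sum_{p,q=0}^{d} (-1)^{p+q} \dim H^q(X,\Omega_X^p) = \sum_{p=0}^{d} (-1)^p \chi(X, \Omega_X^p),
\]
where $d = \dim X$. This is the one step where the hypothesis that $X$ is projective (hence K\"ahler) enters in an essential way.

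Next I would apply Hirzebruch--Riemann--Roch to each $\chi(X,\Omega_X^p)$, invoke the additivity of the Chern character, and pull the alternating sum inside, yielding
\[
\chi(X) \;=\; \sum_{p=0}^{d} (-1)^p \int_X \mathrm{ch}\bigl(\textstyle\bigwedge^p (TX)^\vee\bigr)\,\mathrm{td}(TX) \;=\; \int_X \mathrm{ch}\!\left(\sum_{p=0}^{d} (-1)^p \bigwedge^p (TX)^\vee\right) \mathrm{td}(TX).
\]
At this point Hirzebruch's identity (Lemma \ref{Lem:Hirzebruch}), applied with $E = TX$ and $r = d$, collapses the integrand to $c_d(TX)$, giving $\chi(X) = \int_X c_d(TX) \cap [X]$. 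Since the operator $\int_X$ retains only the zero-dimensional component of a class and $c_d(TX)\cap [X]$ is precisely the zero-dimensional part of $c(TX)\cap [X] = c(X)$, the desired equality $\chi(X) = \int c(X)$ follows.

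The main obstacle is conceptual rather than computational: every ingredient is already in hand, so the proof reduces to assembling HRR, Hodge theory, and Hirzebruch's lemma in the correct order. The most delicate bookkeeping items are keeping track of the sign $(-1)^{p+q}$ versus the sign $(-1)^p$ after switching from the topological to the holomorphic Euler characteristic, and observing explicitly that the positive-codimension components of $c(X)$ are killed by $\int_X$, so that $\int_X c_d(TX) \cap [X]$ and $\int c(X)$ genuinely coincide.
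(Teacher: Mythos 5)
Your proposal is correct and is essentially the paper's own argument (the Hirzebruch--Riemann--Roch proof in Appendix \ref{appB.Euler}), just written in the reverse order: you start from $\chi(X)$ and collapse to $c_d(TX)$ via Lemma \ref{Lem:Hirzebruch}, whereas the paper starts from the lemma and expands out to $\chi(X)$ using HRR, additivity of the Chern character, and the Hodge/Dolbeault decomposition. The ingredients and the logical content are identical.
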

\begin{proof}
 For $X$ a nonsingular variety of dimension $d$, we apply  lemma \ref{Lem:Hirzebruch}
 to the tangent bundle  $E=TX$ and we note that  $E^\vee=TX^\vee:=\Omega_X$,  where $\Omega_X$ is the sheaf of differentials of $X$, and by definition, the sheaf of differential $p$-forms is $\bigwedge^q \Omega_X :=\Omega_X^q$. Hence,  we get
$$
\mathrm{ch}\Big(\sum_{q=0}^d (-1)^q  \Omega_X^q \Big) \mathrm{td}(TX)=c_r(TX).
$$
We rewrite the left hand side of the previous equation as follows
$$
\begin{aligned}
\int_X\mathrm{ch}\Big(\sum_{q=0}^d (-1)^q   \Omega_X^q\Big) \mathrm{td}(TX) &=
\sum_{q=0}^d (-1)^q \int_X\mathrm{ch} ( \Omega^q_X) \mathrm{td}(TX)\\
&=\sum_{q=0}^d (-1)^{q}\int_X \chi(X,  \Omega_X^q )\\
&=\sum_{q=0}^d \sum_{p=0}^d (-1)^{p+q}   \rm{dim}\   H^p (X, \Omega_X^q)\\
&=\sum_{k=0}^d (-1)^k \sum_{p+q=k}   \rm{dim}\   H^p (X, \Omega_X^q)\\
&=\sum_{k=0}^d (-1)^k b_k\\
&=\chi(X). 
\end{aligned}
$$
The first equality is a direct consequence of the additive property of the Chern character, the second equality is due to the Hirzebruch--Riemann-Roch theorem applied to $\Omega_X^q$, the third equality follows from the definition 
of the Euler characteristic of a sheaf, and the fifth equality is a direct application  of the Hodge decomposition theorem 
$
\Omega^k=\bigoplus_{p+q=k} \Omega^{p,q} 
$ 
and Dolbeault's theorem, which asserts that the Dolbeault cohomology is isomorphic to the sheaf cohomology of the sheaf of differential forms:
$
H^{p,q}(X)\cong  H^p (X, \Omega_X^q).
$ 
In particular, $h^{p,q}(X)=\rm{dim}\   H^p (X, \Omega_X^q)$ are the Hodge numbers of $X$. 
The last equality is by the definition of the Euler characteristic.  
Hence, since $\int c(X)=\int c(TX)\cap [X]=\int_X c_r (TX)$, we get 
$$
\int c(X)=\chi(X).
$$
 \end{proof}

\section{Basic Notions}
\label{Sec:BasicNotions}

The local ring of a subvariety  $S$ of $X$ is denoted $\mathscr{O}_{X,S}$, its maximal ideal is $\mathscr{M}_{X,S}$  and the quotient field is the residue field  $\kappa(S)=\mathscr{O}_{X,S}/\mathscr{M}_{X,S}$. 
The local ring $\mathscr{O}_{X,S}$ is the stalk of the structure sheaf of $X$ at the generic point  $\eta_S$ of $S$ and $\kappa(S)$ is the function field of $S$.
If $S$ is a divisor, $\mathscr{O}_{X,S}$ is a one dimensional local domain. In case $X$ is nonsingular along $S$, $\mathscr{O}_{X,S}$ is a discrete valuation ring and the order of vanishing  is given by the usual valuation.

\subsection{Fiber types, dual graphs, Kodaira symbols}
\label{Sec:FiberTypes}

\begin{defn}[Algebraic cycle]
An algebraic cycle of a Noetherian scheme $X$ is a finite formal sum  $\sum_i N_i V_i$  of  subvarieties $V_i$ with integer coefficients $N_i$. 
If all the subvarieties  $V_i$ have the same dimension $d$, the cycle is called a $d$-cycle. The free group generated by subvarieties of dimension $d$ is  denoted $Z_d(X)$.  The group of all cycles, denoted $Z(X)=\bigoplus_d Z_d(X)$, is the free group generated by  subvarieties of $X$. 
\end{defn}
\begin{defn}[Degree of a zero-cycle{\cite[Chapter 1, Definition 1.4, p. 13]{Fulton.Intersection}}]
\label{def:degree}
Let  $X$ be a complete scheme.   
The {\em degree} of a zero-cycle $\sum N_i p_i$ of $X$ is $$\mathrm{deg} (\sum_i N_i p_i)=\sum_i N_i [\kappa(p_i): k],$$
 where $[\kappa(p_i): k]$ is the degree of the field extension $\kappa(p_i)\rightarrow  k$. 
\end{defn}

Let $\Theta$ be an algebraic one-cycle with irreducible decomposition  $\Theta=\sum_i m_i \Theta_i$. 
 We denote by $\Theta_i \cdot \Theta_j $ the zero-cycle defined by the  intersection of $\Theta_i$ and $\Theta_j$ for $i\neq j$. 
 \begin{defn}[$n$-points, tree]
A {\em $n$-point }of an algebraic one-cycle $\Theta$ is  a point in $\bigcup_i \Theta_i$, which belongs to exactly $n$ distinct irreducible components  $\Theta_i$. An algebraic one-cycle  $\Theta$ is said to be a {\em tree} if it does not have $n$-points for $n>2$. Two curves intersect transversally if their intersection consists of isolated reduced closed points. 
\end{defn}

Following Kodaira \cite{Kodaira.IIandIII}, we introduce the following definition:

 \begin{defn}[Fiber type]
By the {\em type} of an algebraic one-cycle $\Theta\in Z_1(X)$  with irreducible decomposition $\Theta=\sum_i m_i \Theta_i$, we mean the isomorphism class of each irreducible curve $\Theta_i$, together with the topological structure of the reduced polyhedron $\sum \Theta_{i}$  (that is the collection of zero-cycles $\Theta_i\cdot \Theta_j$ ($i\neq j$)), and  the homology class of $\Theta=\sum_i m_i \Theta_{i}$ in the Chow group $A_1(X)$. 
\end{defn}
\begin{exmp}
For instance,  $\Theta_1\cdot \Theta_2= 2p_1 +3 p_2$ indicates that the two curves $\Theta_1$ and $\Theta_2$ meet at two points $p_1$ and $p_2$ with respective intersection multiplicity $2$ and $3$. 
\end{exmp}

\begin{defn}[Dual graph]
To an algebraic one-cycle $\Theta$ with irreducible decomposition  
$\Theta=\sum_i m_i \Theta_{i}$,  we associate a  weighted graph (called the {\em dual graph} of $\Theta$) such that: 
\begin{itemize}
\item 
 The vertices are the irreducible components of the fiber.
\item
 The weight of a vertex  corresponding to the irreducible component $\Theta_i$ is its multiplicity  $m_i$. 
 When the multiplicity is one, it can be omitted.
\item 
 The vertices corresponding to the irreducible components $\Theta_i$ and $\Theta_j$ ($i\neq j$) are connected by $\hat{\Theta}_{i,j}=\mathrm{deg}( \Theta_i\cdot \Theta_j)$ edges.
\end{itemize}
\end{defn}

\begin{defn}[Kodaira symbols, See   {\cite[Theorem 6.3]{Kodaira.IIandIII}}]\label{def.Kodaira.Fib} Kodaira has introduced the following symbols characterizing the type of one-cycles appearing in the study of minimal elliptic surfaces. 
See Table \ref{Table.KodairaTate} for a visualization of these fibers.
 \begin{enumerate}
\item Type I$_0$:  a smooth curve of genus 1.
\item Type I$_1$: an irreducible nodal  rational curve. 
\item Type II: an irreducible  cuspidal rational curve. 
\item Type I$_2$:  $\Theta=\Theta_1+\Theta_2$ and $\Theta_1\cdot \Theta_2=p_1+p_2$: two smooth rational curves  intersecting transversally at two distinct points $p_1$ and $p_2$. 
The dual graph of I$_2$ is $\widetilde{A}_1$. 
\item Type III: $\Theta=\Theta_1+\Theta_2$ and  $\Theta_1\cdot \Theta_2=2p$: two smooth rational curves  intersecting at a double point. Its dual graph is $\widetilde{A}_1$. 
\item Type IV: $\Theta=\Theta_1+\Theta_2+\Theta_3$ and $\Theta_1\cdot \Theta_2=\Theta_1\cdot \Theta_3=\Theta_2\cdot \Theta_3=p$:  a 3-star composed of smooth rational curves. Its dual graph is $\widetilde{A}_2$. 
\item Type I$_n$ $(n\geq 3)$:  
$\Theta=\Theta_0+\cdots \Theta_n$ with $\Theta_i\cdot \Theta_{i+1} =p_i$ $i=0,\cdots, {n-1}$ and $\Theta_n \cdot \Theta_0= p_n$.
 Its dual graph is the affine Dynkin diagram $\widetilde{A}_{n-1}$. 
\item Type I$^\ast_n$ $(n\geq 0)$: 
$\Theta=\Theta_0+\Theta_1   + 2 \Theta_{2}+\cdots +2\Theta_{n+2}+\Theta_{n+3}+\Theta_{n+4}$, 
with 
$\Theta_{i}\cdot \Theta_{i+1}=p_{i}$ $(i=1,\ldots, n+2)$,  $\Theta_0\cdot \Theta_2=p_0$,  $\Theta_{n+4}\cdot \Theta_{n+2}=p_{n+4}$.
The  dual graph the affine Dynkin diagram $\widetilde{D}_{4+n}$.
\item Type IV$^\ast$: 
$\Theta=\Theta_0+\Theta_1  +2 \Theta_2+2\Theta_3 +3\Theta_4 +2\Theta_5 +\Theta_6$ with 
$\Theta_{i}\cdot \Theta_{i+1}=p_i$ ($i=3,\ldots ,6$),  $\Theta_1\cdot \Theta_3=p_1$, $\Theta_0\cdot \Theta_2=p_0$, $\Theta_2\cdot \Theta_4=p_2$. 
The dual graph is the affine Dynkin diagram $\widetilde{E}_{6}$.

\item Type III$^\ast$: 
$\Theta=\Theta_0+2\Theta_1  +2 \Theta_2+3\Theta_3 +4\Theta_4 +3\Theta_5 +2\Theta_6 + \Theta_7$ with 
$\Theta_{i}\cdot \Theta_{i+1}=p_i$ ($i=3,\ldots ,6$),  $\Theta_1\cdot \Theta_3=p_1$, $\Theta_0\cdot \Theta_1=p_0$, $\Theta_2\cdot \Theta_4=p_2$.
The dual graph is the affine Dynkin diagram $\widetilde{E}_{7}$.

\item Type II$^\ast$: 
$\Theta=2\Theta_1  +3 \Theta_2+4\Theta_3 +6\Theta_4 +5\Theta_5 +4\Theta_6 + 3\Theta_7 + 2\Theta_8 + \Theta_0,$ with 
$\Theta_{i}\cdot \Theta_{i+1}=p_i$ ($i=3,\ldots ,7$),  $\Theta_1\cdot \Theta_3=p_1$, $\Theta_8\cdot \Theta_0=p_8$, and $\Theta_2\cdot \Theta_4=p_2$. 
The dual graph the affine Dynkin diagram $\widetilde{E}_{8}$.
\end{enumerate}

\end{defn}
\subsection{Elliptic fibrations, generic versus geometric fibers}
\label{Sec:GenericFiber}
\begin{defn}[Elliptic fibrations]
A surjective proper morphism $\varphi:Y\longrightarrow B$  between two algebraic varieties $Y$ and $B$ is called an elliptic fibration if the generic fiber of $\varphi$ is a smooth projective curve of genus one and $\varphi$ has a rational section. 
When $B$ is a curve, $Y$ is called an elliptic surface. When $B$ is a surface, $Y$ is said to be an elliptic threefold. In general,  if $B$ has dimension $n-1$, $Y$ is called an elliptic $n$-fold. 
\end{defn}
 The locus of singular fibers of $\varphi$ is called the discriminant locus of $\varphi$ and is denoted $\Delta(\varphi)$ or simply $\Delta$ when the context is clear. 
If the base $B$ is smooth, the discriminant locus is a divisor \cite{Dolgavcev.Purity}. 
The singular fibers of a minimal elliptic surface have been classified by Kodaira and N\'eron. 
The dual graphs of these geometric  fibers  are affine Dynkin diagrams. 
We denote these singular fibers by their Kodaira symbols as described in Definition \ref{def.Kodaira.Fib} and presented in Table \ref{Table.KodairaTate}.

The language of schemes streamlines many notions in the study of fibrations. We review some basic definitions.

 \begin{defn}[Fiber over a point]
 Let $\varphi: Y\longrightarrow B$ be a morphism of schemes. For any $p\in B$, the fiber over $p$ is denoted $Y_p$ and defined using a fibral product\footnote{Given three sets ($A_1$, $A_2$, and $S$) and two maps $\varphi_1:A_1\rightarrow B$ and  $\varphi_2:A_2\rightarrow B$, we define the fibral product $A_1\times_S A_2$ as the subset of $A_1\times A_2$ composed of couples $(a_1,a_2)$ such that 
$\varphi_1 (a_1)=\varphi_2(a_2)$.} as 
 $$Y_p=Y\times_B \rm{Spec}\ \kappa(p).$$ 
 \end{defn}
The first projection $Y_p\longrightarrow Y$ induces an homeomorphism from $Y_p$ onto $f^{-1}(p)$ \cite[\S 3.1 Proposition 1.16]{QLiu.AGAC} .
The second projection gives $Y_p$ the structure of a scheme over  the residue field $\kappa(p)$.

If $p$ is not a closed point\footnote{For example, if $p$ is the generic point of a subvariety of $B$.}, the residue field $\kappa(p)$ is not necessarily algebraically closed. 
Certain components of $Y_p$ could be $\kappa(p)$-irreducible  (i.e. irreducible when defined over $\kappa(p)$) while they become  reducible after an appropriate field extension. 
 An irreducible  scheme over a field $k$ is said to be {\em geometrically irreducible} when it stays  irreducible after any  field extension. 
The most refined description of the fiber $Y_p$ is always the one corresponding to the algebraic closure $\overline{\kappa(p)}$ of $\kappa(p)$. This motivates the following definition. 
\begin{defn}
The geometric fiber over $p$ is the fiber $Y_p\times_{\kappa(p)} \overline{\kappa(p)}$, the fiber $Y_p$ after the   base change induced by the field extension $\kappa(p)\to \overline{\kappa(p)}$ to the algebraic closure of $\kappa(p)$. 
\end{defn}
By construction, a geometric fiber is always composed of geometrically irreducible components. 

\begin{defn}
We say that the type of a fiber $Y_p$ is {\em geometric} if it does not change after a field extension.
\end{defn}
\begin{rem}
To emphasize the difference between the fiber $Y_p$ and its geometric fiber, we will refer to the fiber $Y_p$ (defined with respect to the residue field $\kappa(p)$) as the {\em arithmetic fiber}. 
\end{rem}

For an elliptic $n$-fold, the Kodaira fibers are also the  {\em geometric generic fibers} of the irreducible components of the reduced discriminant locus. 
 While the dual graph of a Kodaira fiber is an affine Dynkin diagram of type $\widetilde{A}_k$, $\widetilde{D}_{4+k}$, $\widetilde{E}_6$, $\widetilde{E}_7$, or $\widetilde{E}_8$, 
 the dual graph of the generic (arithmetic) fiber itself can also be a twisted Dynkin diagram of type $\widetilde{B}_{3+k}^t$, $\widetilde{C}_{2+k}^t$, $\widetilde{G}_2^t$, or $\widetilde{F}_4^t$. 
 This is reviewed in  Tables  \ref{Table:Affine}   and \ref{Table:DualGraph}. These dual graphs are not geometric in the sense that after an appropriate base change they become $\widetilde{D}_{4+n}$, $\widetilde{A}_{2+2k}$ or $\widetilde{A}_{1+2k}$, and $\widetilde{E}_6$ respectively.
 The Kodaira fibers of the following type never need a field extension:
$$\text{  I$_1$, II, III, III$^*$, and  II$^*.$}$$

  The remaining Kodaira fibers 
   (IV, I$_{n>1}$, I$_n^*$, and IV$^*$) 
 can come from fibers Y$_p$ whose types are not geometric and require at least a field extension of degree $2$ to describe a fiber with a  geometric type. 
 When the fiber $Y_p$ has a geometric type, the type of the fiber is said to be {\em split}.
 Otherwise, the type of $Y_p$ is said to be non-split. 
  When that is the case we mark the fiber with an  ``ns'' superscript: IV$^{\text{ns}}$, I$_n^{\text{ns}}$, I$_n^{*\text{ns}}$, $(n\geq 2)$ and IV$^{*\text{ns}}$. 
  When a field extension is not needed, the fibers are marked with an ``s'' superscript (``split''): IV$^{\text{s}}$, I$_n^{\text{s}}$, I$_n^{*\text{s}}$, $(n\geq 2)$ and IV$^{*\text{s}}$. 
  The fiber of type I$_0^*$ can be split, semi-split, or non-split if the Kodaira types require no field extension, a quadratic extension, or a cubic extension. 
  The corresponding dual graphs are respectively $\widetilde{D}_4$, $\widetilde{B}_3^t$, and $\widetilde{G}_2^t$.    

\subsection{Weierstrass models and Deligne's formulaire }
\label{sec:Wmodel}
 We follow the notation of  Deligne  \cite{Formulaire}. 
Let  $\mathscr{L}$ be a line bundle over  a quasi-projective variety  $B$.  We define the following projective bundle (of lines):
\begin{equation}
\pi: X_0=\mathbb{P}_B[\mathscr{O}_B\oplus \mathscr{L}^{\otimes 2}\oplus \mathscr{L}^{\otimes 3}]\longrightarrow B.
\end{equation} 
The relative projective coordinates of $X_0$ over $B$ are denoted $[z:x:y]$,  where $z$, $x$, and $y$ are defined respectively  by the natural injection of 
 $\mathscr{O}_B$,   $\mathscr{L}^{\otimes 2}$, and $\mathscr{L}^{\otimes 3}$ into $\mathscr{O}_B\oplus \mathscr{L}^{\otimes 2}\oplus \mathscr{L}^{\otimes 3}$. Hence, 
  $z$ is a section of $\mathscr{O}_{X_0}(1)$, $x$ is a section of $\mathscr{O}_{X_0}(1)\otimes \pi^\ast \mathscr{L}^{\otimes 2}$, and
$y$ is a section of  $\mathscr{O}_{X_0}(1)\otimes \pi^\ast \mathscr{L}^{\otimes 3}$.

\begin{defn}
 A  Weierstrass model is an elliptic fibration $\varphi: Y\to B$  cut out by the zero locus of  a section of the  
line bundle $\mathscr{O}(3)\otimes \pi^\ast \mathscr{L}^{\otimes 6}$ in $X_0$. 
\end{defn}
The most general Weierstrass equation is written in the notation of Tate as \cite{Formulaire} $F=0$ with 
\begin{equation}
F=y^2z+ a_1 xy z + a_3  yz^2 -(x^3+ a_2 x^2 z + a_4 x z^2 + a_6 z^3),
\end{equation} 
where $a_i$ is a section of $\pi^\ast \mathscr{L}^{\otimes i}$. 
The line bundle $\mathscr{L}$ is called the {\em fundamental line bundle} of the Weierstrass model $\varphi:Y\to B$  and can be defined directly from the elliptic fibration $Y$ as 
$\mathscr{L}=R^1 \varphi_\ast \mathscr{O}_Y$. 
Following Tate and Deligne, we introduce the following quantities \cite{Formulaire} 
\begin{align}
\begin{cases}
b_2 &= a_1^2 + 4 a_2\\
b_4 &= a_1 a_3 + 2 a_4\\
b_6 &= a_3^2 + 4 a_6\\
b_8 &= a_1^2 a_6 - a_1 a_3 a_4 + 4 a_2 a_6 + a_2 a_3^2 - a_4^2\\
c_4 &= b_2^2 - 24 b_4\\
c_6 &= -b_2^3 + 36 b_2 b_4 - 216 b_6\\
\Delta &= -b_2^2 b_8 - 8 b_4^3 - 27 b_6^2 + 9 b_2 b_4 b_6\\
j& = {c_4^3}/{\Delta}
\end{cases}
\end{align}
These quantities  satisfy the following two relations
\begin{align}
1728 \Delta=c_4^3-c_6^2, \quad 4b_8 = b_2 b_6 - b_4^2.
\end{align}
The  $b_i$ ($i=2,3,4,6)$ and $c_i$   ($i=4,6$) are  sections of $\pi^\ast \mathscr{L}^{\otimes i}$. 
The discriminant $\Delta$ is a section of $\pi^\ast \mathscr{L}^{\otimes 12}$. Geometrically, the discriminant $\Delta$ is the locus of points over which the elliptic fiber is singular.  
The $j$-invariant characterizes a smooth elliptic curve up to isomorphism. 
If we  complete the square in $y$ in the Weierstrass equation,  the equation becomes 
\begin{equation}
zy^2 =x^3 +\tfrac{1}{4}b_2 x^2 z + \tfrac{1}{2} b_4 x z^2+ \tfrac{1}{4} b_6 z^3.
\end{equation}
In addition, if we complete the cube in $x$ gives the short form of the Weierstrass equation, the equation becomes
\begin{equation}
zy^2 =x^3 -\tfrac{1}{48} c_4 x z^2 -\tfrac{1}{864} c_6 z^3.
\end{equation}

\subsection{Tate's algorithm}
 Let $R$ be a complete discrete valuation ring with valuation $v$, uniformizing parameter $s$, and perfect residue field $\kappa=R/(s)$. 
  We are interested in the case where $\kappa$ has characteristic zero. 
 We recall that a discrete valuation ring has only three ideals, the zero ideal, the ring itself, and the principal ideal $s R$. 
  We take the convention in which the ring itself is not a prime ideal.
 It follows that the scheme $\mathrm{Spec}(R)$ has only two points:  the generic point (defined by the zero ideal) and the closed point (defined by the principal ideal  $s R$).
 
 Let $E/R$ be an elliptic curve over $R$ with  Weierstrass equation
  $$
 y^2+a_1 xy + a_3 y=x^3+a_2 x^2+ a_4 x + a_6, \quad a_i \in R.
 $$
  The generic fiber is a regular elliptic curve.
 After a resolution of singularities, we have a regular model $\mathscr{E}$ over $R$ and the {\em special fiber} is  the fiber over the closed point $\mathrm{Spec}\ R/ (s)$.
  Tate's algorithm determines the type of the geometric fiber over the closed point of $\mathrm{Spec} (R)$ by manipulating the valuations of the coefficients and the discriminant, and the arithmetic properties of some auxiliary polynomials. The type of the {\em geometric fiber} is denoted  by its  Kodaira's symbol (see Definition \ref{def.Kodaira.Fib}). 
The special fiber becomes geometric after  a quadratic or a cubic field extension $\kappa'/\kappa$. Keeping track of the field extension used gives a classification of  the special fiber as a $\kappa$-scheme---this is what we call the arithmetic fiber. 
The information on the required field extension needed to have geometrically irreducible components is already carefully encoded in Tate's original algorithm, as it is needed to compute 
  the local index (denoted by $c$ in Tate's notation). 
  In the language of N\'eron's model, the local index $c$  is the order of the component group;   
 geometrically, the local index is the number of reduced components of the special fiber  defined over $\kappa$. 
.  Following Tate, we use the convenient notation $$a_{i,j}= a_i s^{-j}.$$

Tate's algorithm consists of the following eleven steps (see \cite{Tate},  \cite[\S IV.9]{MR1312368}, \cite{Papadopoulos}, \cite{MR3085154}, \cite{Esole.Elliptic}). 
 For Step 7, we use the more refined description of Papadopoulos \cite[Part III, page 134]{Papadopoulos} who also gives in  \cite[\S 1, page 122]{Papadopoulos} an exhaustive list of errata of Tate's original paper \cite{Tate}. 
 Tate's algorithm is discussed in F-theory in \cite{Bershadsky:1996nh,Katz:2011qp}. 
 Subtleties in Step 6 and the distinction between two G$_2$-models depending on $[\kappa':\kappa]$  are explained in \cite{G2}.  
 We follow the presentation of \cite{Esole.Elliptic}:
\begin{enumerate}[label=Step \arabic*.]
\item $v(\Delta)=0 \implies$ I$_0$.
\item If $v(\Delta)\geq 1$, change coordinates so that $v(a_3)\geq 1$, $v(a_4)\geq 1$, and $v(a_6)\geq 1$.\\
\noindent If $v(b_2)=0$, the  type is I$_{v(\Delta)}$.  To have  a fiber with geometric irreducible components,  it is enough to work in the splitting field $\kappa'$ of the following polynomial of $\kappa[T]$:
$$
T^2+a_1 T-a_2.
$$
The discriminant of this quadric is $b_2$. If $b_2$ is a square in $\kappa$, then $\kappa'=\kappa$, otherwise $\kappa'\neq \kappa$:\\
(a)   $\kappa'=\kappa \implies$ I$_n^\text{s}$
\quad  (b) $\kappa'\neq \kappa\implies$ I$_n^{\text{ns}}$

\item $v(b_2)\geq 1$, $v(a_3)\geq 1$, $v(a_4)\geq 1$,  and $v(a_6)= 1$ $\implies$  II. 

\item $v(b_2)\geq 1$, $v(a_3)\geq 1$, $v(a_4)= 1$,  and $v(a_6)\geq 2$ $ \implies$ III.

\item $v(b_2)\geq 1$, $v(a_3)\geq 1$, $v(a_4)\geq 2$,  $v(a_6)\geq 2$, and $v(b_6)=2$ $ \implies$ IV.   \\ 
The  fiber has geometric irreducible components over the splitting field $\kappa'$ of the polynomial 
$$
T^2+a_{3,1} T-a_{6,2}
$$
Its discriminant is $b_{6,2}$. If $b_{6,2}$ is a square in $\kappa$, then $\kappa'=\kappa$ otherwise $\kappa'\neq \kappa$. \\ 
(a)   $\kappa'=\kappa \implies$ IV$^\text{s}$
\quad  (b) $\kappa'\neq \kappa\implies$ IV$^{\text{ns}}$
\item 

$v(b_2)\geq 1$, $v(a_3)\geq 1$, $v(a_4)\geq 2$, $v(a_6)\geq 3$, $v(b_6)\geq 3$, $v(b_8)\geq 3$. Then  make a change of coordinates such that $v(a_1)\geq 1$, 
$v(a_2)\geq 1$,  $v(a_3)\geq 2$,  $v(a_4)\geq 2$, and $v(a_6)\geq 3$. 
Let 
$$
P(T)= T^3+ a_{2,1} T^2 + a_{4,2} T + a_{6,3}
$$
If $P(T)$ is a separable polynomial in $\kappa$, that is if $P(T)$ has three distinct roots in a field extension of $\kappa$,  then the type is I$_0^\ast$. 
The geometric fiber is defined over the splitting field $\kappa'$ of $P(T)$ in $\kappa$. The type of the special fiber before to go to the splitting field depends on the degree of the field extension $\kappa'\to \kappa$:
\begin{itemize}
 \item $[\kappa':\kappa]=6\   \text{or}\    3\implies$ I$_0^{\ast \text{ns}}$ with dual graph $\widetilde{\text{G}}_2^t$. 
\item   $[\kappa':\kappa]=2\quad \quad\implies$ I$_0^{\ast \text{ss}}$ with dual graph $\widetilde{\text{B}}_3^t$. 
\item    $[\kappa':\kappa]=1\quad \quad\implies$ I$_0^{\ast \text{s}}$ with dual graph $\widetilde{\text{D}}_4$. 
\end{itemize}
 where ``ns'', ``ss'', and ``s'' stand respectively for ``non-split'', ``semi-split'', and ``split''. 
In the notation of Liu, these fibers are respectively  I$^\ast_{0,3}$,  I$^\ast_{0,2}$, and I$^\ast_{0}$.
{ The Galois group is the symmetric group $S_3$, the cyclic group $\mathbb{Z}/3\mathbb{Z}$, the cyclic group  $\mathbb{Z}/2\mathbb{Z}$ or the identity when 
the degree is respectively $6$, $3$, $2$, and $1$. }
\item 
 If $P(T)$ has a  double root, then the type is I$^\ast_n$ with $n\geq 1$. 
Make a change of coordinates such that the double root is at the origin. Then 
$v(a_1)\geq 1$, \quad
$v(a_2)= 1$,  $v(a_3)\geq 2$,\quad  $v(a_4)\geq 3$, ,\quad$v(a_6)\geq 4$, and $v(\Delta)=n+6$ $(n\geq 1)$. 
We now assume that, except for their valuations, the Weierstrass coefficients are generic. We then  
distinguish between even and odd values of $n$. 
\begin{enumerate}
\item If $n=2\ell-3$ ($\ell \geq 2$), then $v(a_1)\geq 1$, $v(a_2)= 1$,  $v(a_3)\geq \ell$,\quad  $v(a_4)\geq \ell+1$,\quad$v(a_6)\geq 2\ell$,  $v(b_6)=2\ell$, $v(b_8)=2\ell+1$, and 
$$T^2+a_{3,\ell} T -a_{6,2\ell}$$ has two distinct roots in its splitting field $\kappa'$. 
If the two roots are rational ($[\kappa':\kappa]=1$) then we have I$_{2\ell-3}^{*s}$ with dual graph $\widetilde{\text{D}}_{2\ell+1}$, otherwise 
 ($[\kappa':\kappa]=2$) we have the fiber type 
I$_{2\ell-3}^{* ns}$ with dual graph $\widetilde{\text{B}}^t_{2\ell}$. 
\item 
If $n=2\ell-2$ ($\ell\geq 2$) then, $v(a_1)\geq 1$, \quad
$v(a_2)= 1$,  $v(a_3)\geq \ell+1$,\quad  $v(a_4)\geq \ell+1$,\quad$v(a_6)\geq 2\ell+1$, and $v(b_8)=2\ell+2$. 
The polynomial 
$$a_{2,1} T^2+a_{4,\ell+1} T -a_{6,2\ell+1}$$ has two distinct roots in its splitting field. 
If the two roots are rational then we have I$_{2\ell-2}^{*s}$ with dual graph $\widetilde{\text{D}}_{2\ell+2}$, otherwise I$_{2\ell-2}^{* ns}$ with dual graph $\widetilde{\text{B}}^t_{2\ell+1}$. 
\end{enumerate}

\item If $P(T)$ has a triple root, change coordinates such that the triple root is zero. Then  
$v(a_1)\geq 1$, 
$v(a_2)\geq 2$,  $v(a_3)\geq 2$,  $v(a_4)\geq 3$, $v(a_6)\geq 4$.  

Let $$Q(T)=T^2+a_{3,2} T -a_{6,4}$$

 If $Q$ has two distinct roots ($v(b_6)=4$ or equivalently  $v(\Delta)=8$) the type is IV$^\ast$. 
 
 The split type depends on the rationality of the roots. If $b_{6,4}$ is a perfect square modulo $s$, the fiber is IV$^{\ast \text{s}}$ with dual graph $\widetilde{\text{E}}_6$, otherwise the fiber is IV$^{\ast \text{ns}}$ with dual graph 
 $\widetilde{\text{F}}_4^t$. 
   The split form can be enforced with $v(a_6)\geq 5$ and $v(a_3)=2$. 
    
\item If $Q$ has a double root, we change coordinates so that the double root is at the origin. 
Then:\\
$ v(a_1)\geq 1$,\quad  $v(a_2)\geq 2$,\quad $v(a_3)\geq 3$,\quad $v(a_4)=3$,\quad $v(a_6)\geq 5\implies $  type  III$^\ast$.
\item $v(a_1)\geq 1$,\quad  $v(a_2)\geq 2$,\quad $v(a_3)\geq 3$,\quad $v(a_4)\geq 4$,\quad $v(a_6)= 5\implies$  type II$^\ast$.
\item Else $v(a_i)\geq i$ and the equation is not minimal. Divide all the $a_i$ by $s^i$ and start again with the new equation. 
\end{enumerate}


\end{document}